\def\@textbottom{\vskip \z@ \@plus 7pt}
\let\@texttop\relax
\theoremstyle{plain}
\newtheorem{theo}{Theorem}[section]
\newtheorem{prop}[theo]{Proposition}
\newtheorem{lemm}[theo]{Lemma}
\newtheorem{coro}[theo]{Corollary}
\theoremstyle{definition}
\newtheorem{defn}{Definition}[section]
\theoremstyle{remark}
\newtheorem*{rema}{Remark}
\crefname{equation}{equation}{equations}
\crefname{table}{Table}{Tables}
\crefname{theo}{Theorem}{Theorems}
\crefname{prop}{Proposition}{Propositions}
\crefname{defn}{Definition}{Definitions}
\crefname{coro}{Corollary}{Corollaries}
\crefname{lemm}{Lemma}{Lemmata}
\newcommand{\Sym}{\mathfrak{S}}
\newcommand{\Alt}{\mathfrak{A}}
\newcommand{\CC}{\mathbb{C}}
\newcommand{\PP}{\mathbb{P}}
\newcommand{\QQ}{\mathbb{Q}}
\newcommand{\ZZ}{\mathbb{Z}}
\newcommand{\Id}{\mathrm{Id}}
\newcommand{\Cyc}{\mathrm{C}}
\newcommand{\Dih}{\mathrm{D}}
\newcommand{\GA}{\operatorname{Aff}(\mathbb{F}_5)}
\DeclareMathOperator{\Mon}{Mon}
\DeclareMathOperator{\Aut}{Aut}
\DeclareMathOperator{\Irr}{Irr}
\DeclareMathOperator{\Ind}{Ind}
\DeclareMathOperator{\End}{End}
\DeclareMathOperator{\Altp}{Alt}
\DeclareMathOperator{\ima}{im}
\DeclareMathOperator{\N}{N}
\DeclareMathOperator{\J}{J}
\DeclareMathOperator{\mult}{mult}
\DeclareMathOperator{\lcm}{lcm}
\DeclareMathOperator{\type}{type}
\DeclareMathOperator{\Stab}{Stab}
\DeclareMathOperator{\Prym}{P}
\DeclareMathOperator{\fg}{\pi_1}
\DeclarePairedDelimiter{\paren}{\lparen}{\rparen}
\DeclarePairedDelimiter{\ord}{\lvert}{\rvert}
\DeclarePairedDelimiter{\angp}{\langle}{\rangle}
\title[Galois closure of a 5-fold covering]{Galois closure of a 5-fold covering\\ and decomposition of its Jacobian}
\author{Benjamín M.\@ Moraga Baeza}
\email{\href{mailto:benjamin.baeza@ufrontera.cl}{benjamin.baeza@ufrontera.cl}}
\address{Programa de Doctorado en Ciencias menci\'on Matem\'atica, Universidad de La Frontera, Temuco, Chile}
\thanks{Partially supported by ANID Fondecyt grants 1190991.}
\subjclass[2020]{Primary 14H30, Secondary 14H40} \keywords{Coverings of curves, Jacobians, Prym varieties}
\begin{document}
	\begin{abstract}
		For an arbitrary 5-fold ramified covering between compact Riemann surfaces, every possible Galois closure is determined in terms of the ramification data of the map; namely, the ramification divisor of the covering map. Since the group that acts on the Galois closure also acts on the Jacobian variety of the covering surface, we describe its group algebra decomposition in terms of the Jacobian and Prym varieties of the intermediate coverings of the Galois closure. The dimension and induced polarization of each abelian variety in the decomposition is computed in terms of the ramification data of the covering map.
	\end{abstract}

	\maketitle


	\section{Introduction}
    
    In the moduli space \(\mathcal A_g\) of isomorphism classes of principally polarized abelian varieties of dimension \(g\), the subspace of isomorphism classes of Jacobian varieties deserve special attention. Due to Torelli's theorem, to each curve \(X\) of genus \(g\) we can biunivocally associate a principally polarized abelian variety \(\J(X)\) of dimension \(g\); in this manner, we can think the moduli space \(\mathcal{M}_g\) of the Riemann surfaces of genus \(g\) as a special subvariety of \(\mathcal A_g\), the locus of the Jacobian varieties. This particular kind of principally polarized abelian varieties were the first ones to be studied and they are still the best known among the latter ones due to the information we can naturally get from their associated curve.
        
    Poincare's complete reducibility theorem (see \cite{book:Lange}) states that each abelian variety is isogenous to a product of simple abelian subvarieties, those subvarieties are uniquely determined up to isogeny; however there is no easy manner to make that decomposition explicit. When we have a group action on an abelian variety, we have its isotypical decomposition, which is usually coarser than the given by Poincare's theorem, into \(G\)-invariant abelian sub-varieties and the, slightly finer, group algebra decomposition. Moreover, in a Jacobian variety \(\J(X)\), when that action is inherited by an action on \(X\), the pieces of the group algebra decomposition may, in many cases, be described as Prym varieties of intermediate coverings of the Galois covering induced by the group action on \(X\). The first example of this phenomenon is the Recillas trigonal construction (see \cite{paper:recillasTri}) that shows that the Jacobian of a tetragonal curve is isomorphic to the Prym variety of a double cover of a trigonal curve. Later, in \cite{paper:RR98} and \cite{paper:RR03}, Recillas and Rodriguez generalized those results for curve coverings of degree 3 and 4. They also analyze the polarization types of the isogenies involved. In this article we study analogous phenomena for degree~5 coverings in the light of the more general results of \cite{paper:CR2006}. Actions of \(\Alt_5\) and \(\Sym_5\) on Jacobian varieties are also studied on \cite{paper:Sanchez,paper:LangeRecillas04}; those are particular cases of the Galois closure of a degree~5 covering.

	Throughout this article, let \(f\colon X\to Y\) denote a ramified covering of degree~\(5\) between compact Riemann surfaces with branch locus \(B\) and monodromy representation \(\rho\colon \fg(Y-B) \to \Sym_5\). Although \(f\) is not necessarily Galois (that is, the quotient of \(X\) by a group action), its Galois closure \(\hat f\colon \hat X\to Y\) is. This action induces another one on the Jacobian variety \(\J(\hat{X})\); hence, we can decompose \(\J(\hat{X})\) into smaller abelian varieties through the group algebra decomposition (see \cite{paper:CR2006}). Since the automorphism group \(\Aut(\hat f)\) is naturally isomorphic to the monodromy group \(\Mon(f)\), the geometric signature (see \cite{thesis:AnitaRojas}) of \(\hat f\) is determined by the ramification data of \(f\); hence, the group algebra decomposition of \(\J(\hat X)\) depends on the ramification data of \(f\).

    There are two main results in this article: in \cref{theo:ClassificationG0,theo:ClassificationGP}, we enumerate each possible monodromy group \(\Mon(f)\), up to conjugacy in \(\Sym_5\), in terms of the ramification data of \(f\); we also give explicit generating vectors for each possible case. Then, and as an application, in \cref{theo:CaseC5,theo:CaseD5,theo:CaseGA,theo:CaseA5,theo:CaseS5}, we give the group algebra decomposition of \(\J(\hat X)\) for each possible \(\Mon(f)\). 

	\section{Preliminaries} \label{sec:Preliminaries}

	\subsection{Galois closure and its intermediate coverings} \label{subsec:GaloisClosure}

	The Galois closure \(\hat f\colon \hat X \to Y\) is the minimal Galois covering that factors through \(f\); the action of \(\fg\paren{Y-B}\) on the universal covering of \(Y-B\) induces a natural isomorphism \(\Aut(\hat f) \cong \Mon(f)\), so we refer to elements of \(\Aut(\hat f)\) and \(\Mon(f)\) interchangeably (see \cite{book:Girondo}*{section~2.8}); also, for simplicity, we set \(G \coloneqq \Mon(f)\) and denote \(g \coloneqq g_Y\) throughout this article. For each subgroup \(H\) of \(G\), there is a quotient map \(\pi_H\colon \hat X \to \hat X/H\) and an induced holomorphic map \(\pi^H\colon \hat X/H \to Y\), called \emph{intermediate coverings} of \(\hat f\), such that \(\hat f = \pi^H \circ \pi_H\).
	Moreover, we have \(\Aut(\pi^H) \cong \N_G(H)/H\) and \(X \cong \hat X/\Stab_G(1)\). Conversely, each intermediate covering of \(\hat f\) is induced by the action of a subgroup of \(G\). The following result summarize some direct consequences of that correspondence.

	\begin{prop}\label{prop:IntermediateOfGalois}
\hfill
		\begin{enumerate}
			\item The map \(\pi^H\) has a non-trivial intermediate covering if and only if there is a subgroup \(K\) of \(G\) such that \(H \subsetneq K \subsetneq G\).

			\item The map \(\pi^H\) is Galois if and only if \(H\) is normal in \(G\). If \(\pi^H\) is Galois, then \(\Aut(\pi^H) \cong G/H\).
		\end{enumerate}
	\end{prop}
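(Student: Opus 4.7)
The plan is to leverage the correspondence, recalled just before the proposition, between subgroups of \(G\) and intermediate coverings of \(\hat f\): each subgroup \(K\le G\) produces the intermediate covering \(\pi^K\colon \hat X/K\to Y\), and every intermediate covering of \(\hat f\) arises in this way. This correspondence is inclusion-reversing on the side of the covering surfaces.

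For part~(1), I would first observe that intermediate coverings of \(\pi^H\) are precisely those intermediate coverings of \(\hat f\) that factor as \(\hat X/H \to Z \to Y\). By the correspondence, such a surface \(Z\) must be of the form \(\hat X/K\) for some subgroup \(K\le G\), and the existence of the map \(\hat X/H\to\hat X/K\) forces \(H\le K\). The intermediate covering is non-trivial exactly when \(Z\) is neither \(\hat X/H\) nor \(Y = \hat X/G\), which translates to the strict inclusions \(H\subsetneq K\subsetneq G\).

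For part~(2), the key tool is the identification \(\Aut(\pi^H)\cong \N_G(H)/H\) recalled in the preamble. Since \(\pi^H\) has degree \([G:H]\), and any covering of degree \(n\) is Galois if and only if its deck transformation group has order \(n\), the covering \(\pi^H\) is Galois if and only if \(\ord{\N_G(H)/H} = [G:H]\); equivalently, \(\N_G(H)=G\), that is, \(H\triangleleft G\). Under this assumption, the formula \(\Aut(\pi^H)\cong \N_G(H)/H\) specialises to \(\Aut(\pi^H)\cong G/H\), as claimed.

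There is no real obstacle here beyond carefully unpacking the Galois correspondence recalled just before the statement; the only subtlety worth flagging explicitly is the justification that an intermediate covering \(\hat X/H\to Z\to Y\) is forced to have \(Z\) of the form \(\hat X/K\), which follows from the fact that \(\hat X/H\to Z\) itself must be an intermediate covering of the Galois covering \(\hat X\to Y\) (precomposing with \(\hat X\to\hat X/H\)).
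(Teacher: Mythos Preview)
Your proposal is correct and follows exactly the route the paper intends: the paper does not give an explicit proof of this proposition, stating only that it ``summarize[s] some direct consequences of that correspondence'' (the Galois correspondence and the formula \(\Aut(\pi^H)\cong \N_G(H)/H\) recalled just above). Your write-up simply unpacks those direct consequences, so there is nothing to add.
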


	For subgroups \(H\) and \(N\) of \(G\) such that \(H\subseteq N\), we define the induced map \(\pi_N^H\colon \hat{X}/H \to \hat{X}/N\) as the intermediate covering of \(\pi_N\) induced by the action of \(H\).

	\begin{defn}\label{defn:Type}
		For a branch value \(y\in B\) with \(f^{-1}(y) = \{x_1,\ldots, x_k\}\), we define the \emph{type} of \(y\) as \([\mult_{x_1}(f), \ldots, \mult_{x_k}(f)]\).
	\end{defn}
	The type of a branch value equals the cycle structure of \(\rho(\tau_y)\), where \(\tau_y\) is a small loop around \(y\); so we say that \(y\) is \emph{even} or \emph{odd} according to the parity of \(\rho(\tau_y)\).

	\begin{defn}
		The \emph{ramification data} of \(f\) is its set of branch values and their respective types, say \(t_i\). We denote it by \((t_1,\ldots,t_n)\).
	\end{defn}

	Denote \(B = \{y_1,\ldots,y_n\}\), the ramification data \((t_1,\ldots,t_n)\) of \(f\) is related with the signature \((g; m_1,\ldots, m_n)\) and the geometric signature  \((g;C_1,\ldots,C_n)\) of \(\hat f\) as follows (see \cite{thesis:AnitaRojas}):
	\begin{enumerate}
		\item Each \(C_i\) is the \(G\)-conjugacy class of the subgroup generated by a permutation of cycle structure \(t_i\); namely \(\rho(\tau_y)\).

		\item If \(t_i = [\nu_{i,1},\ldots,\nu_{i,k_i}]\), then \(m_i = \lcm(\nu_{i,1},\ldots,\nu_{i,k_i})\).
	\end{enumerate}
    Hence, the following result follows from the Riemann's existence theorem (see \cite{paper:Broughton}).

	\begin{prop}\label{prop:ExistenceOfHolomorphicMaps}
        The following statements are equivalent:
		\begin{enumerate}
			\item\label{stat:ETHolomorphicMap} There is a \(d\)-fold covering map \(f\colon X\to Y\) with branch locus \(\{y_1, \ldots, y_n\}\) such that \(\Mon(f) = G\) and \(\type(y_i) = [\nu_{i, 1}, \ldots, \nu_{i, {k_i}}]\) for each \(i\in\{1,\ldots,n\}\).

			\item\label{stat:ETGeneratingVector} The group \(G\) has a generating \((g; m_1, \ldots, m_n)\)-vector \((a_1, \ldots, a_{g}, b_1,\ldots,\allowbreak b_{g}, c_1, \ldots, c_n)\) such that each \(c_i\) has cycle structure \([\nu_{i, 1}, \ldots, \nu_{i, {k_i}}]\).
		\end{enumerate}
	\end{prop}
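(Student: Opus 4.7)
The plan is to derive this proposition as a direct application of Riemann's existence theorem, using the standard presentation of the fundamental group of a punctured surface to translate between monodromy representations and generating vectors.

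First, I would fix the standard presentation of \(\fg(Y - B, y_0)\) with generators \(\alpha_1, \beta_1, \ldots, \alpha_g, \beta_g, \gamma_1, \ldots, \gamma_n\), where each \(\gamma_j\) is a small simple loop around \(y_j\), subject to the single relation \(\prod_{i=1}^g [\alpha_i, \beta_i] \cdot \gamma_1 \cdots \gamma_n = 1\). Riemann's existence theorem (in the form quoted in \cite{paper:Broughton}) states that isomorphism classes of connected \(d\)-fold coverings of \(Y\) with branch locus contained in \(B\) are in bijection with conjugacy classes of transitive homomorphisms \(\rho\colon \fg(Y - B) \to \Sym_d\), and that the cycle structure of \(\rho(\gamma_j)\) coincides with the ramification type over \(y_j\) (as also recorded after \cref{defn:Type}).

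To prove \cref{stat:ETHolomorphicMap}\(\Rightarrow\)\cref{stat:ETGeneratingVector}, given \(f\) with \(\Mon(f) = G\), take \(a_i \coloneqq \rho(\alpha_i)\), \(b_i \coloneqq \rho(\beta_i)\) and \(c_j \coloneqq \rho(\gamma_j)\); the defining relation of \(\fg(Y - B)\) forces \(\prod [a_i, b_i] \prod c_j = 1\), the generation condition follows because \(\ima \rho = G\), and the cycle structure of \(c_j\) is the prescribed \([\nu_{j,1}, \ldots, \nu_{j,k_j}]\), which by item~(2) of the paragraph preceding the statement has order \(m_j\). Conversely, a generating \((g; m_1, \ldots, m_n)\)-vector with the prescribed cycle structures defines, via the presentation, a surjective homomorphism \(\rho\colon \fg(Y - B) \to G\); since \(G\) acts transitively on \(\{1, \ldots, d\}\) by virtue of being the monodromy group of some \(d\)-fold cover (equivalently, we take its standard action on cosets of a point stabiliser), the theorem yields a \(d\)-fold cover \(f\) with \(\Mon(f) = G\) and the required ramification type at each \(y_j\).

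The argument is essentially bookkeeping and the only subtle point I would be careful about is the transitivity hypothesis implicit on \(G\), which is needed to invoke Riemann's existence theorem and to ensure the resulting cover is connected; once \(G \leq \Sym_d\) is transitive, the translation between \(\rho\) and the generating vector is purely algebraic. Hence there is no genuine obstacle, and the proposition can be stated simply as a specialisation of Riemann's existence theorem to the present setting.
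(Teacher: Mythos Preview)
Your proposal is correct and follows exactly the approach the paper indicates: the paper does not give a proof at all but simply states that the result ``follows from the Riemann's existence theorem (see \cite{paper:Broughton})'', and your argument spells out precisely that derivation via the standard presentation of \(\fg(Y-B)\). Your caution about the implicit transitivity of \(G\) is well placed, since the statement tacitly assumes \(G\leq\Sym_d\) is transitive (it is declared to equal \(\Mon(f)\)); otherwise the argument is straightforward bookkeeping, as you say.
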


	\begin{rema}
		Recall that a generating \((g; m_1, \ldots, m_n)\)-vector of a group \(G\) is a tuple \((a_1, b_1, \ldots, a_{g}, b_{g}, \allowbreak c_1, \ldots, c_n)\) of generators of \(G\) such that
		\begin{equation}\label{eq:genVector}
			\prod_{i=1}^{g}[a_i,b_i] \prod_{i=1}^{n} c_i = \Id.
		\end{equation}
	\end{rema}



    The genus \(g_{\hat X/H}\) and the ramification data of \(\pi^H\) and \(\pi_H\) can be computed from the geometric signature of \(\hat f\) as done in \cite{thesis:AnitaRojas}*{chapter~3}; these computations were implemented into a Sage class (see \cite{software:SageMath}) called \texttt{GaloisCovering} using mostly GAP functions (see \cite{software:GAP}); its source code can be found at \url{https://bit.ly/3wDmLiL}. There is also a previous (although non object-oriented) implementation through GAP, which can be found in \cite{thesis:AnitaRojas}*{appendix~A}.

	\subsection{Group representations}

	For a finite group \(G\) and a complex representation \(V\) of \(G\), we denote its character by \(\chi_V\) and its Schur index by \(m_V\).
    For simplicity, we refer to a rational representation \(W\) and the complex representation \(W\otimes \CC\) interchangeably.

	\subsubsection{Group algebra decomposition}

	If \(\Irr_\QQ(G) = \{W_1, \ldots, W_r\}\), there is a unique decomposition
	\begin{equation}\label{eq:QGdecomposition}
		\QQ[G] = \QQ[G]e_1 \oplus \cdots \oplus \QQ[G]e_r
	\end{equation}
	of \(\QQ[G]\) into simple \(\QQ[G]\)-algebras, where the \(e_i\) are mutually orthogonal central idempotents associated to the irreducible rational representations \(W_i\) of \(G\). \Cref{eq:QGdecomposition} is called the \emph{isotypical decomposition} of \(\QQ[G]\). Furthermore, for each \(e_j\) there are primitive orthogonal idempotents \(q_{j,1}, \ldots, q_{j, l_j} \in \QQ[G] e_j\) such that \(\QQ[G] e_j = \QQ[G] q_{j, 1} \oplus \cdots \oplus \QQ[G] q_{j, l_j}\)
	is a decomposition of \(\QQ[G] e_j\) into minimal right ideals. This decomposition is only determined up to isomorphism. Moreover, if \(V_j \in \Irr_\CC(G)\) is Galois associated to \(W_j\), then \(l_j = \dim V_j/m_{V_j}\).

	\begin{defn}\label{defn:GroupAlgebraDecomposition}
		The decomposition
		\[
			\QQ[G] = \bigoplus_{j = 1}^{r} \bigoplus_{i = 1}^{l_j} \QQ[G]q_{j, i},
		\]
		unique up to isomorphism, is called the \emph{group algebra decomposition}.
	\end{defn}

	\subsubsection{Representations induced by a trivial representation}

	For a subgroup \(H\) of \(G\), we set \(\rho_H  \coloneqq \Ind_H^G (1_H)\), where \(1_H\) denotes the trivial representation of \(H\) (see \cite{book:Serre}); since \(1_H\) is rational, so it is \(\rho_H\). Also, we denote the character of \(\rho_H\) by \(\chi_H\).
    We can decompose \(\rho_H\) through Frobenious reciprocity and the rational character tables of \(H\) and \(G\).

	\subsection{Group algebra decomposition by Prym varieties} For a curve \(Z\), we denote its Jacobian variety by \((\J(Z), \Theta_Z)\).

	\subsubsection{Prym varieties}

	The pullback \(f^* \colon \J(Y) \to \J(X)\) is a homomorphism with finite kernel; thus an \emph{isogeny} onto \(f^*\J(Y)\). Also, we have the following result.

	\begin{prop}[\cite{book:Lange}*{Proposition~11.4.3}]\label{prop:CyclicEtaleFactor}
		The homomorphism \(f^*\) is not injective if and only if \(f\) factorizes via a cyclic unramified covering \(f'\) of degree greater than \(2\) as in the following commutative diagram:
		\[
		\begin{tikzcd}
			X \ar[rr, "f"] \ar[dr] & & Y \\
			& Z \ar[ur,"f'"']
		\end{tikzcd}
		\]
		In this case, we have that \(\ker f'^*\) is cyclic of order \(\deg f'\).
	\end{prop}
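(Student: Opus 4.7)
The plan is to prove the two directions separately and then compute $\ker f'^*$ explicitly. For the ``if'' direction, I would invoke the classical correspondence between cyclic \'etale covers $f'\colon Z\to Y$ of degree $n$ and line bundles $L\in\J(Y)$ of exact order $n$: one realizes
\[
    Z \;=\; \mathrm{Spec}_Y\Bigl(\bigoplus_{i=0}^{n-1} L^{-i}\Bigr),
\]
and by construction $f'^*L\cong\mathcal{O}_Z$. Pulling back further along the remaining factor $X\to Z$ then gives $f^*L\cong\mathcal{O}_X$, exhibiting $L$ as a nontrivial element of $\ker f^*$.

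For the ``only if'' direction, I would first observe that $\ker f^*$ is finite---contained in $\J(Y)[\deg f]$---via the identity $f_*\circ f^* = (\deg f)\cdot\mathrm{Id}_{\J(Y)}$. Picking a nontrivial $L\in\ker f^*$ of exact order $n$, I would form the associated cyclic cover $f'\colon Z\to Y$; since $L$ has degree zero, the cover is \'etale of degree $n$. The trivialization $f^*L\cong\mathcal{O}_X$, read through the universal property of the relative spectrum above as an $n$-th root on $X$ of the defining trivialization of $L^n\cong\mathcal{O}_Y$, produces the required map $X\to Z$ over $Y$ factoring $f$.

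For the final claim that $\ker f'^*$ is cyclic of order $n$, the inclusion $\langle L\rangle\subseteq\ker f'^*$ has just been established. Conversely, for any $M\in\ker f'^*$, combining $f'^*M\cong\mathcal{O}_Z$, the projection formula, and $f'_*\mathcal{O}_Z\cong\bigoplus_{i=0}^{n-1}L^{-i}$ produces a decomposition
\[
    H^0(Z,\mathcal{O}_Z) \;\cong\; \bigoplus_{i=0}^{n-1}H^0(Y,M\otimes L^{-i}).
\]
One-dimensionality of the left-hand side forces exactly one summand $M\otimes L^{-i}$ to carry a nonzero section; being of degree zero, it is trivial, so $M\cong L^{i}$.

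The main obstacle is the necessity direction: one must verify that the cyclic cover built from a degree-zero torsion bundle is genuinely \'etale (where the vanishing of the discriminant of $\bigoplus L^{-i}$ is the key point) and, more delicately, that the trivialization $f^*L\cong\mathcal{O}_X$ descends canonically to the factorization through $Z$. This last step is cleanest phrased via the adjunction between $f'_*$ and $f'^*$: the trivialization yields an $\mathcal{O}_Y$-algebra map $\bigoplus L^{-i}\to f_*\mathcal{O}_X$, whose relative spectrum over $Y$ is precisely the desired morphism $X\to Z$.
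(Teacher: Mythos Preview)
The paper does not give its own proof of this proposition: it is quoted as Proposition~11.4.3 from Birkenhake--Lange, \emph{Complex Abelian Varieties}, and stated without argument. So there is no in-paper proof to compare against.

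Your sketch is essentially the standard argument one finds in that reference: the dictionary between connected cyclic \'etale covers of degree~$n$ and line bundles of exact order~$n$ handles both implications, and the projection-formula computation of $H^0(Z,\mathcal O_Z)$ pins down $\ker f'^*$ as the cyclic group $\langle L\rangle$. The plan is sound. When you write it out in full, the two points worth making explicit are (i)~that the spectrum $Z=\mathrm{Spec}_Y\bigl(\bigoplus_{i=0}^{n-1}L^{-i}\bigr)$ is connected precisely because $L$ has \emph{exact} order~$n$, and (ii)~that the induced morphism $X\to Z$ is nonconstant (its composite with $f'$ is $f$), hence automatically a finite surjective map of compact Riemann surfaces.
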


	Each abelian subvariety has a unique complementary abelian subvariety (see \cite{book:Lange}*{p.~125}), so the following definition make sense.

	\begin{defn}
		The \emph{Prym variety} of \(f\colon X \to Y\), denoted by \(\Prym(f)\), is the complement of \(f^*\J(Y)\) in \(\J(X)\) with respect to \(\Theta_X\).
	\end{defn}

	With respect to the polarization induced on \(\Prym(f)\), we have the following result.

	\begin{theo}[\cite{paper:RR03}*{Theorem~2.5}]\label{theo:PrymAndPolarizations}
		Let \(f\colon X\to Y\) be a covering map of degree \(d\). Denote by \(\Theta_{f^*\J(Y)}\) and by \(\Theta_{\Prym(f)}\) the polarizations induced by \(\Theta_X\) in \(f^*\J(Y)\) and \(\Prym(f)\), respectively. Then:
		\begin{enumerate}
			\item\label{item:InducedPolarizationOnJacobian} The pullback of \(\Theta_{f^*\J(Y)}\) by \(f^*\) is analytically equivalent to \(\Theta_Y^{\otimes d}\).

			\item\label{item:DecompositionInJacAndPrym} The homomorphism \(f^*\) induces an isomorphism
			\[
				f^* \colon \dfrac{(\ker f^*)^{\perp}}{\ker f^*} \to \ker (\Theta_{f^*\J(Y)})\text{.}
			\]
			Moreover, we have \(\ker (\Theta_{\Prym(f)}) = \ker (\Theta_{f^*\J(Y)})\).
		\end{enumerate}
	\end{theo}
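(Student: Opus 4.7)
The argument splits along the three assertions; the two core ingredients are the projection formula $f_*\circ f^* = d\cdot\mathrm{id}_{\J(Y)}$ and the transpose duality $\widehat{f^*}\circ\phi_X = \phi_Y\circ f_*$ relating pullback and norm under the principal polarizations $\phi_X,\phi_Y$ of $\J(X)$ and $\J(Y)$.

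For \cref{item:InducedPolarizationOnJacobian}, let $\iota\colon f^*\J(Y)\hookrightarrow\J(X)$ be the inclusion, so $\phi_{\Theta_{f^*\J(Y)}}=\hat\iota\circ\phi_X\circ\iota$, and let $q\colon\J(Y)\twoheadrightarrow f^*\J(Y)$ be the surjection factoring $f^*$, so $\iota\circ q=f^*$. Then
\[
q^*\phi_{\Theta_{f^*\J(Y)}} \;=\; \widehat{f^*}\circ\phi_X\circ f^* \;=\; \phi_Y\circ f_*\circ f^* \;=\; d\phi_Y,
\]
which is exactly the polarization morphism of $\Theta_Y^{\otimes d}$.

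For the first assertion of \cref{item:DecompositionInJacAndPrym}, set $K=\ker f^*$ and $\phi_B=\phi_{\Theta_{f^*\J(Y)}}$. The preceding display exhibits $\phi_B$ as the descent of $d\phi_Y$ from $\J(Y)$ to $\J(Y)/K\cong f^*\J(Y)$. Mumford's descent theory for polarizations identifies $\ker\phi_B$ with $K^\perp/K$, where
\[
K^\perp \;=\; \{\,y\in\J(Y)[d] : e_d(y,k)=1 \text{ for all } k\in K\,\}
\]
is the orthogonal of $K$ inside $\J(Y)[d]=\ker(d\phi_Y)$ with respect to the Weil pairing $e_d$; the descent is legitimate precisely when $K$ is isotropic under $e_d$. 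By \cref{prop:CyclicEtaleFactor}, $K$ is cyclic, and since the Weil pairing of a Jacobian is alternating, every cyclic subgroup is automatically isotropic, so the hypothesis is met. Restricting $q$ to $K^\perp$ gives a surjection onto $\ker\phi_B$ with kernel $K^\perp\cap K=K$, yielding the claimed isomorphism.

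For the second assertion of \cref{item:DecompositionInJacAndPrym}, I would invoke the following general fact about complementary abelian subvarieties $A_1,A_2$ of a principally polarized abelian variety $(A,\Theta)$: one has $\ker\Theta_{A_1}=A_1\cap A_2=\ker\Theta_{A_2}$. Indeed, $\ker\Theta_{A_i}$ consists of $a\in A_i$ with $\phi_A(a)\in\ker\hat\iota_{A_i}$; since $\phi_A$ is principal, this identifies $\ker\Theta_{A_i}$ with $A_i\cap A_i^\perp$, and complementarity gives $A_1^\perp=A_2$ and $A_2^\perp=A_1$. Specializing to $(\J(X),\Theta_X)$ with $A_1=f^*\J(Y)$ and $A_2=\Prym(f)$ yields $\ker\Theta_{\Prym(f)}=\ker\Theta_{f^*\J(Y)}$. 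The main technical hurdle of the plan is the descent identification $\ker\phi_B=K^\perp/K$: once Mumford's theta-group formalism and the isotropy of $K$ (from its cyclicity via \cref{prop:CyclicEtaleFactor}) are invoked, the remaining steps are routine manipulations with duality and principal polarizations.
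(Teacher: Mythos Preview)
The paper does not prove this theorem at all: it is quoted verbatim from \cite{paper:RR03}*{Theorem~2.5} and no argument is supplied, so there is nothing in the present paper to compare your proposal against. Your write-up is therefore an independent proof sketch rather than a reconstruction of anything the author does.

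That said, one point in your argument deserves correction. You justify the isotropy of \(K=\ker f^*\) inside \(\J(Y)[d]\) by appealing to \cref{prop:CyclicEtaleFactor} to conclude that \(K\) is cyclic. But \cref{prop:CyclicEtaleFactor} asserts only that \(\ker f'^{*}\) is cyclic for the cyclic \'etale factor \(f'\), not that \(\ker f^{*}\) itself is cyclic for an arbitrary covering \(f\); in general these kernels need not coincide. The isotropy of \(K\) is, however, automatic and does not require cyclicity: you have already shown in part~(1) that \(q^{*}\Theta_{f^{*}\J(Y)}\) is analytically equivalent to \(d\Theta_Y\), so the polarization \(d\Theta_Y\) \emph{does} descend along \(q\), and Mumford's criterion then forces \(K\) to be isotropic for \(e_d\). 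With that adjustment, your identification \(\ker\phi_B\cong K^{\perp}/K\) and the complementary-subvariety argument \(\ker\Theta_{A_1}=A_1\cap A_2=\ker\Theta_{A_2}\) are the standard route and are correct.
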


	Now consider holomorphic maps between compact Riemann surfaces as in the following diagram:
	\begin{equation}\label{eq:PairOfCoverings}
		\begin{tikzcd}
			& X \ar[dl,"f_1"'] \ar[dr, "f_2"]&\\[-1ex]
			X_1 \ar[dr,"g_1"'] & &  X_2 \ar[dl, "g_2"]\\[-1ex]
			& Y &
		\end{tikzcd}
	\end{equation}
	If \(Y\) is \emph{minimal} (namely, the maps \(g_1\) and \(g_2\) do not factor by a common intermediate covering but \(\Id_Y\)), then \(f_2^* \Prym(g_2)\) is an abelian subvariety of \(\Prym(f_1)\) (see \cite{paper:LangeRecillas04}*{Proposition~2.2}).
	\begin{defn}
		The \emph{Prym variety of the pair of coverings} \((f_1, f_2)\) is the complementary abelian subvariety of \(f_2^*\Prym(g_2)\) in \(\Prym(f_1)\), it is denoted by \(\Prym(f_1,f_2)\).
	\end{defn}
	By, if necessary, redefining \(X = X_1 \times_Y X_2\), we can assume that \(X\) is  \emph{minimal} (namely, the maps \(f_1\) and \(f_2\) do not factor by a common covering but \(\Id_X\)).
	If \(Y\) is also minimal, the following result yields the dimension of \(\Prym(f_1,f_2)\).
	\begin{prop}[\cite{paper:LangeRecillas04}*{Proposition~2.5}]\label{prop:DimForPrymOfPairs}
		For any pair of coverings as in diagram \eqref{eq:PairOfCoverings} where \(X\) and \(Y\) are minimal, we have
		\begin{align*}
		\dim \Prym(f_1,f_2) ={}& (d_1 - 1)(d_2 - 1)(g - 1) + \dfrac{1}{2}\bigl[\deg(R_{f_1}) + (d_1 - 1)\deg(R_{g_1}) - \deg(R_{g_2})\bigr]\text{,}
		\end{align*}
		where \(d_1\) and \(d_2\) are the degrees of \(f_1\) and \(f_2\), respectively.
	\end{prop}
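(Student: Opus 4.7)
The dimension of $\Prym(f_1,f_2)$, as the complement of $f_2^*\Prym(g_2)$ inside $\Prym(f_1)$, satisfies
\[
\dim\Prym(f_1,f_2) = \dim\Prym(f_1) - \dim f_2^*\Prym(g_2).
\]
My plan is to rewrite each summand as a combination of the genera of $X$, $X_1$, $X_2$, $Y$, and then translate those genera into ramification data via Riemann--Hurwitz.

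Since $f_1^*$ is an isogeny from $\J(X_1)$ onto its image, we have $\dim\Prym(f_1) = g_X - g_{X_1}$, and analogously $\dim\Prym(g_2) = g_{X_2} - g$. The delicate step is asserting $\dim f_2^*\Prym(g_2) = \dim\Prym(g_2)$, that is, that the restriction of $f_2^*$ to $\Prym(g_2)$ has finite kernel. This is where I would use the minimality hypothesis on $X$ and $Y$: by \cref{prop:CyclicEtaleFactor}, a positive-dimensional kernel would produce a cyclic unramified intermediate covering that combines with minimality to yield a common intermediate covering of $f_1$ and $f_2$ (or of $g_1$ and $g_2$), contradicting the assumption. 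Granting this,
\[
\dim\Prym(f_1,f_2) = g_X - g_{X_1} - g_{X_2} + g.
\]

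The compatibility $g_1\circ f_1 = g_2\circ f_2$ together with the chosen degrees forces $\deg g_1 = d_2$ and $\deg g_2 = d_1$. Applying Riemann--Hurwitz to $f_1$, $g_1$, and $g_2$ yields
\begin{align*}
2g_X - 2 &= d_1(2g_{X_1} - 2) + \deg R_{f_1},\\
2g_{X_1} - 2 &= d_2(2g - 2) + \deg R_{g_1},\\
2g_{X_2} - 2 &= d_1(2g - 2) + \deg R_{g_2}.
\end{align*}
Substituting the second into the first eliminates $g_{X_1}$ and expresses $g_X$ solely in terms of $g$, $d_1$, $d_2$, $\deg R_{f_1}$, and $\deg R_{g_1}$. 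Plugging these expressions into $g_X - g_{X_1} - g_{X_2} + g$, the coefficient of $g-1$ collects as $d_1 d_2 - d_1 - d_2 + 1 = (d_1-1)(d_2-1)$, while the ramification terms combine into $\tfrac{1}{2}\bigl[\deg R_{f_1} + (d_1-1)\deg R_{g_1} - \deg R_{g_2}\bigr]$, giving the stated formula.

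The main obstacle is the verification that $f_2^*$ remains an isogeny on $\Prym(g_2)$ under the minimality hypothesis; once that is granted, the remainder is a routine Riemann--Hurwitz bookkeeping.
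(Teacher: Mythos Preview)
The paper does not supply its own proof of this proposition; it is quoted verbatim from \cite{paper:LangeRecillas04}*{Proposition~2.5}. So there is nothing in the present paper to compare your argument against.

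Your overall strategy is the natural one and the Riemann--Hurwitz bookkeeping is correct: once one has
\[
\dim\Prym(f_1,f_2)=g_X-g_{X_1}-g_{X_2}+g,
\]
the three applications of Riemann--Hurwitz combine exactly as you describe to yield the stated formula.

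There is, however, a confusion in what you flag as the ``delicate step.'' The pullback \(f_2^*\colon \J(X_2)\to\J(X)\) \emph{always} has finite kernel---the paper states this explicitly in the sentence introducing \cref{prop:CyclicEtaleFactor}---so its restriction to any abelian subvariety, in particular to \(\Prym(g_2)\), automatically has finite kernel and is an isogeny onto its image. \Cref{prop:CyclicEtaleFactor} tells you when \(f^*\) fails to be \emph{injective}, but even then the kernel is a finite cyclic group, never positive-dimensional; your worry about a ``positive-dimensional kernel'' cannot arise. No minimality hypothesis is needed for \(\dim f_2^*\Prym(g_2)=\dim\Prym(g_2)\).

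Where minimality actually enters is one step earlier: the minimality of \(Y\) is what guarantees (via \cite{paper:LangeRecillas04}*{Proposition~2.2}, cited just before the definition in the paper) that \(f_2^*\Prym(g_2)\subseteq\Prym(f_1)\), so that \(\Prym(f_1,f_2)\) is well defined as a complement inside \(\Prym(f_1)\). The minimality of \(X\) is a normalisation ensuring that \(g_X\) and \(\deg R_{f_1}\) are determined by the rest of the diagram; the identity \(\dim\Prym(f_1,f_2)=g_X-g_{X_1}-g_{X_2}+g\) itself does not require it.
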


	\subsubsection{Group algebra decomposition through Prym varieties}

	Since \(G\) acts naturally on \(\J(\hat X)\), there is an algebra homomorphism \(\QQ[G] \to \End_{\QQ}(\J(\hat X))\coloneqq \End(\J(\hat X)) \otimes \QQ\). For simplicity, we denote its images just as elements of \(\QQ[G]\) (althought it is not, in general, an isomorphism).
    For any \(\alpha \in \End_\QQ(\J(\hat X))\), we define \(\ima (\alpha)\coloneqq \ima(m\alpha)\) where \(m\) is any positive integer such that \(m\alpha \in \End(\J(\hat X))\); this definition certainly does not depend on \(m\).
	Set \(\Irr_\QQ(G) = \{W_1,\ldots,W_r\}\) with \(W_1 \cong 1_{G}\). The rational isotypical decomposition of \(\QQ[G]\), given by \cref{eq:QGdecomposition}, induces a decomposition of \(\J(\hat X)\) as follows directly from \cite{paper:LangeRecillas04(2)}*{Proposition~1.1}.

	\begin{prop}\hfill
		\begin{itemize}
			\item Each \(\ima e_i\) is \(G\)-stable, and \(\operatorname{Hom}_{G}(\ima e_i,\ima e_j) = 0\) whenever \(i\neq j\).

			\item The addition map induces an isogeny \(\ima e_1 \times \cdots \times \ima e_r \to \J(\hat X)\).
		\end{itemize}
	\end{prop}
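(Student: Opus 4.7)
The plan is to translate the algebraic decomposition \(\QQ[G] = \bigoplus_i \QQ[G] e_i\) into the claimed geometric decomposition of \(\J(\hat X)\), exploiting the fact that the \(e_i\) are central, pairwise orthogonal, and sum to \(1\). The subtle point is that the \(e_i\) only live in \(\End_\QQ(\J(\hat X))\), so every geometric statement holds only up to isogeny and one must clear denominators when necessary, using the convention \(\ima e_i \coloneqq \ima(m e_i)\) for a positive integer \(m\) such that \(m e_i \in \End(\J(\hat X))\).

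For the first bullet, I would first verify that \(\ima e_i\) is \(G\)-stable: given \(\sigma \in G\) and \(x = (me_i)(y) \in \ima e_i\), centrality of \(e_i\) in \(\QQ[G]\) gives \(\sigma x = \sigma (m e_i) y = (m e_i)(\sigma y) \in \ima e_i\). For the Hom vanishing, let \(\phi \in \operatorname{Hom}_G(\ima e_i, \ima e_j)\) with \(i \neq j\). Extending \(\phi\) to \(\End_\QQ\) and using \(G\)-equivariance, \(\phi\) commutes with every element of \(\QQ[G]\); hence for any \(x \in \ima e_i\) we have \(e_j \phi(x) = \phi(e_j x) = \phi(e_j e_i x) = 0\) by orthogonality, while \(x \in \ima e_i\) and \(\phi(x) \in \ima e_j\) imply (up to isogeny) \(e_j \phi(x) = \phi(x)\). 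Thus \(\phi = 0\).

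For the isogeny in the second bullet, the identity \(\sum_i e_i = 1\) in \(\QQ[G]\) gives \(\sum_i m e_i = m \cdot \Id\) in \(\End(\J(\hat X))\) for a suitable common denominator \(m\). Hence the composition
\[
    \J(\hat X) \xrightarrow{(m e_1, \ldots, m e_r)} \ima e_1 \times \cdots \times \ima e_r \xrightarrow{\text{add}} \J(\hat X)
\]
equals multiplication by \(m\), which is an isogeny of \(\J(\hat X)\). This immediately shows that the addition map is surjective with finite kernel; orthogonality \(e_i e_j = \delta_{ij} e_i\) (after scaling) forces the kernel of the addition map to lie in \(\prod_i \ker(m e_i|_{\ima e_i})\), which is finite.

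The main obstacle is purely bookkeeping: keeping careful track of the fact that central idempotents in \(\QQ[G]\) are not genuine endomorphisms of \(\J(\hat X)\), so all equalities like \(e_i^2 = e_i\) and \(\sum e_i = 1\) must be interpreted modulo a common multiplier and translated to isogeny statements. Once that dictionary is fixed — as in \cite{paper:LangeRecillas04(2)}*{Proposition~1.1}, which the proposition is cited from — the argument reduces to the elementary algebraic facts above.
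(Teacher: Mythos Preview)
The paper does not prove this proposition; it simply states that it ``follows directly from \cite{paper:LangeRecillas04(2)}*{Proposition~1.1}''. Your proposal is a correct unpacking of precisely that cited argument, and you explicitly acknowledge this at the end, so there is no divergence in approach---you have merely supplied the details that the paper leaves to the reference.

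One minor remark on your write-up: in the Hom-vanishing step you say you ``extend \(\phi\) to \(\End_\QQ\)'', but \(\phi\) is a map between two distinct subvarieties, not an endomorphism of \(\J(\hat X)\). What you actually use is only that \(G\)-equivariance of \(\phi\) gives \(\phi(\alpha x)=\alpha\phi(x)\) for every \(\alpha\in\QQ[G]\) (interpreted after clearing denominators), which is all your computation needs; the argument is fine once phrased that way.
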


	This decomposition is called the \emph{isotypical decomposition} of \(\J(\hat X)\) and it is unique up to a permutation of the factors. Recall the notation of \cref{defn:GroupAlgebraDecomposition}; the abelian subvarieties \(\ima q_{i,j}\) are pairwise isogenous for each fixed \(i \in \{1, \ldots, r\}\). Thereby, there are abelian subvarieties \(B_i\) such that \(\ima e_i\) is isogenous to \(B_i^{l_i}\) (we can set \(B_i \coloneqq \ima q_{i,1}\), for example, but this certainly does not determine \(B_i\) uniquely). Thereby, we have the following result (see \cite{paper:LangeRecillas04(2)}*{p.~140})

	\begin{theo}\label{theo:GroupAlgebraDecomposition}
        There are abelian subvarieties \(B_1, \ldots, B_r\) of \(\J(\hat X)\) and a \(G\)-equivariant isogeny
		\begin{equation*}
			\J(\hat X) \sim B_1^{l_1} \times \cdots \times B_r^{l_r}\text{,}
		\end{equation*}
		where \(l_i = \dim(V_i)/m_i\) with \(V_i\in\Irr_\CC(G)\) Galois associated to \(W_i\).
	\end{theo}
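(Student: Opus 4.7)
The plan is to combine the group algebra decomposition of $\QQ[G]$ (\cref{defn:GroupAlgebraDecomposition}) with the isotypical decomposition of $\J(\hat X)$ (the Proposition stated immediately before the theorem). First, applying the algebra homomorphism $\QQ[G] \to \End_{\QQ}(\J(\hat X))$ to the decomposition
\[
\QQ[G] = \bigoplus_{j=1}^{r} \bigoplus_{i=1}^{l_j} \QQ[G] q_{j,i}
\]
gives a collection of pairwise orthogonal idempotents in $\End_\QQ(\J(\hat X))$, whose images $\ima q_{j,i}$ are $G$-stable abelian subvarieties of $\J(\hat X)$. Since $\sum_j e_j = \Id$ in $\QQ[G]$ and $\sum_i q_{j,i} = e_j$, the addition map yields a $G$-equivariant isogeny
\[
\prod_{j=1}^{r}\prod_{i=1}^{l_j} \ima q_{j,i} \longrightarrow \J(\hat X).
\]

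Next, I would show that for each fixed $j$, the subvarieties $\ima q_{j,1}, \ldots, \ima q_{j,l_j}$ are pairwise isogenous, which is the key step to obtain the cleaner product form $B_j^{l_j}$. The idempotents $q_{j,i}$ are, by construction, a complete set of primitive orthogonal idempotents in the simple $\QQ$-algebra $\QQ[G]e_j$, so the minimal right ideals $\QQ[G]q_{j,i}$ are all isomorphic as right $\QQ[G]$-modules (any two minimal right ideals of a simple algebra are isomorphic, via right multiplication by a suitable element $\alpha_{i,k}$ with $q_{j,i}\alpha_{i,k}q_{j,k}=\alpha_{i,k}$). These elements $\alpha_{i,k}$ induce $G$-equivariant homomorphisms $\ima q_{j,k} \to \ima q_{j,i}$ whose compositions are multiplications by nonzero rationals, hence isogenies. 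Setting $B_j \coloneqq \ima q_{j,1}$ therefore gives $\ima e_j \sim B_j^{l_j}$ $G$-equivariantly.

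Finally, I would verify the value of $l_j$. This is already given in the discussion preceding \cref{defn:GroupAlgebraDecomposition}: if $V_j \in \Irr_\CC(G)$ is Galois associated to $W_j$, then $l_j = \dim(V_j)/m_{V_j}$, which follows from the Wedderburn decomposition of $\QQ[G]e_j$ as a matrix algebra over the division algebra with center a number field and Schur index $m_{V_j}$. Combining the isogeny $\ima e_j \sim B_j^{l_j}$ over all $j$ with the product isogeny displayed above yields the desired
\[
\J(\hat X) \sim B_1^{l_1} \times \cdots \times B_r^{l_r}.
\]

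The main obstacle is the middle step, namely producing the $G$-equivariant isogenies between the $\ima q_{j,i}$ for fixed~$j$: one must pass carefully from an abstract isomorphism of right ideals of $\QQ[G]e_j$ to an actual isogeny of abelian varieties, checking that multiplication by the intertwining elements $\alpha_{i,k}$ really descends to a morphism of abelian subvarieties (and not merely of rational endomorphism images) and that the resulting map commutes with the $G$-action. Everything else is either direct manipulation of orthogonal idempotents or a direct citation of the representation-theoretic formula for $l_j$ quoted in the preliminaries.
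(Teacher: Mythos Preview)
Your proposal is correct and follows exactly the approach the paper itself sketches in the paragraph preceding the theorem: pass from the isotypical decomposition $\J(\hat X)\sim\prod_i\ima e_i$ to $\ima e_i\sim B_i^{l_i}$ by observing that the $\ima q_{i,j}$ are pairwise isogenous for fixed $i$ and setting $B_i\coloneqq\ima q_{i,1}$. The paper does not give a self-contained proof but cites \cite{paper:LangeRecillas04(2)}*{p.~140}; your write-up simply fills in the details (the intertwining elements $\alpha_{i,k}$ producing the isogenies) that the paper leaves to that reference.
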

	This is called the \emph{group algebra decomposition} of \(\J(\hat X)\). If \(W_1 \cong 1_{G}\), then \(l_1 = 1\) and \(B_1 \sim \J(Y)\) (see \cite{paper:CR2006}). For the rest of the \(B_i\), we have the following result, which is a direct consequence of \cite{paper:CR2006}*{Corollary~5.6}.

	\begin{prop}\label{prop:GAComponentAsPrym}
        Let \(H\) and \(N\) be subgroups of \(G\) with \(H\subseteq N\). If \(\rho_H = W_i \oplus \rho_N\), then \(\Prym\bigl( \pi_H^N \bigr) \sim B_i\). 
	\end{prop}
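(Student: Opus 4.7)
The plan is to deduce the statement from \cref{theo:GroupAlgebraDecomposition} applied to both \(\hat{X}/H\) and \(\hat{X}/N\), combined with the defining property of the Prym variety as an orthogonal complement. First, I would recall the more refined version of the Carocca–Rodriguez result that underlies \cref{theo:GroupAlgebraDecomposition}: for each subgroup \(K \leq G\), the intermediate Jacobian \(\J(\hat{X}/K)\) is \(G\)-equivariantly isogenous to \(\prod_{j=1}^{r} B_j^{n_j^K}\), where \(n_j^K\) is the multiplicity of the rational irreducible \(W_j\) in \(\rho_K\). By Frobenius reciprocity this multiplicity equals \(\dim V_j^K / m_{V_j}\) for \(V_j \in \Irr_\CC(G)\) Galois associated to \(W_j\); specializing to \(K = \{1\}\) recovers the exponents \(l_j\) of \cref{theo:GroupAlgebraDecomposition}.

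Next, since \(\pi_N = \pi_H^N \circ \pi_H\), the pullback \((\pi_H^N)^* \colon \J(\hat{X}/N) \to \J(\hat{X}/H)\) has finite kernel, is \(G\)-equivariant, and respects the canonical isotypical decomposition of both sides: both decompositions arise from the action of the same central idempotents \(e_j \in \QQ[G]\) on the ambient \(\J(\hat{X})\), and the factor \(B_j^{n_j^K}\) is obtained by intersecting \(\ima e_j\) with \(\pi_K^* \J(\hat X/K)\). Consequently the sub-abelian variety \((\pi_H^N)^* \J(\hat{X}/N) \subseteq \J(\hat{X}/H)\) is isogenous to \(\prod_j B_j^{n_j^N}\) sitting inside \(\prod_j B_j^{n_j^H}\), and taking the complement with respect to the polarization induced by \(\Theta_{\hat{X}/H}\) yields
\[
\Prym\bigl(\pi_H^N\bigr) \sim \prod_{j=1}^{r} B_j^{\,n_j^H - n_j^N}.
\]

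Finally, the hypothesis \(\rho_H = W_i \oplus \rho_N\) forces \(n_j^H - n_j^N = \delta_{ij}\), so the product collapses to a single factor and \(\Prym(\pi_H^N) \sim B_i\), as required. The main technical point to verify is the compatibility between the pullback \((\pi_H^N)^*\) and the group algebra decompositions at the two levels; this should be immediate from the \(G\)-equivariance of \cref{theo:GroupAlgebraDecomposition}, but it is precisely what requires the multiplicity-aware version of the Carocca–Rodriguez result rather than only the statement of \cref{theo:GroupAlgebraDecomposition} itself.
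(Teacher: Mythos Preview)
Your proposal is correct and aligns with the paper's approach: the paper does not give an independent proof but states that the proposition ``is a direct consequence of \cite{paper:CR2006}*{Corollary~5.6}'', and what you have written is precisely an outline of that corollary---namely, that \(\J(\hat X/K)\sim\prod_j B_j^{\langle \rho_K,W_j\rangle}\) for every subgroup \(K\), whence \(\Prym(\pi_H^N)\sim\prod_j B_j^{\langle\rho_H,W_j\rangle-\langle\rho_N,W_j\rangle}\). One small wording issue: the map \((\pi_H^N)^*\) is not literally ``\(G\)-equivariant'' since \(G\) does not act on \(\hat X/H\) or \(\hat X/N\); the correct statement, which you do give in the next sentence, is that the compatibility is seen after pulling everything back into \(\J(\hat X)\) via \(\pi_H^*\) and \(\pi_N^*\) and using the central idempotents \(e_j\) there.
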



	Now let \(N_1\) and \(N_2\) be two subgroups of \(\Mon(f)\). Set \(M \coloneqq N_1 \cap N_2\) and \(N \coloneqq \angp{N_1, N_2}\), then \(\hat X/M\) and \(\hat X/N\) are both minimal:
    In this case, we have the following result.

	\begin{theo}[\cite{paper:LangeRecillas04}*{Corollary~3.5}]\label{theo:PrymOfPairsOfCoverings}
        If each rational irreducible representation of \(G\) is absolutely irreducible and
		\(
			\chi_M + \chi_N - \chi_{N_1} - \chi_{N_2} = \sum_{i=2}^{r} s_i \chi_{W_i}\text{,}
		\) then \(\Prym(\pi_{N_1}^M,\pi_{N_2}^M) \sim B_2^{s_2} \times \cdots \times B_r^{s_r}\).
	\end{theo}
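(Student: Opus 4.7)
The plan is to compute, for each $i\ge 2$, the multiplicity of $B_i$ in $\Prym(\pi_{N_1}^M, \pi_{N_2}^M)$ via the isotypical decomposition of $\J(\hat X)$ and to match it against $s_i$. Because every rational irreducible representation of $G$ is absolutely irreducible, $V_i = W_i$ and $m_{V_i} = 1$, so $l_i = \dim W_i$ and the rational and complex decompositions coincide. For any subgroup $H \le G$, the pullback $\pi_H^*$ realizes $\J(\hat X/H)$, up to isogeny, as the image of the central idempotent $e_H = \frac{1}{\ord{H}}\sum_{h\in H} h$ acting on $\J(\hat X)$, and Frobenius reciprocity gives
\[
    \dim W_i^{H} = \angp{\chi_{W_i}|_{H}, 1_H}_H = \angp{\chi_{W_i}, \chi_H}_G,
\]
so the multiplicity of $B_i$ in $\pi_H^*\J(\hat X/H)$ equals $\angp{\chi_{W_i}, \chi_H}$. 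Applying this to $H \in \{M, N_1, N_2, N\}$ yields the isotypical content of all four Jacobians in the square.

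Now $\Prym(\pi_{N_1}^M)$ is, by definition, the complement with respect to $\Theta_{\hat X/M}$ of the $G$-stable subvariety $(\pi_{N_1}^M)^* \J(\hat X/N_1)$ inside $\J(\hat X/M)$. Since the induced polarization on each isotypical block $B_i^{l_i}$ is $G$-invariant, this complement also decomposes isotypically, and the multiplicity of $B_i$ in $\Prym(\pi_{N_1}^M)$ equals $\angp{\chi_{W_i}, \chi_M - \chi_{N_1}}$ for $i\ge 2$; the same argument applied to $\pi_N^{N_2}$ yields multiplicity $\angp{\chi_{W_i}, \chi_{N_2} - \chi_N}$ in $\Prym(\pi_N^{N_2})$. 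Minimality of $\hat X/M$ and $\hat X/N$ allows us to invoke \cite{paper:LangeRecillas04}*{Proposition~2.2}, which places $(\pi_{N_2}^M)^*\Prym(\pi_N^{N_2})$ as a $G$-stable abelian subvariety of $\Prym(\pi_{N_1}^M)$ with the same isotypical multiplicities, since the pullback is an isogeny onto its image.

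Taking orthogonal complements once more, the multiplicity of $B_i$ in $\Prym(\pi_{N_1}^M, \pi_{N_2}^M)$ equals
\[
    \angp{\chi_{W_i}, \chi_M - \chi_{N_1}}_G - \angp{\chi_{W_i}, \chi_{N_2} - \chi_N}_G = \angp{\chi_{W_i}, \chi_M + \chi_N - \chi_{N_1} - \chi_{N_2}}_G = s_i,
\]
which is the desired identity; note that the $i=1$ term automatically vanishes because $\chi_{W_1}=1$ contracts against $\chi_M + \chi_N - \chi_{N_1} - \chi_{N_2}$ trivially, so $B_1\sim \J(Y)$ does not appear. The main obstacle, and the only step that is not pure character arithmetic, is justifying that forming orthogonal complements with respect to the induced polarizations is compatible with the isotypical decomposition: this rests on the fact that a $G$-stable abelian subvariety carries a $G$-invariant induced polarization and that its orthogonal complement is therefore also $G$-stable and splits isotypically in the same fashion. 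Once this compatibility is granted, the entire argument reduces to bookkeeping on $\chi_M$, $\chi_{N_1}$, $\chi_{N_2}$, and $\chi_N$ via Frobenius reciprocity.
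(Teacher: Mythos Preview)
The paper does not give its own proof of this statement: it is quoted verbatim as \cite{paper:LangeRecillas04}*{Corollary~3.5} and used as a black box. Your argument is essentially the one underlying that reference---computing the multiplicity of each $B_i$ in $\J(\hat X/H)$ as $\dim W_i^{H}=\angp{\chi_{W_i},\chi_H}$ via Frobenius reciprocity, and then subtracting along the square---so there is nothing to contrast against the paper itself.

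One small point worth tightening: when you say that the $i=1$ term ``contracts trivially'', what you mean is that $\angp{1_G,\chi_H}=1$ for every subgroup $H$, so $\angp{1_G,\chi_M+\chi_N-\chi_{N_1}-\chi_{N_2}}=1+1-1-1=0$. Also, your acknowledged ``main obstacle''---that taking complements with respect to the induced polarization is compatible with the isotypical decomposition---can be disposed of more cleanly than you suggest: the isotypical components $\ima e_i$ are canonical (not merely unique up to isogeny), so for any $G$-stable abelian subvariety $A\subset\J(\hat X)$ one has $A\sim\prod_i(A\cap\ima e_i)$, and the complement of $A$ in a larger $G$-stable $B$ satisfies the same, with multiplicities subtracting because $\operatorname{Hom}_G(\ima e_i,\ima e_j)=0$ for $i\neq j$. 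With that said explicitly, your proof is complete.
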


	\section{Galois closure of a 5-fold covering}\label{sec:MonodromyOfF}

	In this section, we give necessary and sufficient criteria for a potential ramification data of \(f\) to be actually realizable. Then, all possible monodromy groups \(G\) modulo conjugacy in \(\Sym_5\) are tabulated in terms of those realizable tuples of ramification data.

	\subsection{Realizable ramification data}

    The type of each branch value of \(f\) is the cycle structure of a permutation in \(\Sym_5\); hence, it can be \([5]\), \([4,1]\), \([3,2]\), \([3,1,1]\), \([2,2,1]\) or \([2,1,1,1]\). This possible types are classified in two classes:
	\begin{itemize}
		\item the even ones, namely \([5]\), \([3,1,1]\) and \([2,2,1]\); and
		\item the odd ones, namely \([4,1]\), \([3,2]\) and \([2,1,1,1]\).
	\end{itemize}
    \begin{rema}
        Each ramification type is assumed to be the cycle structure of a permutation in \(\Sym_5\).
    \end{rema}

	\begin{defn}
		A tuple of ramification types \((t_1\ldots,t_n)\) with an even number of odd branch values will be called \emph{even}.
	\end{defn}

	\begin{theo}\label{theo:RealizablePermutationGP}
        A \(5\)-fold covering \(f\colon X\to Y\) with \(g\geq 1\) and ramification data \((t_1, \ldots, t_n)\) exists if and only if \((t_1\ldots,t_n)\) is even.
	\end{theo}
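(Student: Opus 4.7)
My plan treats the two directions separately. The forward direction is immediate from the signature relation \cref{eq:genVector}: if a generating vector exists in a transitive subgroup $G \leq \Sym_5$ with elliptic generators $c_j$ of cycle type $t_j$, then reducing \cref{eq:genVector} modulo $[\Sym_5, \Sym_5] = \Alt_5$ shows $\prod_j c_j \in \Alt_5$, so the number of odd factors $c_j$---equivalently, the number of odd types $t_j$---must be even.

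For the converse, by \cref{prop:ExistenceOfHolomorphicMaps} it suffices to construct a generating vector of $\Sym_5$ whose elliptic generators have cycle types $t_1,\ldots,t_n$ and satisfy \cref{eq:genVector}. I plan to begin by picking $c_1,\ldots,c_n\in\Sym_5$ of the prescribed cycle types; evenness yields $P \coloneqq c_1\cdots c_n \in \Alt_5$. Since every element of $\Alt_5$ is a single commutator in $\Sym_5$---a fact easily verified by inspecting the conjugacy classes of $\Alt_5$---I can solve $[a_1,b_1] = P^{-1}$ for some $a_1,b_1 \in \Sym_5$; setting $a_i = b_i = \Id$ for $2 \leq i \leq g$, which is legitimate because $g \geq 1$, then yields \cref{eq:genVector}.

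The remaining and principal obstacle is to guarantee that $\langle a_1,b_1,c_1,\ldots,c_n\rangle$ is transitive. My plan is to force a $5$-cycle into the generating set: take $a_1$ to be a $5$-cycle and then seek $b_1$ satisfying $a_1^{b_1} = a_1 P^{-1}$, an equation which is solvable exactly when $a_1 P^{-1}$ has cycle type $[5]$. I would exploit the freedom to choose both $a_1$ among the $5$-cycles and the $c_j$ within their cycle classes---the latter alters $P$ by right-multiplication with a commutator of the form $[c_j,h]$, $h\in\Sym_5$---verifying by a short case analysis over the possible cycle types that this flexibility always suffices to make $a_1 P^{-1}$ a $5$-cycle. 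Once this is achieved, the presence of $a_1$ renders the generated subgroup transitive and the construction complete. This finite case verification is where I expect the bulk of the work to lie.
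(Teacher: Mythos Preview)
Your proposal is correct, and the overall converse strategy---choose the $c_i$, then solve $[a_1,b_1]=P^{-1}$ with a $5$-cycle in hand to guarantee transitivity---is the same as the paper's. The execution differs in two ways worth noting. First, your forward direction via the sign homomorphism (commutators lie in $\Alt_5$) is different from and cleaner than the paper's Riemann--Hurwitz argument. Second, for the converse the paper normalises $P$ to one of three explicit even permutations and reads off $a_1,b_1$ from a small table, whereas you argue uniformly by choosing $a_1$ a $5$-cycle with $a_1P^{-1}$ also a $5$-cycle. Your anticipated case analysis is genuine but shorter than you suggest: it reduces to the single fact that every element of $\Alt_5$ is a product of two $5$-cycles, checked on the four nontrivial $\Alt_5$-conjugacy classes; the extra flexibility of modifying the $c_j$ is never needed (and your description of how such a modification alters $P$ is not quite right, though this is moot).

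One wording issue: you say it suffices to build a generating vector \emph{of $\Sym_5$}, but \cref{prop:ExistenceOfHolomorphicMaps} requires only a generating vector of some transitive subgroup. Your construction does exactly that---the presence of the $5$-cycle $a_1$ forces transitivity---but the group $\langle a_1,b_1,c_1,\ldots,c_n\rangle$ need not be all of $\Sym_5$ (indeed for $n=0$, $g=1$ it cannot be, since then $P=\Id$ forces $a_1,b_1$ to commute). Adjust the phrasing accordingly and the argument goes through.
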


	\begin{proof}
		Sufficiency of the hypothesis is a consequence of the Riemann--Hurwitz formula because \(\ord{R_f}\) is even if and only if there is an even number of odd branch values.

		To prove the necessity of the hypothesis, an explicit construction will be given.
        According to \cref{prop:ExistenceOfHolomorphicMaps}, giving a \(5\)-fold covering \(f\colon X\to Y\) with branch values \(y_1,\ldots, y_n\) of types \(t_1,\ldots,t_n\) is equivalent to giving a generating \((g;m_1,\ldots,m_n)\)-vector \((a_1, b_1, \ldots, a_{g}, b_{g}, c_1, \ldots, c_n)\) of a transitive subgroup \(G\) of \(\Sym_5\) such that each \(c_i\) has cycle structure \(t_i\).

		For the case where \(n = 0\), since \(g \geq 1\), we have the generating \((g;)\)-vector
        \(((1\,2\,3\,4\,5), \Id_{\times(2g - 1)})\)
		of the transitive subgroup \(\angp{(1\,2\,3\,4\,5)}\) of \(\Sym_5\), where \(\Id_{\times(2g - 1)}\) denotes a list with \(\Id\) repeated \(2g - 1\) times (we will keep this notation throughout this article).

		For the case where \(n > 0\), choose each permutation \(c_i\) of cycle structure \(t_i\) arbitrarily. The permutation \(\prod_{i=1}^{n}c_i\) is even because there is an even number of odd permutations \(c_i\), so it is of type \([5]\), \([3,1,1]\) or \([2,2,1]\); thereby, up to conjugacy in \(\Sym_5\), we can be assume that \(\prod_{j=1}^{n}c_i\) is \((1\,2\,3\,4\,5)\), \((1\,2\,3)\) or \((1\,2)(3\,4)\). Set \(a_i = b_i = \Id\) for \(i \in \{2,\ldots, g\}\), and, depending on \(\prod_{i=1}^n c_i\), set \(a_1\) and \(b_1\) according to \cref{table:ExistanceOfCoveringsOfDegree5}. Thereby, \((a_1,\ldots,a_{g},b_1,\ldots,b_{g},c_1,\ldots,c_n)\) is a generating \((g;m_1,\ldots, m_n)\)-vector of a subgroup \(G\) of \(\Sym_5\); we just need to prove that \(G\) is transitive, but:
		\begin{itemize}
			\item If \(\prod_{i=1}^n c_i = (1\,2\,3\,4\,5)\), then \((1\,2\,3\,4\,5) \in G\) and hence \(G\) transitive.
			\item If \(\prod_{i=1}^n c_i = (1\,2)(3\,4)\) or \(\prod_{i=1}^n c_i = (1\,2\,3)\), then \(a_1b_1\) is of type \([5]\) and hence \(G\) is transitive.\qedhere
		\end{itemize}


		\begin{table}
			\caption{Choice of permutations for a generating vector of a transitive subgroup of \(\Sym_5\)}\label{table:ExistanceOfCoveringsOfDegree5}
			\begin{tabular}{@{} *4{>{\(} l <{\)}} @{}}\toprule
				\prod_{i=1}^n c_i	& a_1				& b_1			& \prod_{i=1}^{g_Y}[a_i,b_i] \\ \midrule
				(1\,2\,3\,4\,5)		& (1\,3)(2\,5\,4)	& (1\,3\,5\,2)	& (1\,5\,4\,3\,2)\\
				(1\,2)(3\,4)		& (1\,3)(2\,4\,5)	& (1\,2\,4\,5)	& (1\,2)(3\,4)\\
				(1\,2\,3)			& (1\,5\,2)(3\,4)	& (1\,3\,5\,4)	& (1\,3\,2)\\ \bottomrule
			\end{tabular}
		\end{table}
	\end{proof}

	\begin{coro}\label{coro:SymmetricIsRealizable}
        If \(n>0\), the map \(f\colon X\to Y\) of \cref{theo:RealizablePermutationGP} may be chosen such that \(G =\Sym_5\).
	\end{coro}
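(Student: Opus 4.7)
The plan is to observe that the generating vector constructed in the proof of \cref{theo:RealizablePermutationGP} already generates the full symmetric group, so essentially no modification is needed: it suffices to verify that, for each of the three rows of \cref{table:ExistanceOfCoveringsOfDegree5}, the pair $(a_1, b_1)$ alone generates $\Sym_5$. Since $\angp{a_1, b_1}$ is contained in the group $G$ generated by the whole tuple $(a_1, b_1, \ldots, c_n)$, this forces $G = \Sym_5$.

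To establish $\angp{a_1, b_1} = \Sym_5$ in each case, I would rely on two observations. First, in every row of the table the element $a_1$ has cycle type $[3,2]$, hence order~$6$. Second, in each row the subgroup $\angp{a_1, b_1}$ contains a $5$-cycle: for the first row, the commutator $[a_1, b_1] = (1\,5\,4\,3\,2)$ is already a $5$-cycle, while for the other two rows the proof of \cref{theo:RealizablePermutationGP} already argues that $a_1 b_1$ has cycle type $[5]$. Consequently, $\angp{a_1, b_1}$ is transitive on $\{1, \ldots, 5\}$ and its order is divisible by both $5$ and $6$, hence by $30$.

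Now recall that the transitive subgroups of $\Sym_5$ are, up to conjugacy, $\Cyc_5$, $\Dih_5$, $\GA$, $\Alt_5$, and $\Sym_5$. None of the first four contains an element of cycle type $[3,2]$: indeed, $\Cyc_5$ and $\Dih_5$ consist only of elements of orders in $\{1, 2, 5\}$; the group $\GA$ of order~$20$ has element orders in $\{1, 2, 4, 5\}$; and $\Alt_5$ contains no odd permutation, while $[3,2]$ is odd. Hence $\angp{a_1, b_1} = \Sym_5$, and the corollary follows.

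The argument presents no real obstacle: it reduces to the standard classification of transitive subgroups of $\Sym_5$ together with an inspection of element orders, once one notices that the $a_1$ in each row of \cref{table:ExistanceOfCoveringsOfDegree5} was chosen to have order~$6$.
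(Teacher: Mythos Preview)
Your proof is correct and essentially the same as the paper's. The paper observes that the generating vector from \cref{table:ExistanceOfCoveringsOfDegree5} contains elements of orders $5$, $6$ and $4$, so $60 \mid \ord{G}$, forcing $G \in \{\Alt_5, \Sym_5\}$, and then uses that $a_1$ is odd to exclude $\Alt_5$; you instead note directly that $a_1$ has cycle type $[3,2]$ and that $\angp{a_1,b_1}$ is transitive, and then rule out each proper transitive subgroup by the absence of a $[3,2]$ element --- a minor variation on the same idea, slightly more economical in that it never uses the order of $b_1$.
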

	\begin{proof}
		From the proof of \cref{theo:RealizablePermutationGP}, we see that, for every even tuple \((t_1\ldots,t_n)\) with \(n>0\), we can choose a generating vector such that \(G\) contains permutations of order \(5\), \(6\) and \(4\) (see \cref{table:ExistanceOfCoveringsOfDegree5}). Since \(\lcm(4,5,6) = 60\), we have \(60\mid\ord{G}\); hence \(G \in \{\Alt_5, \Sym_5\}\). Since \(a_1\) is an odd permutation, we have \(G = \Sym_5\).
	\end{proof}

	The situation becomes a bit more complicated when \(g = 0\); namely, when \(Y \cong \PP^1\). By the Riemann--Hurwitz formula, we have \(2g_X - 2 = 5(2g - 2) + \deg(R_f)\); but \(g = 0\), so \(2g_X = -8 + \deg(R_f)\) and, since \(g_X \geq 0\), we get \(8 \leq \deg(R_f)\). We will see that this condition is the only additional condition needed for \((t_1,\ldots, t_n)\) to be realizable.

	For \(t_i = [\nu_{i, 1},\ldots, \nu_{i, {k_i}}]\), we set \(\deg(t_i) \coloneqq \sum_{j=1}^{k_i} (\nu_{i,j} - 1)\) and \(\deg(t_1,\ldots,t_n) \coloneqq \sum_{i=1}^{n} \deg(t_i)\).
	We call \(\deg(t_1,\ldots, t_n)\) the \emph{degree} of the tuple \((t_1,\ldots, t_n)\) since it coincides with \(\deg(R_f)\) when \((t_1,\ldots, t_n)\) is the actual ramification data of a map \(f\).

	\begin{theo}\label{theo:RealizablePermutationG0}
        There is a \(5\)-fold covering map \(f\colon X\to \PP^1\) with branch data \((t_1, \ldots, t_n)\) if and only if the tuple \((t_1,\ldots,t_n)\) is even and \(\deg(t_1,\ldots,t_n) \geq 8\).
	\end{theo}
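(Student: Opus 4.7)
\emph{Necessity.} The Riemann--Hurwitz formula gives $\deg R_f = 2g_X + 8 \geq 8$. Since $\deg R_f$ is even and the parity of $\deg(t_i) = 5 - k_i$ matches the parity of the sign of any permutation of cycle type $t_i$, the tuple $(t_1, \ldots, t_n)$ must be even.

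\emph{Sufficiency.} By \cref{prop:ExistenceOfHolomorphicMaps}, it suffices to exhibit a generating $(0; m_1, \ldots, m_n)$-vector $(c_1, \ldots, c_n)$ of some transitive subgroup $G \leq \Sym_5$ with $c_i$ of cycle type $t_i$ and $c_1 \cdots c_n = \Id$. The main difficulty relative to \cref{theo:RealizablePermutationGP} is the absence of the commutator $[a_1, b_1]$ to absorb the leftover $(\prod c_i)^{-1}$.

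My plan is to induct on $n$. For the base case $n = 2$, the relation $c_2 = c_1^{-1}$ combined with transitivity forces $t_1 = t_2 = [5]$, which is realized by $c_1 = (1\,2\,3\,4\,5)$ and $c_2 = c_1^{-1}$. For the inductive step $n \geq 3$, since the tuple is even I select two indices $i, j$ whose types $t_i, t_j$ have the same parity. The heart of the argument is a splicing lemma: for each such pair there exist permutations $a, b$ of types $t_i, t_j$ whose product has a prescribed even cycle type $t'$, where $t'$ is chosen so that the reduced $(n-1)$-tuple (obtained by removing $t_i, t_j$ and adjoining $t'$) remains even and of degree at least $8$. The induction hypothesis then yields a transitive realization of the reduced tuple; conjugating in $\Sym_5$ to match the entry of type $t'$ and replacing it by the pair $(a, b)$ produces the desired realization for the original tuple. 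Transitivity is preserved automatically, since the new generating set contains the old one.

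The crux is the splicing lemma together with the constraint $\deg \geq 8$ on the reduced tuple. Not every even target $t'$ is attainable from every pair (for instance, two transpositions cannot multiply to a $5$-cycle), so a finite case check is required to tabulate which triples $(t_i, t_j, t')$ are realizable; and when $\deg(t_i) + \deg(t_j)$ is very small there may be no admissible $t'$ that keeps the reduced degree above $8$. I expect the main obstacle to be precisely this finite enumeration of obstructed configurations, which I plan to dispatch by direct explicit construction in the spirit of \cref{table:ExistanceOfCoveringsOfDegree5}.
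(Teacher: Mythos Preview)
Your inductive strategy is valid and will yield a complete proof, but it is organized differently from the paper's. Rather than inducting on $n$, the paper proves a standalone lemma (\cref{lemm:Type5CycleAsProduct}): any even tuple of degree $\geq 4$ admits a realization in $\Sym_5$ whose product is a $5$-cycle. It then partitions the given tuple into two even sub-tuples each of degree $\geq 4$, realizes them with products $(1\,2\,3\,4\,5)$ and its inverse, and concatenates; transitivity is immediate from the presence of a $5$-cycle. Your splicing step is essentially the two-term instance of this same lemma, applied repeatedly instead of once. Both routes leave an identical residue to be done by hand: the six $n=3$ tuples of shape $(\text{odd degree }3,\ \text{odd degree }3,\ \text{even degree }2)$ with total degree $8$, which the paper dispatches in \cref{table:ChoiceOfCiCase332}. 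One correction to your diagnosis: the obstruction to splicing arises not when $\deg(t_i)+\deg(t_j)$ is small but when it is \emph{large} relative to $D-8$, since then no even $t'$ with $\deg(t')\leq 4$ can keep the reduced degree at least $8$; indeed every same-parity pair except two transpositions can be spliced to a $5$-cycle, and a pair of transpositions (degree-sum $2$) never obstructs once $D\geq 8$. With that adjustment your induction terminates exactly at the paper's exceptional set.
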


    In order to prove \cref{theo:ClassificationG0}, we will need \cref{lemm:AnyCycleByAProductInA5,lemm:Type5CycleAsProduct}.

	\begin{lemm}\label{lemm:AnyCycleByAProductInA5}
		Consider an even tuple \((t_1,\ldots, t_n)\) with \(n \geq 2\) and at least one even permutation. For each even cycle structure \(t\), there are permutations \(c_1, \ldots, c_n\) in \(\Sym_5\) such that each \(c_i\) is of type \(t_i\) and \(\prod_{i=1}^n c_i\) is of type \(t\).
	\end{lemm}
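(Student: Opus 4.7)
The plan is to argue by induction on $n\geq 2$, with the inductive statement being exactly the lemma. A basic observation I would use throughout is that the hypothesis is invariant under cyclic rotation of the indices $1,\ldots,n$, since $c_1c_2\cdots c_n$ and its cyclic rotations are conjugate in $\Sym_5$ and hence of the same cycle type; so up to cyclic renaming of the $t_i$ I may place any convenient entry at any desired position.

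For the base case $n=2$, the parity condition combined with the existence of an even entry forces both $t_1$ and $t_2$ to be even, so only a finite number of pairs of types need to be checked against the three even targets $[5]$, $[3,1,1]$, $[2,2,1]$; explicit witnesses can be recorded in a short table. For $n=3$, the conditions force the unordered type tuple to have shape (even, odd, odd), and again an explicit table suffices.

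For the inductive step $n\geq 4$, I would split according to whether two even entries of the tuple are cyclically adjacent. If yes, cyclically rotate to put them at positions $n-1,n$. By the $n=2$ base case, for any prescribed even type $s$ I can find $c_{n-1},c_n$ of types $t_{n-1},t_n$ with $c_{n-1}c_n$ of type $s$. The reduced tuple $(t_1,\ldots,t_{n-2},s)$ still satisfies the hypotheses of the lemma (even, and containing $s$ as an even entry), so the inductive hypothesis produces $c_1,\ldots,c_{n-2}$ and some $\sigma$ of type $s$ with total product of type $t$. A global conjugation by any $g\in\Sym_5$ with $g(c_{n-1}c_n)g^{-1}=\sigma$ aligns the two pieces while preserving all individual cycle types. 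If no two even entries are cyclically adjacent, there exists an even $t_j$ whose two cyclic neighbours are both odd; rotate to put these three at the end and combine them via the $n=3$ base case, reducing to an $(n-2)$-tuple which still satisfies the inductive hypothesis.

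The main obstacle is the base cases themselves, which carry essentially all of the combinatorial content. In principle the nonvanishing of the relevant class-multiplication constants in $\Sym_5$ (or in $\Alt_5$) can be checked via Frobenius' formula, but explicit tables of witnesses are probably the cleanest exposition once the inductive reduction has done its work.
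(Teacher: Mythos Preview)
Your inductive scheme works, with one slip: for $n=3$ the hypotheses do \emph{not} force the shape (even, odd, odd); the tuple may also be (even, even, even). The fix is immediate --- with two or more even entries among three cyclic positions some pair is automatically adjacent, so your Case~1 reduction (combine an adjacent even pair via the $n=2$ table, drop to length $n-1=2$) already covers it. Only the genuinely mixed shape (even, odd, odd) survives as a separate base case.

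The paper takes a more direct route that avoids induction entirely. It places an even type at position~$1$, chooses $c_2,\ldots,c_n$ \emph{arbitrarily} of the prescribed types, observes that the partial product $\prod_{i\geq 2}c_i$ is then even, conjugates to normalize it to one of the three representatives $(1\,2\,3\,4\,5)$, $(1\,2)(3\,4)$, $(1\,2\,3)$, and reads $c_1$ off a single $3\times 3\times 3$ table indexed by the target $t$, the normalized partial product, and $t_1$. In effect the paper's argument \emph{is} your $n=2$ base case applied once regardless of $n$: the tail $c_2\cdots c_n$ plays the role of a single even permutation. This buys brevity --- one $27$-entry table rather than your $n=2$ table plus a substantially larger (odd, even, odd) table for $n=3$ --- while your scheme makes the reduction mechanism explicit and would scale more naturally to $\Sym_d$ for larger $d$, where a one-shot exhaustive table becomes impractical.
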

	\begin{proof}
		Without loss of generality, assume that \(t_1\) is even. For each \(i\in \{2,\ldots,n\}\), chose any \(c_i \in \Sym_5\) of type \(t_i\). Since \((t_1,\ldots, t_n)\) is even, the permutation \(\prod_{i=2}^n c_i\) is also even; thus, modulo conjugation, we can assume that \(\prod_{i=2}^n c_i\) is \((1\,2\,3\,4\,5)\), \((1\,2)(3\,4)\) or \((1\,2\,3)\). Choose \(c_1\) depending on \(t\), \(t_1\) and \(\prod_{i=2}^{n} c_i\) according to \cref{table:ChoiceOfC1}. Thereby, the permutation \(\prod_{i=1}^{n} c_i\) is of type \(t\) and each \(c_i\) of type \(t_i\). \qedhere

		\begin{table}
			\caption{Choice of permutations with prescribed cycle structure of its product}\label{table:ChoiceOfC1}
			\begin{tabular}{@{} *5{>{\(} l <{\)}} @{}}\toprule
				t		& \prod_{i = 2}^{n} c_i	& t_1	& c_1	& \prod_{i = 1}^{n} c_i\\\midrule
				\multirow{9}{*}{[5]} & \multirow{3}{*}{(1\,2\,3\,4\,5)} & [5] & (1\,2\,3\,4\,5) & (1\,3\,5\,2\,4)\\
				&& [2,2,1]	& (1\,3)(2\,4) 	& (1\,4\,5\,3\,2)\\
				&& [3,1,1]	& (1\,2\,5)	& (1\,5\,2\,3\,4)\\ \cmidrule(l){3-5}
				& \multirow{3}{*}{(1\,2)(3\,4)}	& [5]	& (1\,3\,2\,4\,5)	& (1\,4\,2\,3\,5)\\
				&& [2,2,1]	& (1\,5)(2\,3)	& (1\,3\,4\,2\,5)\\
				&& [3,1,1]	& (1\,5\,3)	& (1\,2\,5\,3\,4)\\ \cmidrule(l){3-5}
				& \multirow{3}{*}{(1\,2\,3)}	& [5]	& (1\,2\,5\,4\,3)	& (1\,5\,4\,3\,2)\\
				&& [2,2,1]	& (1\,5)(3\,4)	& (1\,2\,4\,3\,5)\\
				&& [3,1,1]	& (2\,4\,5)	& (1\,4\,5\,2\,3)\\ \midrule
				\multirow{9}{*}{[2,2,1]} & \multirow{3}{*}{(1\,2\,3\,4\,5)} & [5] & (1\,3\,2\,4\,5) & (1\,4)(3\,5)\\
				&& [2,2,1]	& (1\,5)(2\,4) 	& (1\,4)(2\,3)\\
				&& [3,1,1]	& (1\,4\,3)	& (1\,2)(4\,5)\\ \cmidrule(l){3-5}
				& \multirow{3}{*}{(1\,2)(3\,4)}	& [5]	& (1\,4\,5\,3\,2)	& (2\,4)(3\,5)\\
				&& [2,2,1]	& (1\,4)(2\,3)	& (1\,3)(2\,4)\\
				&& [3,1,1]	& (1\,2\,5)	& (1\,5)(3\,4)\\ \cmidrule(l){3-5}
				& \multirow{3}{*}{(1\,2\,3)}	& [5]	& (1\,4\,3\,2\,5)	& (1\,5)(3\,4)\\
				&& [2,2,1]	& (2\,3)(4\,5)	& (1\,3)(4\,5)\\
				&& [3,1,1]	& (1\,5\,3)	& (1\,2)(3\,5)\\ \midrule
				\multirow{9}{*}{[3,1,1]} & \multirow{3}{*}{(1\,2\,3\,4\,5)} & [5] & (1\,4\,5\,3\,2) & (3\,5\,4)\\
				&& [2,2,1]	& (1\,5)(3\,4) 	& (1\,2\,4)\\
				&& [3,1,1]	& (1\,5\,4)	& (1\,2\,3)\\ \cmidrule(l){3-5}
				& \multirow{3}{*}{(1\,2)(3\,4)}	& [5]	& (1\,5\,4\,3\,2)	& (2\,5\,4)\\
				&& [2,2,1]	& (1\,5)(3\,4)	& (1\,2\,5)\\
				&& [3,1,1]	& (1\,3\,4)	& (1\,2\,3)\\ \cmidrule(l){3-5}
				& \multirow{3}{*}{(1\,2\,3)}	& [5]	& (1\,5\,4\,3\,2)	& (3\,5\,4)\\
				&& [2,2,1]	& (1\,5)(2\,3)	& (1\,3\,5)\\
				&& [3,1,1]	& (1\,2\,3)	& (1\,3\,2)\\ \bottomrule
			\end{tabular}
		\end{table}
	\end{proof}

	\begin{lemm}\label{lemm:Type5CycleAsProduct}
		Consider an even tuple \((t_1,\ldots,t_n)\) of cycle structures of permutations in \(\Sym_5\) such that \(\deg(t_1,\ldots,t_n) \geq 4\). There are permutations \(c_1, \ldots, c_n\) in \(\Sym_5\) such that each \(c_i\) is of type \(t_i\) and \(\prod_{i=1}^n c_i\) is of type \([5]\).
	\end{lemm}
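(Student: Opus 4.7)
The plan is to handle the case where the hypothesis of Lemma~\ref{lemm:AnyCycleByAProductInA5} fails and to invoke that lemma (with target type \(t = [5]\)) in the remaining situations. The small cases \(n = 0\) and \(n = 1\) are immediate: \(n = 0\) is vacuous because \(\deg((\,)) = 0 < 4\), and for \(n = 1\) evenness of the tuple together with \(\deg(t_1) \geq 4\) leaves only \(t_1 = [5]\) among the even cycle structures \([5], [2,2,1], [3,1,1]\) (whose degrees are \(4, 2, 2\)), so any \(5\)-cycle works.

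For \(n \geq 2\) with at least one even \(t_i\), Lemma~\ref{lemm:AnyCycleByAProductInA5} applied with \(t = [5]\) immediately yields the required permutations. The genuinely new situation is therefore \(n \geq 2\) with every \(t_i\) of odd type, i.e.\@ \(t_i \in \{[4,1], [3,2], [2,1,1,1]\}\); evenness of the tuple then forces \(n\) to be even. When \(n \geq 4\), I would reduce to the previous case by choosing arbitrary \(c_1, c_2\) of types \(t_1, t_2\) and absorbing them into their product \(c' = c_1 c_2\), whose cycle structure \(t'\) is even. Apply Lemma~\ref{lemm:AnyCycleByAProductInA5} to the shorter tuple \((t', t_3, \ldots, t_n)\) (still even, now containing an even entry, and of length \(n-1 \geq 3\)) to obtain permutations \(d, c_3', \ldots, c_n'\) whose product is a \(5\)-cycle \(\sigma\). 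Since \(d\) and \(c'\) share cycle structure, there is \(\tau\in\Sym_5\) with \(d = \tau c' \tau^{-1}\); setting \(c_i \coloneqq \tau^{-1} c_i' \tau\) for \(i \geq 3\) yields \(c_1 c_2 c_3 \cdots c_n = \tau^{-1}(d c_3' \cdots c_n')\tau = \tau^{-1}\sigma\tau\), again of type \([5]\).

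The remaining subcase is \(n = 2\) with both \(t_i\) odd and \(\deg(t_1) + \deg(t_2) \geq 4\). This excludes only \(([2,1,1,1], [2,1,1,1])\) (degree sum \(2\)) out of the nine possible ordered pairs of odd types; for each of the eight surviving pairs one exhibits explicit permutations \(c_1, c_2\) with \(c_1 c_2\) of type \([5]\), for instance \(c_1 = (1\,5)\), \(c_2 = (1\,2\,3\,4)\) for \(([2,1,1,1],[4,1])\), and analogous choices for the others. This finite enumeration, in the spirit of Table~\ref{table:ChoiceOfC1}, is the main (purely combinatorial) obstacle, but it is a short and mechanical check using the fact that in \(\Sym_5\) the product of any two odd permutations can be arranged to be a \(5\)-cycle whenever their combined support is large enough to avoid a common fixed point.
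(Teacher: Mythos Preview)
Your argument is correct in outline and shares the paper's overall strategy—dispatch the case with an even $t_i$ via Lemma~\ref{lemm:AnyCycleByAProductInA5}, then treat the all-odd case separately—but there is one small slip and the all-odd case is organised differently.

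The slip: in your $n \geq 4$ all-odd reduction you write ``choosing arbitrary $c_1, c_2$''. If $t_1 = t_2$ and you happen to pick $c_2 = c_1^{-1}$ (e.g.\ $c_1 = c_2 = (1\,2)$ when $t_1 = t_2 = [2,1,1,1]$), then $c' = \Id$ and $t' = [1,1,1,1,1]$, which is not among the even types handled by the proof of Lemma~\ref{lemm:AnyCycleByAProductInA5} (its table only covers $t_1 \in \{[5],[2,2,1],[3,1,1]\}$). You must stipulate $c_1 c_2 \neq \Id$; this is always arrangeable, so the fix is trivial, but ``arbitrary'' as written is not quite right.

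For comparison, the paper organises the all-odd case not by $n = 2$ versus $n \geq 4$ but by whether some $t_i$ lies in $\{[4,1],[3,2]\}$. If so, it places that type as $t_1$, lets $c_2,\ldots,c_n$ be arbitrary, and then chooses $c_1$ from a six-entry table indexed by $t_1$ and the conjugacy class of the (odd) partial product $\prod_{i\geq 2} c_i$; this handles every $n \geq 2$ uniformly. Only when every $t_i = [2,1,1,1]$ (forcing $n \geq 4$) does it reduce, by combining \emph{three} transpositions into a $4$-cycle and invoking the previous paragraph. Your route trades that six-entry table for heavier reuse of Lemma~\ref{lemm:AnyCycleByAProductInA5} plus a conjugation step and an eight-pair base check at $n = 2$; both are valid, but the paper's version avoids the $c' = \Id$ pitfall and needs no conjugation argument.
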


	\begin{proof}
		Suppose there is at least one even permutation in \((t_1,\ldots,t_n)\), say \(t_1\). Since \(\deg(t_1,\ldots,t_n) \geq 4\), if \(n = 1\), then \(t_1 = [5]\) and the lemma is trivially satisfied. If \(n \geq 2\), then \cref{lemm:AnyCycleByAProductInA5} implies the existence of the permutations \(c_i\).

		Now suppose that there are no even permutations in \((t_1,\ldots,t_n)\), so \(n \geq 2\). Suppose that at least one \(t_i\), namely \(t_1\), is of type \([4,1]\) or \([3,2]\). Set \(c_i\) as any permutation with cycle structure \(t_i\) for \(i\in \{2,\ldots, n\}\). Since \((t_1,\ldots,t_n)\) is even, the product \(\prod_{i=2}^{n} c_i\) must be an odd permutation; hence, its cycle structure is \([2,1,1,1]\), \([4,1]\) or \([3,2]\). Thereby, we can assume that \(\prod_{i=2}^{n} c_i\) is \((1\,2)\), \((1\,2\,3\,4)\) or \((1\,2\,3)(4\,5)\); for each case, choose \(t_1\) according to its type as given in \cref{table:ChoiceOfC1Odds}. In this manner, the product \(\prod_{i=1}^{n} c_i\) is of type \([5]\) and each \(c_i\) has cycle structure \(t_i\).

		\begin{table}
			\caption{Product of odd permutations with cycle structure \([5]\)}\label{table:ChoiceOfC1Odds}
			\begin{tabular}{@{} *4{>{\(} l <{\)}} @{}}\toprule
				\prod_{i = 2}^{n} c_i	& t_1	& c_1	& \prod_{i = 1}^{n} c_i\\\midrule
				\multirow{2}{*}{(1\,2)} & [3,2] & (1\,5\,3)(2\,4) & (1\,4\,2\,5\,3)\\
				& [4,1]	& (1\,5\,3\,4) 	& (1\,2\,5\,3\,4)\\ \cmidrule(l){2-4}
				\multirow{2}{*}{(1\,2\,3\,4)}	& [3,2] & (1\,3\,4)(2\,5) & (1\,5\,2\,4\,3)\\
				& [4,1]	& (2\,3\,4\,5) 	& (1\,3\,5\,2\,4)\\ \cmidrule(l){2-4}
				\multirow{2}{*}{(1\,2\,3)(4\,5)}& [3,2] & (1\,5\,3)(2\,4) & (1\,4\,3\,5\,2)\\
				& [4,1]	& (1\,5\,2\,3) 	& (1\,3\,5\,4\,2)\\ \bottomrule
			\end{tabular}
		\end{table}

		Finally, suppose that there are neither even nor type \([4,1]\) or \([3,2]\) permutations in \((t_1, \ldots, t_n)\); that is, we have \(t_i = [2,1,1,1]\) for each \(i \in \{1,\ldots,n\}\). Since \(\deg(t_1,\ldots,t_n) \geq 4\), we have that \(n \geq 4\). Set \(c_1 = (3\,4)\), \(c_2 = (2\,3)\) and \(c_3 = (1\,2)\); thus \(\prod_{i = 1}^3 c_i = (1\,2\,3\,4)\). Applying the previous paragraph to the tuple \(([4,1],t_4, \ldots, t_n)\) yields permutations \(c_4,\ldots,c_n\) of type \([2,1,1,1]\) such that \(\prod_{i=1}^{n} c_i\) is of type \([5]\) and each \(c_i\) has cycle structure \(t_i\).
	\end{proof}

	\begin{proof}[Proof of \cref{theo:RealizablePermutationG0}]
		The sufficiency of the hypothesis is a direct consequence of the Riemann--Hurwitz formula.

		To prove necessity, we give explicit constructions in the several possible cases.
        According to \cref{prop:ExistenceOfHolomorphicMaps}, giving a \(5\)-fold covering \(f\colon X\to \PP^1\) with branch types \(t_1,\ldots,t_n\) is equivalent to give a generating \((0;m_1,\ldots,m_n)\)-vector \((c_1, \ldots, c_n)\) of a transitive subgroup of \(\Sym_5\) such that each \(c_i\) has cycle structure \(t_i\).

		Suppose that we are in one of the following four situations:
        \begin{enumerate}
			\item\label{item:con1} there is a \(t_i\) of degree \(4\), namely \([5]\), in \((t_1,\ldots,t_n)\);
			\item\label{item:con2} there are at least two \(t_i\) of degree \(2\), namely \([3,1,1]\) or \([2,2,1]\);
			\item\label{item:con3} there are at least one \(t_i\) of degree \(3\), namely \([4,1]\) or \([3,2]\), and one of degree \(1\), namely \([2,1,1,1]\); or
			\item\label{item:con4} there are at least four \(t_i\) of degree \(1\).
		\end{enumerate}
		In any of these four cases, probably after a re-enumeration, we can take an even 
        sub-tuple \((t_1,\ldots,t_k)\) of \((t_1,\ldots, t_n)\) with \(\deg(t_1,\ldots,t_k) = 4\); hence, according to \cref{lemm:Type5CycleAsProduct}, we can choose permutations \(c_1,\ldots, c_k\) of types \(t_1, \ldots, t_k\), respectively, such that \(\prod_{i=1}^{k} c_i= (1\,2\,3\,4\,5)\). Since \(\deg(t_1,\ldots, t_n)\geq 8,\) we have that \(\deg(t_{k+1},\ldots,t_n) = \deg(t_1,\ldots, t_n) - \deg(t_1,\ldots,t_k) \geq 4;\) hence, again by \cref{lemm:Type5CycleAsProduct}, we can choose \(c_{k+1}, \ldots, c_n\) with cycle structures \(t_{k+1}, \ldots, t_n\), respectively, such that \(\prod_{i=k+1}^{n} c_i = (1\,5\,4\,3\,2)\). 

		If none of the conditions \eqref{item:con1} to \eqref{item:con4} are satisfied, then every \(t_i\) must have degree~\(3\) except maybe for one cycle structure of degree \(2\), say \(t_n\). Since \(\deg(t_1,\ldots,t_n)\geq 8\), we have \(n\geq 3\). If \(n \geq 4\), then \cref{lemm:Type5CycleAsProduct} can be applied separately to the sub-tuples \((t_1,t_2)\) and \((t_3,\ldots, t_n)\) yielding permutations \(c_1,\ldots,c_n\) of types \(t_1,\ldots,t_n\), respectively, such that \(c_1c_2 = (1\,2\,3\,4\,5)\) and \(\prod_{i=3}^{n} c_i = (1\,5\,4\,3\,2)\). 
        Otherwise, we have \(n = 3\), so \(\deg(t_1) = \deg(t_2) = 3\) and \(\deg(t_3) = 2\). If there is at least one \([3,2]\) cycle structure in \((t_1,t_2,t_3)\) set it as \(t_1\). Choose \(c_1\) to be \((1\,2\,3)(4\,5)\) or \((1\,2\,3\,4)\) according to \(t_1\) and then choose \(c_2\) according to \(t_3\) and \(c_1\) as given in \cref{table:ChoiceOfCiCase332}; set \(c_3 \coloneqq c_2^{-1}c_1^{-1}\). Thereby, in each sub-case, the permutation \(c_i\) has cycle structure \(t_i\) for each \(i\in\{1,2,3\}\) and \(c_1c_2c_3 = \Id\). Moreover, column \([5]\) of \cref{table:ChoiceOfCiCase332} gives a permutation of type \([5]\) that belongs to \(\angp{c_1,c_2,c_3}\) for each of the six sub-cases; therefore, the tuple \((c_1,c_2,c_3)\) is a generating vector of a transitive subgroup of \(\Sym_5\). \qedhere

		\begin{table}
			\caption{Choice of odd permutations with product of degree-\(2\) cycle structure}\label{table:ChoiceOfCiCase332}
			\begin{tabular}{@{} *6{>{\(} l <{\)}} @{}}\toprule
				t_3		& c_1	& t_2	& c_2	& c_1c_2 & [5] \\\midrule
				\multirow{3}{*}{[2,2,1]} & \multirow{2}{*}{(1\,2\,3)(4\,5)} & [3,2] & (1\,4\,2)(3\,5) & (1\,5)(3\,4) & c_2^{-1} c_1 c_2^{-2} c_1^2\\
				&& [4,1]	& (2\,3\,4\,5) 	& (1\,2)(3\,5) & c_1^{-2} c_2^{-1} c_1^3 c_2 c_1^{-1} c_1^{-2}\\ \cmidrule(l){3-6}
				& (1\,2\,3\,4)	& [4,1]	& (2\,4\,5\,3)	& (1\,2)(4\,5) & c_1 c_2^{-1}\\\midrule
				\multirow{3}{*}{[3,1,1]} & \multirow{2}{*}{(1\,2\,3)(4\,5)} & [3,2] & (1\,4\,5)(2\,3) & (1\,5\,2) & c_2 c_1^2 c_2^{3} \\
				&& [4,1]	& (2\,5\,4\,3) 	& (1\,2\,4) & c_1^{-1} c_2^{-1} c_1^{-2} c_2^{-1} c_1^3 \\ \cmidrule(l){3-6}
				& (1\,2\,3\,4)	& [4,1]	& (2\,5\,4\,3)	& (1\,2\,5) & c_1^{-1} c_2 c_1^{-1} c_2\\ \bottomrule
			\end{tabular}
		\end{table}
	\end{proof}

	\subsection{Monodromy group in terms of the ramification data}\label{sec:MonInTermsOfRamificationData}

	Now we give necessary and sufficient conditions on \((t_1,\ldots,t_n)\) for a transitive subgroup \(G\) of \(\Sym_5\) to be the actual monodromy group of a \(5\)-fold covering \(f\colon X\to Y\); these criteria will depend on \(g\).

	\begin{prop}\label{prop:PossibleMonodromy}
		The monodromy group \(G\) is conjugate to one of the following subgroups of \(\Sym_5\):
		\begin{enumerate}
			\item the \emph{cyclic group} \(\angp{(1\, 2\, 3\, 4\, 5)}\), denoted by \(\Cyc_5\);

			\item the \emph{dihedral group} \(\angp{(1\,2\,3\,4\,5), (2\,5)(3\,4)}\), denoted by \(\Dih_5\);

			\item\label{item:GroupGA} the group \(\angp{(1\,2\,3\,4\,5), (2\,3\,5\,4)}\), isomorphic to the \emph{general affine group} \(\GA\) (it will be denoted just by \(\GA\));

			\item the \emph{alternating group} \(\angp{(1\,2\,3\,4\,5), (1\,3\,4\,5\,2)}\), denoted by \(\Alt_5\); or

			\item the \emph{symmetric group} \(\angp{(1\,2\,3\,4\,5), (1\,2)}\), denoted by \(\Sym_5\).
		\end{enumerate}
	\end{prop}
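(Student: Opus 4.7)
The plan is to combine Sylow theory with the standard action of $\Sym_5$ on cosets of a subgroup. Since $G \leq \Sym_5$ acts transitively on $\{1,\ldots,5\}$, the orbit--stabilizer theorem gives $5 \mid \ord{G}$. By Cauchy's theorem, $G$ contains an element of order $5$, which in $\Sym_5$ must be a $5$-cycle; conjugating in $\Sym_5$, I may assume $(1\,2\,3\,4\,5) \in G$ and set $P \coloneqq \angp{(1\,2\,3\,4\,5)}$. A direct calculation shows that the normalizer $\N_{\Sym_5}(P)$ has order $20$ and coincides with $\GA$; this will be the key ambient group in the first case.

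Next, I would split on the number $n_5$ of Sylow $5$-subgroups of $G$. If $n_5 = 1$, then $P$ is normal in $G$, so $G \leq \N_{\Sym_5}(P) = \GA$. It then suffices to enumerate the subgroups of $\GA$ that contain $P$: since $[\GA : P] = 4$, any such subgroup has order $5$, $10$, or $20$, and a short verification shows that up to conjugacy these are exactly $P = \Cyc_5$, $\Dih_5$, and $\GA$ itself.

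If instead $n_5 \geq 2$, then Sylow's theorem forces $n_5 \equiv 1 \pmod 5$ and $n_5 \mid \ord{G}/5$, hence $n_5 \geq 6$. Counting order-$5$ elements gives at least $6\cdot 4 = 24$ such elements, so $\ord{G} \geq 25$. Combined with $\ord{G} \mid 120$ and $5 \mid \ord{G}$, the only possibilities are $\ord{G} \in \{30, 40, 60, 120\}$. To exclude $30$ and $40$, consider the action of $\Sym_5$ on the left cosets of $G$: this gives a homomorphism $\Sym_5 \to \Sym_{[\Sym_5:G]}$ into $\Sym_4$ or $\Sym_3$. Its kernel is a normal subgroup of $\Sym_5$, hence is $\{\Id\}$, $\Alt_5$, or $\Sym_5$; the first case is impossible since $\ord{\Sym_5} > \ord{\Sym_4}$, and the remaining two each force $\ord{G} \geq 60$, a contradiction. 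Thus $\ord{G} \in \{60, 120\}$, and since $\Alt_5$ is the unique index-$2$ subgroup of $\Sym_5$, we conclude $G \in \{\Alt_5, \Sym_5\}$.

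The main obstacle, and the only nontrivial bit, is ruling out orders $30$ and $40$; everything else is a bookkeeping exercise inside $\GA$. I do not expect any surprises, and the generators listed in the statement can be read off the concrete descriptions of $\Dih_5$, $\GA$, and $\Alt_5$ produced by the above case analysis.
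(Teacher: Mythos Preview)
Your argument is correct. One small redundancy: once you have $n_5 = 6$, the index relation $n_5 = [G:\N_G(P)]$ already gives $6 \mid \ord{G}$, so together with $5 \mid \ord{G}$ you get $30 \mid \ord{G}$ and the case $\ord{G} = 40$ never arises; but including it does no harm since your coset-action argument disposes of it anyway.

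The paper takes a completely different route: its proof is a one-line citation to the tables of transitive permutation groups of low degree in \cite{paper:TransitiveGroups}, which list the transitive subgroups of $\Sym_5$ together with generators. Your approach trades that external dependence for a self-contained argument using Sylow theory and the normal-subgroup structure of $\Sym_5$; this is more informative and explains \emph{why} the list is what it is, at the cost of a page instead of a sentence. The paper's choice is natural given that the proposition is background rather than a main result, but your version would be appropriate in a more expository setting or where the cited tables are not readily available.
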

	\begin{proof}
		The monodromy group of \(f\) is a transitive subgroup of \(\Sym_5\); those subgroups and their generators are tabulated in \cite{paper:TransitiveGroups}*{Tables~5A~and~5B}.
	\end{proof}
	The cycle structure of the elements of each group listed in \cref{prop:PossibleMonodromy} are tabulated in \cite{paper:TransitiveGroups}*{Table~5C}; \cref{table:CycleStructure} summarizes those cycle structures.
%


	\begin{table}
		\caption{Cycle structure of permutations in each possible \(G\)}
		\label{table:CycleStructure}
		\begin{tabular}{@{}ll@{}}\toprule
			Monodromy group \(G\) up to isomorphism & Cycle structure of permutations in \(G\)\\
			\midrule
			Cyclic group \(\Cyc_5\) &  \([5]\) \\
			Dihedral group \(\Dih_5\) &  \([5]\) or \([2,2,1]\) \\
			Affine group \(\GA\) & \([5]\), \([4,1]\) or \([2,2,1]\) \\
			Alternating group \(\Alt_5\) & \([5]\), \([3,1,1]\) or \([2,2,1]\) \\
			Symmetric group \(\Sym_5\) & \([5]\), \([4,1]\), \([3,2]\), \([3,1,1]\), \([2,2,1]\) or \([2,1,1,1]\)\\ \bottomrule
		\end{tabular}
	\end{table}

	In order to give a straightforward proof of \cref{theo:ClassificationGP}, we establish some auxiliary lemmata.

	\begin{lemm}\label{lemm:ProdsInD5}
		For a nonempty finite subset \(\{c_1,\ldots,c_n\}\) of permutations in \(\Dih_5\) with exactly \(m\) permutations of cycle structure \([2,2,1]\), we have:
		\begin{enumerate}
			\item If \(m\) is odd, then \(\prod_{i=1}^{n} c_i\) is of type \([2,2,1]\).
			\item If \(m\) is even, then \(\prod_{i=1}^{n} c_i\) is of type \([5]\) or the identity.
		\end{enumerate}
	\end{lemm}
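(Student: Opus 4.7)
The plan is to exploit the structure of $\Dih_5$: it has order $10$, containing the cyclic subgroup $\Cyc_5 = \angp{(1\,2\,3\,4\,5)}$ of index $2$, whose non-identity elements are exactly the four rotations of cycle type $[5]$, together with the five reflections, each of which is an involution of cycle type $[2,2,1]$. In particular, every element of $\Dih_5$ has cycle type $[5]$, $[2,2,1]$ or is the identity, and the elements of cycle type $[2,2,1]$ are precisely those lying outside $\Cyc_5$.

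First I would consider the quotient homomorphism $\varphi\colon \Dih_5 \to \Dih_5/\Cyc_5 \cong \Cyc_2$. Under $\varphi$, the elements of type $[5]$ and the identity all map to the trivial class, while each of the $m$ reflections of type $[2,2,1]$ maps to the non-trivial class. Since $\varphi$ is a homomorphism, $\varphi\paren*{\prod_{i=1}^n c_i} = \prod_{i=1}^n \varphi(c_i)$ equals the non-trivial class of $\Cyc_2$ if and only if $m$ is odd.

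Hence, if $m$ is odd the product lies outside $\Cyc_5$ and is therefore of type $[2,2,1]$, proving (1). If $m$ is even, the product lies in $\Cyc_5$; since $5$ is prime, $\Cyc_5$ consists only of the identity and elements of order $5$, and every non-identity element of $\Cyc_5$ has cycle type $[5]$, giving (2).

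No real obstacle is anticipated here: the whole argument reduces to recognizing the involutions of $\Dih_5$ as its non-rotation elements and applying the sign-like homomorphism $\Dih_5 \to \Dih_5/\Cyc_5$. The only minor care needed is to include the identity as an allowed value of the product in case (2), which is unavoidable because an even product of reflections can collapse to $\Id$ (for instance, the same reflection multiplied by itself).
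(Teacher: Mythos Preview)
Your proposal is correct and follows essentially the same approach as the paper: both arguments pass to the quotient homomorphism \(\Dih_5 \to \Dih_5/\Cyc_5\) and observe that the reflections of type \([2,2,1]\) are exactly the elements mapping to the non-trivial coset, so the parity of \(m\) determines whether the product lies in \(\Cyc_5\). Your write-up is in fact slightly more complete, since you spell out explicitly why the non-identity elements of \(\Cyc_5\) all have cycle type \([5]\).
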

	\begin{proof}
		Consider the quotient homomorphism \(\phi\colon \Dih_5 \to \Dih_5/\Cyc_5\). The image by \(\phi\) of any type \([5]\) permutation is the identity, while the image of a type \([2,2,1]\) permutation is the only non-trivial element \((1\,2)(3\,4)\Cyc_5\) of \(\Dih_5/\Cyc_5\); so \(\phi(\prod_{i=1}^{n} c_i) = \prod_{i=1}^{n} \phi(c_i) ((1\,2)(3\,4))^{m} \Cyc_5\). Hence, whether or not \(\prod_{i=1}^{n} c_i\) belongs to \(\Cyc_5\) depends solely on the parity of \(m\).
	\end{proof}

	\begin{lemm}\label{lemm:ProdsInD5Even221}
		For a nonempty tuple \((t_1,\ldots,t_n)\) of cycle structures \([2,2,1]\) or \([5]\) with an even number of \(t_i\) equal to \([2,2,1]\), we can choose a permutation \(c_i\) in \(\Dih_5\) of type \(t_i\) for each \(i\in \{1,\ldots, n\}\) such that \(\prod_{i = 1}^n c_i\) is of type \([5]\).
	\end{lemm}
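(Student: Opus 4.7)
The plan is to invoke \cref{lemm:ProdsInD5} as a first step: since the tuple contains an even number of entries equal to \([2,2,1]\), any choice of \(c_i\in\Dih_5\) of type \(t_i\) yields a product \(\prod_{i=1}^{n} c_i\in\Cyc_5\), so the product is either the identity or of type \([5]\). Hence the lemma reduces to showing that some admissible choice of the \(c_i\) gives a product different from the identity, and I would split on whether or not a \([5]\) is available among the \(t_i\).

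Suppose first that some \(t_i\) equals \([5]\). Fix such an index \(i\) and choose \(c_j\in\Dih_5\) of type \(t_j\) arbitrarily for \(j\neq i\); write \(P=c_1\cdots c_{i-1}\) and \(Q=c_{i+1}\cdots c_n\), so that the total product takes the form \(P c_i Q\). As \(c_i\) ranges over the four type-\([5]\) permutations of \(\Dih_5\) (namely \((1\,2\,3\,4\,5)^k\) for \(k\in\{1,2,3,4\}\)), the map \(c_i\mapsto P c_i Q\) is injective by cancellation, so \(P c_i Q\) takes four distinct values in \(\Dih_5\); in particular, at most one of them is the identity, and an appropriate choice of \(c_i\) avoids it.

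If on the other hand every \(t_i\) is \([2,2,1]\), then \(n\) is even with \(n\geq 2\), and I would provide a direct construction: set \(c_1=(2\,5)(3\,4)\) and \(c_2=(1\,2)(3\,5)\) (both reflections in \(\Dih_5\)), so that \(c_1 c_2 = (1\,5\,4\,3\,2)\) is of type \([5]\)—this is the standard identity \(srs=r^{-1}\) for the generators \(r=(1\,2\,3\,4\,5)\) and \(s=(2\,5)(3\,4)\) of \(\Dih_5\). For each remaining pair of indices, set \(c_{2k-1}=c_{2k}=(2\,5)(3\,4)\) for \(k\geq 2\), so that each such pair contributes the identity and the total product remains \((1\,5\,4\,3\,2)\). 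I do not expect any serious obstacle here; the argument is essentially a pigeonhole once \cref{lemm:ProdsInD5} is available, with the all-\([2,2,1]\) case handled by the one explicit computation above.
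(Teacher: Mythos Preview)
Your proof is correct. Both your argument and the paper's reach the same conclusion, but the strategies differ. The paper proceeds by a straightforward induction on \(n\): it handles \(n=1,2\) by explicit permutations, and in the inductive step it peels off one index (if a \([5]\) is present) or two indices (if all \(t_i\) are \([2,2,1]\)), reducing to a shorter tuple whose product is already of type \([5]\), then multiplies on the right by an explicit element or pair to keep the product of type \([5]\). Your route instead front-loads \cref{lemm:ProdsInD5} to reduce the whole problem to ``avoid the identity'', and then, when a \([5]\) is available, uses an injectivity/pigeonhole argument on the four nontrivial rotations rather than an inductive construction; only the all-\([2,2,1]\) case receives an explicit construction, and you dispatch it in one shot rather than inductively. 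Your approach is slightly slicker and non-inductive, and it makes clear that the only obstruction is a single bad choice of \(c_i\); the paper's inductive proof, on the other hand, is more uniformly constructive and yields explicit permutations at every stage without appealing to a counting argument.
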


	\begin{proof}
		We proceed by induction. If \(n=1\), set \(c_1 \coloneqq (1\,2\,3\,4\,5)\). If \(n = 2\), we have two cases: If there are permutations of type \([2,2,1]\), then \(t_1 = t_2 = [2,2,1]\), so we set \(c_1 \coloneqq (2\,5)(3\,4)\) and \(c_2 \coloneqq (1\,5)(2\,4)\), hence \(c_1c_2 = (1\,2\,3\,4\,5)\). If there are no permutations of type \([2,2,1]\), then \(t_1 = t_2 = [5]\), so we set \(c_1 \coloneqq (1\,2\,3\,4\,5)\) and \(c_2 \coloneqq (1\,2\,3\,4\,5)\), hence \(c_1c_2 = (1\,3\,5\,2\,4)\).

		Now consider \(n \geq 3\). If there is at least one type \([5]\) in \((t_1,\ldots,t_n)\), say \(t_n\), by inductive hypothesis, we can choose permutations \(c_i\) of type \(t_i\) for \(i\in \{1,\ldots, n - 1\}\) such that \(\prod_{i = 1}^{n-1} c_i\) is of type \([5]\), say \(\prod_{i = 1}^{n-1} c_i = (1\,2\,3\,4\,5)\). Set \(c_{n} \coloneqq (1\,2\,3\,4\,5)\), so \(\prod_{i = 1}^n c_i = (1\,3\,5\,2\,4)\). If there are no permutations of type \([5]\), by inductive hypothesis, we can choose permutations \(c_i\) of type \(t_i\) for \(i\in \{1,\ldots, n - 2\}\) such that \(\prod_{i = 1}^{n-2} c_i\) is of type \([5]\), say \(\prod_{i = 1}^{n-2} c_i = (1\,2\,3\,4\,5)\). So we set \(c_{n-1} \coloneqq (2\,5)(3\,4)\) and \(c_{n} \coloneqq (1\,5)(2\,4)\), hence \(\prod_{i = 1}^{n} c_i = (1\,3\,5\,2\,4)\).
	\end{proof}

	\begin{theo}\label{theo:ClassificationGP}
		Consider a \(5\)-fold covering \(f\colon X\to Y\) with ramification data \((t_1,\ldots, t_n)\) and monodromy group \(G\). If \(g \geq 1\), then the following statements hold:
		\begin{enumerate}
			\item\label{item:GPN=0} Suppose that \(n = 0\). If \(g = 1\), then \(G \cong \Cyc_5\). If \(g > 1\), then \(G\) may be conjugate to any group in \cref{prop:PossibleMonodromy}.

			\item\label{item:GP32or2111} If at least one \(t_i\) equals \([3,2]\) or \([2,1,1,1]\), then \(G = \Sym_5\).

			\item\label{item:GP311and41} If there are types \([3,1,1]\) and \([4,1]\) in \((t_1,\ldots,t_n)\), then \(G = \Sym_5\).

			\item\label{item:GPS5orGA} If there are no types \([3,2]\), \([3,1,1]\) or \([2,1,1,1]\) in \((t_1,\ldots, t_n)\), but at least one \(t_i\) equals \([4,1]\), then \(G = \Sym_5\) or \(G \cong \GA\).

			\item\label{item:GPS5orA5} If \((t_1,\ldots, t_n)\) has only even types and at least one of them is \([3,1,1]\), then \(G \cong \Sym_5\) or \(G \cong \Alt_5\).

			\item\label{item:GP221} Suppose that there are neither odd nor \([3,1,1]\) types in \((t_1,\ldots, t_n)\), but at least one \(t_i\) equals \([2,2,1]\). We have three cases:
			\begin{enumerate}
				\item\label{item:GP221a} If \(n = 1\), \(t_1 = [2,2,1]\) and \(g = 1\); then \(G = \Sym_5\).

				\item\label{item:GP221b} If there is an odd number of types \([2,2,1]\) in \((t_1,\ldots,t_n)\), and \(n \geq 2\) or \(g \geq 2\); then \(G = \Sym_5\) or \(G = \Alt_5\).

				\item\label{item:GP221c} If there is an even number of types \([2,2,1]\) in \((t_1,\ldots,t_n)\), then \(G\) may be conjugate to any group in \cref{prop:PossibleMonodromy} but \(\Cyc_5\).
			\end{enumerate}

			\item\label{item:GPonly5} Suppose that \(t_i = [5]\) for each \(i\). There are two cases:
			\begin{enumerate}
				\item\label{item:GPonly5a} If \(n = 1\), then \(G\) may be conjugate to any group in \cref{prop:PossibleMonodromy} but \(\Cyc_5\)

				\item\label{item:GPonly5b} If \(n > 1\), then \(G\) may be conjugate to any group in \cref{prop:PossibleMonodromy}.
			\end{enumerate}
		\end{enumerate}

		Conversely, for any realizable tuple \((t_1,\ldots,t_n)\) of ramification types and for each possible \(G\) listed in \cref{item:GPN=0,item:GP32or2111,item:GP311and41,item:GPS5orGA,item:GPS5orA5,item:GP221,item:GPonly5}, there is a \(5\)-fold covering with that ramification data and that monodromy group.
	\end{theo}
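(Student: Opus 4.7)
The plan is to prove the theorem in two directions: first, determine which groups $G$ in \cref{prop:PossibleMonodromy} are compatible with given ramification data $(t_1,\ldots,t_n)$; second, exhibit an explicit generating vector realizing every combination claimed.

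For the direct implication two elementary restrictions on $G$ are exploited. By \cref{prop:ExistenceOfHolomorphicMaps} every $c_i$ in a generating vector has cycle structure $t_i$, so each $t_i$ must appear in the cycle-structure list of $G$ recorded in \cref{table:CycleStructure}; and \cref{eq:genVector} forces $\prod_i c_i$ to lie in the commutator subgroup $[G,G]$. The cycle-structure restriction alone settles cases (2)--(5): $[3,2]$ and $[2,1,1,1]$ occur only in $\Sym_5$, yielding (2); the joint presence of $[3,1,1]$ (in $\Alt_5$ and $\Sym_5$) and $[4,1]$ (in $\GA$ and $\Sym_5$) forces $\Sym_5$, yielding (3); and analogously for (4) and (5). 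For case (6), in which only $[5]$ and $[2,2,1]$ appear, $\Cyc_5$ is immediately excluded (no $[2,2,1]$), and combining the commutator restriction with \cref{lemm:ProdsInD5} and the equalities $[\Dih_5,\Dih_5]=[\GA,\GA]=\Cyc_5$ forces the number $m$ of $[2,2,1]$-types to be even whenever $G\in\{\Dih_5,\GA\}$, giving (6b) and (6c). The boundary subcase (6a), with $g=n=1$ and one $[2,2,1]$, additionally rules out $\Alt_5$: every pair $(a,b)\in\Alt_5$ with $[b,a]$ of type $[2,2,1]$ turns out to lie in a common $\Alt_4\subseteq\Alt_5$, hence fails to generate $\Alt_5$. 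This fact is not visible from the two coarse restrictions above and is verified by a finite enumeration (e.g.\ via the \texttt{GaloisCovering} implementation referenced in the paper): the twelve distinct commutators $[a,b]$ available when $a$ is a $5$-cycle include no double transposition, while every double-transposition commutator not involving a $5$-cycle is realized only within some $\Alt_4$. Cases (7) and (1) are treated analogously; $\Cyc_5$ is excluded in (7a) because a non-trivial $5$-cycle cannot lie in the trivial commutator subgroup of $\Cyc_5$, and in (1) for $g=1$ because a generating $(1;)$-vector is an abelian pair and $\Cyc_5$ is the only transitive abelian subgroup of $\Sym_5$.

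For the converse, we produce an explicit generating vector for each admissible pair $(G,(t_1,\ldots,t_n))$. The ramification generators $c_i$ are built using \cref{lemm:ProdsInD5Even221} (for $\Dih_5$ with even $m$), or adaptations of the constructions in the proofs of \cref{theo:RealizablePermutationGP,coro:SymmetricIsRealizable} for the larger groups. Once the $c_i$ are fixed with product in $[G,G]$, the surface pairs $(a_j,b_j)$ are chosen so that $\prod_j[a_j,b_j]=(\prod_i c_i)^{-1}$ and the full tuple generates $G$; this is always possible because every group in \cref{prop:PossibleMonodromy} is $2$-generated and, for $g\geq 1$, its commutator structure accommodates any prescribed target in $[G,G]$.

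The principal obstacle is subcase (6a): the exclusion of $\Alt_5$ does not follow from the two coarse restrictions and instead requires the finer observation that any $\Alt_5$-commutator of type $[2,2,1]$ is realized only within a common $\Alt_4$ subgroup, which is most efficiently established by a finite case check.
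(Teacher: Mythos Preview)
Your proposal is correct and follows the same overall strategy as the paper: use \cref{table:CycleStructure} to narrow down the candidates for \(G\), use the commutator constraint from \cref{eq:genVector} together with \cref{lemm:ProdsInD5} and \([\Dih_5,\Dih_5]=[\GA,\GA]=\Cyc_5\) to handle item~(6), and then exhibit explicit generating vectors for the converse.

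The one genuine difference is your treatment of subcase~(6a). The paper does not argue inside \(\Alt_5\) at all: it applies Riemann--Hurwitz to compute that the Galois closure would have genus~\(16\), and then invokes Breuer's GAP-implemented classification of group actions on surfaces of genus at most~\(48\) to conclude that \(\Alt_5\) admits no action with signature \((1;2)\). Your route---a direct enumeration showing that every pair \((a,b)\in\Alt_5\) with \([a,b]\) of type \([2,2,1]\) lies in a common \(\Alt_4\)---is more self-contained and avoids the external database, but your description of the enumeration is a bit loose (the ``twelve distinct commutators'' count is correct, but the subsequent case split would benefit from being written out or delegated cleanly to a computer check). Both approaches are finite verifications of the same fact; yours keeps everything internal to \(\Sym_5\), while the paper's leans on a documented external classification.

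For the existence direction your sketch is correct in spirit but thinner than the paper's. The paper does not appeal to a general principle that ``the commutator structure accommodates any prescribed target in \([G,G]\)''; instead it arranges, via \cref{lemm:AnyCycleByAProductInA5} or \cref{lemm:ProdsInD5Even221}, for \(\prod c_i\) to equal one specific element and then reads off a matching pair \((a_1,b_1)\) with \(\langle a_1,b_1\rangle=G\) from an explicit table (\cref{table:ChoiceOfAiBiWith5Comm}). Your general claim happens to be true for these particular groups, but it is not obvious a priori, so if you keep your formulation you should either verify it case by case or simply adopt the paper's tabular approach.
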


	\begin{proof}
		According to \cref{prop:ExistenceOfHolomorphicMaps}, the holomorphic map \(f\colon X\to Y\) with ramification data \((t_1,\ldots, t_n)\) exists if and only if the group \(G\) has a generating \((g;m_1,\ldots, m_n)\)-vector \linebreak \((a_1,b_1, \ldots a_{g},b_{g}, c_1, \ldots, c_n)\) such that \(c_i\) has cycle structure \(t_i\) for each \(i\in\{1,\ldots,n\}\). In particular, each \(c_i\) belongs to \(G\); therefore, \(G\) contains permutations of each type \(t_i\).  We will separately prove \cref{item:GPN=0,item:GP32or2111,item:GP311and41,item:GPS5orGA,item:GPS5orA5,item:GP221,item:GPonly5} by assuming the existence of such a generating vector. After the proof of each item, we show the existence of a map \(f\) with the stated ramification data and monodromy group; usually, by explicitly constructing a generating vector with suitable properties. In the cases where there is only one possible monodromy group \(G\) up to conjugacy, the map \(f\) is given by \cref{theo:RealizablePermutationGP}; hence, the existence proof is omitted. Similarly, when \(G = \Sym_5\), existence of \(f\) is given by \cref{coro:SymmetricIsRealizable}, so the existence proof is also omitted.

		\textsc{\Cref{item:GPN=0}.} If \(g = 1\), then \(\fg(Y,y)\) is abelian; hence, the monodromy group \(G\) is also abelian. Therefore, by \cref{prop:PossibleMonodromy}, it must be conjugate to \(\Cyc_5\).

		\textsc{\Cref{item:GP32or2111}.} \Cref{table:CycleStructure} shows that the only transitive subgroup of \(\Sym_5\) that contains permutations of type \([3,2]\) or \([2,1,1,1]\) is \(\Sym_5\); hence \(G = \Sym_5\).

		\textsc{\Cref{item:GP311and41}.} The only group in \cref{table:CycleStructure} that contains permutations with both cycle structures, \([3,1,1]\) and \([4,1]\), is \(\Sym_5\); hence \(G = \Sym_5\).

		\textsc{\Cref{item:GPS5orGA}.} As in the two above items, according to \cref{table:CycleStructure}, we have \(G = \Sym_5\) or \(G \cong \GA\). For existence in the case where \(G \cong \GA\), we assume (modulo conjugation) that \(G = \GA\) and that \(c_1\) and \(c_2\) have cycle structure \([4,1]\). 
        Set \(c_i\) as an arbitrary permutation of type \(t_i\) in \(\GA\) for each \(i \in\{3,\ldots,n\}\); since \(\GA \cap \Alt_5 = \Dih_5\), we have that \(\prod_{i = 3}^n c_i\) equals, modulo conjugation, to \(\Id\), \((1\,4)(2\,3)\) or \((1\,2\,3\,4\,5)\). Set \(c_1\) and \(c_2\) as the permutations corresponding to the affine maps \(2x\) and \(3x\); \(2x\) and \(2x\); or \(2x+1\) and \(3x\) according to each possible \(\prod_{i = 3}^n c_i\) mentioned above. In this manner \(\prod_{i=1}^{n} c_i = \Id\) and \(\GA\) has the generating \((g;m_1,\ldots,m_n)\)-vector
		\(((1\,2\,3\,4\,5),\Id_{\times (2g - 1)},\allowbreak c_1, \ldots, c_n)\).

		\textsc{\Cref{item:GPS5orA5}.} According to \cref{table:CycleStructure}, we have \(G = \Sym_5\) or \(G = \Alt_5\).

		Now we prove existence in the case where \(G = \Alt_5\). By \cref{lemm:AnyCycleByAProductInA5}, if \(n \geq 2\) we can choose a permutation \(c_i \in \Alt_5\) of type \(t_i\) for each \(i \in \{1,\ldots, n\}\) such that \(\prod_{i=1}^{n} c_i\) is of type \([3,1,1]\); since there is at least one permutation of type \([3,1,1]\), we can still chose the \(c_i\) in the above manner if \(n = 1\). Without loss of generality, suppose \(\prod_{i=1}^{n} c_i = (1\,3\,2)\). Since \(\angp{(1\,5\,2\,3\,4),(1\,3\,4\,2\,5)} = \Alt_5\) 
        and \([(1\,5\,2\,3\,4),(1\,3\,4\,2\,5)] = (1\,2\,3)\), the group \(\Alt_5\) has the suitable generating \((g;m_1,\ldots,m_n)\)-vector
		\(
		((1\,5\,2\,3\,4), (1\,3\,4\,2\,5),\Id_{\times (2g - 2)},c_1,\ldots,c_n).
		\)

		\textsc{\Cref{item:GP221}.} According to \cref{table:CycleStructure}, the monodromy group \(G\) could be isomorphic to any transitive subgroup of \(\Sym_5\) but \(\Cyc_5\), which implies \cref{item:GP221c}. 
        Suppose that there is an odd number of branch values of type \([2,2,1]\) and \(G\) is \(\Dih_5\) or \(\GA\) modulo conjugacy. 
        Let \((a_1,b_1,\ldots,a_g, b_{g}, c_1,\dots,c_n)\) be a generating vector of \(G\); thereby, the permutation \(c_i\) is of type \([5]\) or \([2,2,1]\) for each \(i\in \{1,\ldots,n\}\). 
        \Cref{lemm:ProdsInD5} and \(\GA \cap \Alt_5 = \Dih_5\) implies that \(\prod_{i=1}^{n} c_i\) is of type \([2,2,1]\). On the other hand, we have \(\GA' = \Dih_5' = \Cyc_5\); hence, by \cref{eq:genVector}, there is a permutation of type \([2,2,1]\) in \(\Cyc_5\), which contradicts \cref{table:CycleStructure}. That contradiction proves \cref{item:GP221b}. 
        If \(n=1\), \(t_1 = [2,2,1]\) and \(g = 1\), then Riemann--Hurwitz formula implies that \(g_{\hat X} = 16\); however, according to the classification of group actions on surfaces of  low genus (up to \(48\)) in \cite{book:Breuer}*{chapter~5} implemented in GAP (see \cite{software:GAP}), there is no action of \(\Alt_5\) with signature \((1;2)\) on a Riemann surface of genus~\(16\). That proves \cref{item:GP221a}.

		Existence for \cref{item:GP221b} when \(g\geq 2\) is granted by the generating \((g;2)\)-vector
		\(
		((1\,3\,4\,2\,5),\allowbreak (1\,5\,2\,3\,4),\allowbreak (1\,3\,2\,4\,5),\allowbreak (1\,5\,4\,3\,2),\allowbreak \Id_{\times (2g - 4)}, (1\,2)(3\,4))
		\); if \(g = 1\) and \(n > 1\), then, according to \cref{lemm:AnyCycleByAProductInA5}, we can choose a permutation \(c_i\) in \(\Alt_5\) of type \(t_i\) for each \(i\in \{1,\ldots, n\}\) such that \(\prod_{i=1}^n c_i = (1\,3\,2)\); thereby, we get the generating \((1;m_1,\ldots,m_n)\)-vector \(
		((1\,4\,3\,2\,5), (1\,5\,2\,4\,3),c_1,\ldots,c_n)\).

		For existence in the case of \cref{item:GP221c}, by \cref{lemm:ProdsInD5Even221}, we can set \(c_i\) as a permutation of type \(t_i\) in \(\Dih_5\) for each \(i\in\{1,\ldots,n\}\) such that \(\prod_{i=1}^n c_i = (1\,5\,4\,3\,2)\). Set \(a_1\) and \(b_1\) according to \(G\) as given by \cref{table:ChoiceOfAiBiWith5Comm}, also set \(a_i \coloneqq \Id\) and \(b_i \coloneqq \Id\) for \(i\in\{2,\ldots,g\}\); so \(\prod_{i=1}^{g}[a_i,b_i] = [a_1,b_1] = (1\,2\,3\,4\,5)\) and \(\angp{a_1,b_1} = G\) in each case.
		\begin{table}
			\caption{Choice of permutations with commutator of type \([5]\) that generate \(\Alt_5\), \(\GA\) or \(\Dih_5\)}\label{table:ChoiceOfAiBiWith5Comm}
			\begin{tabular}{@{} *4{>{\(} l <{\)}} @{}}\toprule
				G	& a_1	& b_1	& [a_1,b_1]	 \\\midrule
				\Alt_5	& (1\,2\,5\,3\,4)	& (1\,5\,3\,2\,4) & (1\,2\,3\,4\,5)\\
				\GA		& (1\,3\,2\,5)		& (3\,2\,4\,5)	& (1\,2\,3\,4\,5)\\
				\Dih_5	& (1\,3\,5\,2\,4)	& (1\,2)(3\,5)	& (1\,2\,3\,4\,5)\\\bottomrule
			\end{tabular}
		\end{table}

		\textsc{\Cref{item:GPonly5}.} Suppose that \(f\) has only one branch value, of type \([5]\), and \(G = \Cyc_5\). \Cref{eq:genVector} yields that there is a permutation with cycle structure \([5]\) in \(\Cyc_5' = \{\Id\}\), a contradiction. That contradiction implies \cref{item:GPonly5a}.

		For existence in the case where \(G = \Cyc_5\), according to \cref{lemm:ProdsInD5Even221}, we can choose a permutation \(c_i\) of type \([5]\) in \(\Cyc_5\) for each \(i\in\{2,\ldots,n\}\) such that \(\prod_{i = 2}^n c_i = (1\,2\,3\,4\,5)\). Set \(c_1\coloneqq (1\,5\,4\,3\,2)\) and each \(a_i\) and \(b_i\) as \(\Id\); thereby, we get a generating \((g;5_{\times n})\)-vector of \(\Cyc_5\). When \(G\) is \(\Dih_5\), \(\GA\) or \(\Alt_5\), according to \cref{lemm:ProdsInD5Even221} again, we can choose a permutation \(c_i\) of type \([5]\) in \(\Cyc_5\) for each \(i\in\{2,\ldots,n\}\) such that \(\prod_{i = 2}^n c_i = (1\,5\,4\,3\,2)\). Set \(a_1\) and \(b_1\) according to \(G\) as given by \cref{table:ChoiceOfAiBiWith5Comm}, also set \(a_i \coloneqq \Id\) and \(b_i \coloneqq \Id\) for \(i\in\{2,\ldots,g\}\); so \(\prod_{i=1}^{g}[a_i,b_i] = (1\,2\,3\,4\,5)\) and \(\angp{a_1,b_1} = G\) in each case.
	\end{proof}

	Now we determine each possible \(G\) according to the ramification of \(f\) in the special case where \(Y \cong \PP^1\).

	\begin{theo}\label{theo:ClassificationG0}
		Consider \(5\)-fold covering \(f\colon X\to~\PP^1\) with ramification data \((t_1,\ldots, t_n)\). The following statements hold:
		\begin{enumerate}
			\item\label{item:G032or2111} If at least one \(t_i\) equals \([2,1,1,1]\) or \([3,2]\), then \(G = \Sym_5\).

			\item\label{item:G0311and41} If there are types \([3,1,1]\) and \([4,1]\) in \((t_1,\ldots,t_n)\), then \(G= \Sym_5\).

			\item\label{item:G0S5orGA} Suppose there are no types \([3,2]\), \([3,1,1]\) or \([2,1,1,1]\) in \((t_1,\ldots, t_n)\), but there is at least one type \([4,1]\). We have:
			\begin{enumerate}
				\item\label{item:G0S5orGAa} If \(\deg(t_1,\ldots,t_n) = 8\), then \(G \cong \GA\).
				\item\label{item:G0S5orGAb} If \(\deg(t_1,\ldots,t_n) > 8\), then \(G \cong \GA\) or \(G = \Sym_5\).
			\end{enumerate}

			\item\label{item:G0A5} If \(f\) has only even branch values and at least one of them is of type \([3,1,1]\), then \(G = \Alt_5\).

			\item\label{item:G0221} Suppose that there are neither odd nor \([3,1,1]\) types in \((t_1,\ldots, t_n)\), but there is at least one type \([2,2,1]\). We have three cases:
			\begin{enumerate}
				\item\label{item:G0221a} If \(\deg(t_1,\ldots,t_n) = 8\), then \(G\cong \Dih_5\).

				\item\label{item:G0221b} If there is an odd number of branch values of type \([2,2,1]\), then \(G = \Alt_5\).

				\item\label{item:G0221c} If \(\deg(t_1,\ldots,t_n) > 8\) and there is an even number of branch values of type \([2,2,1]\), then \(G \cong \Dih_5\) or \(G = \Alt_5\).
			\end{enumerate}

			\item\label{item:G0only5} Suppose that \(t_i = [5]\) for each \(i\in \{1,\ldots,n\}\). If \(n = 2\), then \(G \cong \Cyc_5\); otherwise, we have \(G \cong \Cyc_5\) or \(G = \Alt_5\).
		\end{enumerate}

		Conversely, for any realizable tuple \((t_1,\ldots,t_n)\) of ramification types and for each possible \(G\) listed in \cref{item:G032or2111,item:G0311and41,item:G0S5orGA,item:G0A5,item:G0221,item:G0only5}, there is a \(5\)-fold covering with that ramification data and that monodromy group.
	\end{theo}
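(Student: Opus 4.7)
The plan is to follow the pattern of \cref{theo:ClassificationGP}, using \cref{prop:ExistenceOfHolomorphicMaps} to translate the problem into exhibiting a generating \((0;m_1,\ldots,m_n)\)-vector \((c_1,\ldots,c_n)\) of a transitive subgroup of \(\Sym_5\) with each \(c_i\) of cycle type \(t_i\). The two substantive differences from the \(g\geq 1\) case are the absence of commutator factors (so the relation \(\prod c_i = \Id\) from \cref{eq:genVector} is now rigid, rather than absorbable by \(\prod[a_i,b_i]\)) and the Riemann--Hurwitz degree bound \(\deg(t_1,\ldots,t_n)\geq 8\) supplied by \cref{theo:RealizablePermutationG0}.

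The easier items dispatch essentially as in \cref{theo:ClassificationGP}. \cref{item:G032or2111,item:G0311and41} are immediate from \cref{table:CycleStructure}. For \cref{item:G0A5}, the absence of odd types forces \(G\subseteq \Alt_5\), and \([3,1,1]\in G\) then singles out \(\Alt_5\) among the transitive subgroups of \(\Alt_5\). For \cref{item:G0221b} the same argument yields \(G\in\{\Dih_5,\Alt_5\}\), and \cref{lemm:ProdsInD5} rules out \(\Dih_5\) when the number of \([2,2,1]\) types is odd. For \cref{item:G0only5}, when \(n=2\) the relation \(c_2 = c_1^{-1}\) forces \(G=\Cyc_5\); for \(n\geq 3\), since \(G\) is generated by 5-cycles we have \(G\subseteq\Alt_5\), and the only transitive subgroups of \(\Alt_5\) generated by 5-cycles are \(\Cyc_5\) and \(\Alt_5\), because the 5-cycles of \(\Dih_5\) all lie in its unique Sylow \(5\)-subgroup.

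The main obstacle is ruling out the larger candidate in the boundary cases \cref{item:G0S5orGAa,item:G0221a}, where \(\deg=8\). A short enumeration of the admissible tuples reduces these to the signatures \((0;4,4,2)\) (from \(([4,1],[4,1],[2,2,1])\)), \((0;5,2,2)\) (from \(([5],[2,2,1],[2,2,1])\)), and \((0;2,2,2,2)\) (from four copies of \([2,2,1]\)). For each I would apply Riemann--Hurwitz to the candidate group: \(\Sym_5\) with \((0;4,4,2)\) and \(\Alt_5\) with \((0;2,2,2,2)\) both force \(g_{\hat X}=1\), which is impossible since neither group admits a faithful action on a curve of genus \(1\) (any such action would realize the group as an extension of an abelian translation subgroup by a cyclic subgroup of order at most \(6\), contradicting the non-solvability of \(\Alt_5\) and \(\Sym_5\)); and \(\Alt_5\) with \((0;5,2,2)\) yields a strictly negative Riemann--Hurwitz contribution, which is absurd. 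This pins down \(\GA\) and \(\Dih_5\) as the unique remaining options.

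For the existence (converse) direction, for each remaining \((G,(t_1,\ldots,t_n))\) pair I would build an explicit generating vector by the splitting device of \cref{theo:RealizablePermutationG0}: partition the tuple into two even sub-tuples of degree \(\geq 4\) and apply \cref{lemm:Type5CycleAsProduct}, or its \(\Alt_5\)-analog \cref{lemm:AnyCycleByAProductInA5} when \(G=\Alt_5\), or its \(\Dih_5\)-analog \cref{lemm:ProdsInD5Even221} when \(G=\Dih_5\), to produce products \((1\,2\,3\,4\,5)\) and \((1\,5\,4\,3\,2)\) whose composition is \(\Id\). In \cref{item:G0S5orGAa} an explicit triple in \(\GA\) of orders \(4,4,2\) whose product is \(\Id\) suffices; and for the \(\Alt_5\) subcase of \cref{item:G0only5} one verifies that three 5-cycles with product \(\Id\) generating \(\Alt_5\) exist, for instance \((1\,2\,3\,4\,5)(1\,4\,5\,2\,3)(1\,4\,2\,3\,5) = \Id\), extending to larger \(n\) by appending pairs of mutually inverse 5-cycles. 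The remaining bookkeeping is a routine case analysis analogous to the tables in \cref{theo:RealizablePermutationG0}.
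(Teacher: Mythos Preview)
Your plan is correct and matches the paper's proof essentially step for step: the forward direction uses \cref{table:CycleStructure} together with \cref{lemm:ProdsInD5}, and the degree-\(8\) boundary cases \cref{item:G0S5orGAa,item:G0221a} are handled exactly as you describe, by computing \(g_{\hat X}\) via Riemann--Hurwitz and invoking the solvability of \(\Aut\) in genus~\(1\); the existence direction likewise relies on \cref{lemm:Type5CycleAsProduct,lemm:AnyCycleByAProductInA5,lemm:ProdsInD5Even221} with explicit choices to force the generated group to be exactly \(G\). One small slip: under the paper's left-to-right composition convention your sample triple \((1\,2\,3\,4\,5)(1\,4\,5\,2\,3)(1\,4\,2\,3\,5)\) is not the identity (the paper uses \(c_1=(1\,3\,2\,5\,4)\), \(c_2=(1\,3\,5\,4\,2)\) instead), and in the existence constructions you should make explicit, as the paper does, the extra check that the chosen \(c_i\) generate all of \(G\) rather than a proper transitive subgroup.
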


	\begin{proof}
		We proceed in the same manner as in the proof of \cref{theo:ClassificationGP}.

		\textsc{\Cref{item:G032or2111}.} \Cref{table:CycleStructure} shows that the only transitive subgroup of \(\Sym_5\) that contains permutations of type \([3,2]\) or \([2,1,1,1]\) is \(\Sym_5\) itself; hence \(G = \Sym_5\).

		\textsc{\Cref{item:G0311and41}.} Same as \cref{item:G032or2111}.

		\textsc{\Cref{item:G0S5orGA}.} \Cref{table:CycleStructure} shows that the only transitive subgroups of \(\Sym_5\) that contain permutations of type \([4,1]\) are \(\GA\) and \(\Sym_5\); hence \(G \cong \GA\) or \(G = \Sym_5\), which proves \cref{item:G0S5orGAb}. Now suppose that \(\deg(t_1,\ldots,t_n) = 8\), since \((t_1,\ldots,t_n)\) is even, there must be at least two types \([4,1]\), say \(t_1\) and \(t_2\), so \(n = 3\) and \(\deg(t_3) = 2\); hence, Riemann--Hurwitz formula implies that \(g_{\hat X} = 1\). Since the full automorphism group of a surface of genus \(1\) is isomorphic to \(\ZZ^2 \rtimes \Cyc_k\), where \(k \in \{2,4,6\}\) (see \cite{book:Breuer}*{subsection~3.4}), which is a solvable group, the non-solvable group \(\Sym_5\) cannot act on \(\hat X\). That proves \cref{item:G0S5orGAa}.

		Since now there two possibilities for \(G\), we give separate existence proofs.
        
		\textsc{Case \(G = \Sym_5\).} Assume \(t_1 = t_2 = [4,1]\). Since \((t_3,\ldots, t_n)\) is also even and \(\deg(t_3,\ldots,t_n) \geq 4\), according to \cref{lemm:Type5CycleAsProduct}, we can choose a permutation \(c_i\) of type \(t_i\) for each \(i\in\{3,\ldots, n\}\) such that \(\prod_{i = 3}^n c_i = (1\,5\,4\,3\,2)\). Set \(c_1\coloneqq (1\,2\,5\,4)\) and \(c_2 \coloneqq (2\,5\,3\,4)\), then \(\prod_{i = 1}^n c_i = \Id\); also, since \(c_2^{-1}c_1 = (1\,5\,4)\) and the only transitive subgroup of \(\Sym_5\) that contains \([3,1,1]\) and \([4,1]\) elements is \(\Sym_5\) itself, we have \(\angp{c_1,\ldots,c_n} = \Sym_5\).

		\textsc{Case \(G \cong \GA\).} Set \(c_i\) as any permutation of type \(t_i\) in \(\GA\) for \(i \in\{3,\ldots, n\}\). The product \(\prod_{i = 3}^{n} c_i\) must be of type \([2,2,1]\) or \([5]\) because it is even and an element of \(\GA\); without loss of generality, assume that it equals \((1\,4)(2\,3)\) or \((1\,5\,4\,3\,2)\). Set
		\[
		(c_1,c_2) \coloneqq \begin{cases}
			((1\,3\,2\,5), (1\,4\,5\,2)) & \text{if \(\prod_{i = 3}^{n} c_i = (1\,4)(2\,3)\),}\\
			((2\,4\,5\,3),(1\,3\,2\,5)) & \text{if \(\prod_{i = 3}^{n} c_i = (1\,2\,3\,4\,5)\).}
		\end{cases}
		\]
		Since \(\angp{c_1,c_2}\) generates a transitive subgroup of \(\Sym_5\) contained in \(\GA\) and with type \([4,1]\) permutations, we have \(\angp{c_1,\ldots,c_n} = \GA\); so \((c_1,\ldots,c_n)\) is a suitable generating \((0;4,4,t_3,\ldots,t_n)\)-vector of \(\GA\).

		\textsc{\Cref{item:G0A5}.} According to \cref{table:CycleStructure}, the possible monodromy groups \(G\) that contain permutations of type \([3,1,1]\) are \(\Sym_5\) and \(\Alt_5\); but every even permutation in \(\Sym_5\) is contained in \(\Alt_5\), so there are not suitable generating vectors for \(\Sym_5\) in this case. Hence \(G = \Alt_5\).

		\textsc{\Cref{item:G0221}.} According to \cref{table:CycleStructure}, \(G\) could be conjugate to any transitive subgroup of \(\Sym_5\) but \(\Cyc_5\); however, neither \(\Sym_5\) nor \(\GA\) are generated by their even permutations because all of them are contained in \(\Alt_5\) and \(\Dih_5\), respectively. Hence \(G = \Alt_5\) or \(G \cong \Dih_5\).

		Now suppose \(\deg(t_1,\ldots,t_n) = 8\), then, maybe after a re-enumeration, there are two possibilities:
		\begin{itemize}
			\item \(n=4\) and \(t_1 = t_2 = t_3 = t_4 = [2,2,1]\); or
			\item \(n=3\), \(t_1 = t_2 = [2,2,1]\) and \(t_3 = [5]\).
		\end{itemize}
		So \(\hat f\) has signature \((0;2,2,2,2)\) or \((0;2,2,5)\). If \(G = \Alt_5\), Riemann--Hurwitz formula yields \(g_{\hat X} = 1\) for signature \((0;2,2,2,2)\) and \(g_{\hat X} = -7\) for signature \((0;2,2,5)\); the second case is clearly not possible. Moreover, recall from the proof of \cref{item:G0S5orGA} that the full automorphism group of a surface of genus \(1\) is a solvable group; therefore, the non-solvable group \(\Alt_5\) cannot be a subgroup of it. That proves \cref{item:G0221a}.

		\Cref{lemm:ProdsInD5} states that a product \(\prod_{i = 1}^n c_i\) of permutations in \(\Dih_5\) with an odd number of type \([2,2,1]\) factors is of type \([2,2,1]\); that proves \cref{item:G0221b}.

		If \(\deg(t_1,\ldots,t_n) > 8\) and there is an even number of types \([2,2,1]\) in \((t_1,\ldots,t_n)\), both \(\Alt_5\) and \(\Dih_5\) are possible monodromy groups, which proves \cref{item:G0221c}. Since in the latter case there are two possibilities for \(G\), we prove existence in both cases separately.

		\textsc{Case \(G = \Alt_5\).} If there is a type \([5]\) in \((t_1,\ldots,t_n)\), say \(t_1\), then, modulo re-enumeration of \((t_1,\ldots,t_n)\), we have \(t_2 = t_3 = [2,2,1]\) and \(\deg(t_4,\ldots,t_n) \geq 4\). According to \cref{lemm:Type5CycleAsProduct}, we can choose a permutation \(c_i\) in \(\Alt_5\) of type \(t_i\) for each \(i\in \{4,\ldots,n\}\) such that \(\prod_{i=4}^n c_i = (1\,5\,4\,3\,2)\); set \(c_1\coloneqq (1\,4\,5\,3\,2)\), \(c_2\coloneqq (1\,2)(3\,5)\) and \(c_3 \coloneqq (1\,5)(2\,3)\), so \(\prod_{i = 1}^n c_i = \Id\). Also, note that \(c_1c_2 = (2\,4\,5)\), hence \(\angp{c_1,\ldots,c_n}\) contains only even permutations and of every possible type, so \(\angp{c_1,\ldots, c_n} = \Alt_5\). Hence \((c_1,\ldots,c_n)\) is a suitable generating \((0;5,2,2,m_4,\ldots,m_n)\)-vector of \(\Alt_5\).

		If there are no types \([5]\) in \((t_1,\ldots,t_n)\), then \(t_i = [2,2,1]\) for each \(i\in\{1,\ldots,n\}\), and \(n \geq 6\). By \cref{lemm:Type5CycleAsProduct}, we can choose a permutation \(c_i\) of type \([2,2,1]\) for each \(i\in \{5,\ldots,n\}\) such that \(\prod_{i=5}^n c_i = (1\,5\,4\,3\,2)\); set \(c_1 \coloneqq (1\,5)(2\,3)\), \(c_2 \coloneqq (1\,4)(2\,5)\), \(c_3 \coloneqq (1\,2)(3\,5)\) and \(c_4 \coloneqq (1\,5)(2\,3)\), so \(\prod_{i = 1}^n = \Id\). We also have that \(c_1c_2c_3 = (2\,4\,5)\), hence \(\angp{c_1,\ldots,c_n} = \Alt_5\). Therefore \((c_1,\ldots,c_n)\) is a generating \((0;2,\ldots,2)\)-vector for \(\Alt_5\).

		\textsc{Case \(G \cong \Dih_5\).} There are at least two types \([2,2,1]\) in \((t_1,\ldots,t_n)\), say \(t_1\) and \(t_2\). By \cref{lemm:ProdsInD5Even221} we can choose a permutation \(c_i\) of type \(t_i\) in \(\Dih_5\) for each \(i\in \{3,\ldots,n\}\) such that \(\prod_{i=3}^{n} c_i = (1\,5\,4\,3\,2)\). Set \(c_1 \coloneqq (1\,3)(4\,5)\) and \(c_2 \coloneqq (1\,2)(3\,5)\) (both permutations belong to \(\Dih_5\)), so \(\prod_{i=1}^{n} c_i = \Id\) and \(\angp{c_1,\ldots,c_n} = \Dih_5\). Hence \((c_1,\ldots,c_n)\) is a generating \((0;2,2,m_3,\ldots,m_n)\)-vector of \(\Dih_5\).

		\textsc{\Cref{item:G0only5}.} Note that each type \([5]\) permutation in \(\Dih_5\) or \(\GA\) actually belongs to \(\Cyc_5\), so neither \(\Dih_5\) nor \(\GA\) can be generated just by elements of type \([5]\); also, each type \([5]\) permutation in \(\Sym_5\) is contained in \(\Alt_5\), so \(\Sym_5\) cannot be generated by elements of type \([5]\). Therefore, the monodromy group \(G\) must be conjugate to \(\Cyc_5\) or \(\Alt_5\). If \(n=2\), then \(G\) is generated by a single permutation of type \([5]\), hence \(G \cong \Cyc_5\). That proves \cref{item:G0only5}. ow we give an existence proof.

		\textsc{Case \(G \cong \Cyc_5\).} According to \cref{lemm:ProdsInD5Even221}, we can choose a permutation \(c_i\) of type \([5]\) in \(\Cyc_5\) for each \(i\in\{2,\ldots,n\}\) such that \(\prod_{i = 1}^n c_i = (1\,5\,4\,3\,2)\); set \(c_1 \coloneqq (1\,2\,3\,4\,5)\), so \(\prod_{i=1}^{n} c_i = \Id\) and \(\angp{c_1,\ldots, c_n} = \Cyc_5\). Therefore, \((c_1,\ldots,c_n)\) is a generating \((0;5,\ldots,5)\)-vector of \(\Cyc_5\).

		\textsc{Case \(G = \Alt_5\).} According to \cref{lemm:Type5CycleAsProduct}, we can choose a permutation \(c_i\) of type \([5]\) for each \(i\in\{3,\ldots,n\}\) such that \(\prod_{i = 3}^n c_i = (1\,5\,4\,3\,2)\). Set \(c_1 \coloneqq (1\,3\,2\,5\,4)\) and \(c_2 \coloneqq (1\,3\,5\,4\,2)\), so \(\prod_{i=1}^{n}c_i = \Id\). Note that \(c_1^2c_2^2 = (2\,5\,4)\), hence \(\angp{c_1,\ldots,c_n} = \Alt_5\). Hence \((c_1,\ldots,c_n)\) is a generating \((0;5,\ldots,5)\)-vector of \(\Alt_5\).
	\end{proof}

	\section{Decomposition of the Jacobian of the Galois closure}\label{sec:DecompositionOfJacobian}

	In this section we give, up to isogeny, the group algebra decomposition of \(\J(\hat X)\) in terms of the Jacobian and Prym varieties associated to the intermediate coverings of \(\hat f\) for each possible monodromy group \(G\) given in \cref{theo:ClassificationGP,theo:ClassificationG0}.

	\subsection{Cyclic monodromy group}

	Suppose that \(G \cong \Cyc_5\). There are just two rational irreducible representations of \(\Cyc_5\): one of degree~\(1\), the trivial representation, and one of degree~\(4\), the restriction of the \emph{standard representation} (see \cite{book:FultonHarris}*{p.~27}); both representations have Schur index~\(1\). 
%

	Moreover, since a complex irreducible representation Galois associated to \(V\) is of degree~\(1\) (see \cite{book:Serre}*{section~5.1}), \cref{theo:GroupAlgebraDecomposition} implies that the group algebra decomposition of \(\J(X)\) is of the form \(\J(X) \sim A_1 \times A_2\).

	Besides, according to \cref{theo:ClassificationG0,theo:ClassificationGP}, the map \(f\) is unramified or \(R_f = \sum_{j=1}^{n} 4p_j\) with \(n \geq 2\); the former case is only possible if \(g \geq 1\). Also, note that in this case \(f\) is already Galois because \(\ord{G} = \deg f\); since \(5\) is prime, the map \(f\) has not intermediate coverings and it is cyclic.

	\begin{theo}\label{theo:CaseC5}
		If \(G \cong \Cyc_5\), then \(\J(\hat X) \sim \J(Y) \times \Prym(f)\)
		is the group algebra decomposition of \(\J(\hat X)\). We have \(\dim \Prym(f) = 4g - 4 + 2n\); moreover, the polarization induced on \(\J(Y)\times \Prym(f)\) is of type \((1_{\times (3g - 3)}, 5_{\times (2g - 1)})\) if \(f\) is étale, and of type \((1_{\times (3g - 4 + 2n)}, 5_{\times 2g})\) otherwise.
	\end{theo}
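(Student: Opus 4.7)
The plan is to carry out the group algebra decomposition of \(\J(\hat X)\) explicitly, then read off the dimension and the polarization type from the Riemann--Hurwitz formula together with \cref{theo:PrymAndPolarizations} and \cref{prop:CyclicEtaleFactor}.

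First, since \(\ord{G} = 5 = \deg f\), the covering \(f\) is already Galois, so \(\hat X = X\) and \(\hat f = f\); moreover, because \(5\) is prime, \(f\) has no non-trivial intermediate coverings. The group \(\Cyc_5\) has exactly two rational irreducible representations: the trivial one \(W_1\) and a \(4\)-dimensional representation \(W_2\) whose complexification is the sum of the four non-trivial one-dimensional complex characters, all Galois conjugate over \(\QQ\). In particular \(\dim V_2 = 1\) and \(m_{V_2} = 1\), so \(l_2 = 1\). By \cref{theo:GroupAlgebraDecomposition}, \(\J(X) \sim B_1 \times B_2\) with \(B_1 \sim \J(Y)\). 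To identify \(B_2\), I would apply \cref{prop:GAComponentAsPrym} with \(H = \{\Id\}\) and \(N = G\): the induced representation \(\rho_{\{\Id\}}\) is the regular representation \(1_G \oplus W_2 = \rho_G \oplus W_2\), and the corresponding covering \(\hat X/\{\Id\} \to \hat X/G\) is \(f\) itself, so \(\Prym(f) \sim B_2\) and \(\J(\hat X) \sim \J(Y) \times \Prym(f)\) is the asserted decomposition.

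The dimension follows from Riemann--Hurwitz: if \(f\) is étale then \(g_X = 5g - 4\); if \(f\) is ramified every branch value has type \([5]\), so \(\deg R_f = 4n\) and \(g_X = 5g - 4 + 2n\); either way \(\dim \Prym(f) = g_X - g = 4g - 4 + 2n\).

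For the polarization on \(\J(Y) \times \Prym(f)\), \cref{theo:PrymAndPolarizations} gives that \((f^*)^*\Theta_{f^*\J(Y)} = \Theta_Y^{\otimes 5}\), which has type \((5_{\times g})\), and that \(\ker \Theta_{\Prym(f)} = \ker \Theta_{f^*\J(Y)}\). When \(f\) is étale, \cref{prop:CyclicEtaleFactor} yields \(K \coloneqq \ker f^* \cong \Cyc_5\), an isotropic cyclic subgroup of \(\J(Y)[5]\) for the Weil pairing, so the kernel reduces to \(K^{\perp}/K \cong (\ZZ/5)^{2g-2}\); matching this order against \(\dim \Prym(f) = 4g - 4\) forces \(\Theta_{\Prym(f)}\) to have type \((1_{\times(3g-3)}, 5_{\times(g-1)})\), which combined with \((5_{\times g})\) on \(\J(Y)\) gives \((1_{\times(3g-3)}, 5_{\times(2g-1)})\). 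When \(f\) is ramified, \(\ker f^* = 0\) by \cref{prop:CyclicEtaleFactor}, so \(\ker \Theta_{f^*\J(Y)} \cong \J(Y)[5] \cong (\ZZ/5)^{2g}\) and the same matching gives \(\Theta_{\Prym(f)}\) of type \((1_{\times(3g-4+2n)}, 5_{\times g})\), hence \((1_{\times(3g-4+2n)}, 5_{\times 2g})\) on the product. The main obstacle will be showing that \(\ker \Theta_{\Prym(f)}\) is \(5\)-elementary in the étale case, ruling out cyclic summands of order \(25\) or higher; the Weil-pairing reduction \(K^{\perp}/K\) settles this, but must be set up carefully, verifying both that \(K\) is isotropic and that the reduction indeed represents the kernel of the descended polarization. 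Once this is done, the elementary divisor types are pinned down by dimension and order alone.
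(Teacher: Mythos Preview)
Your proposal is correct and follows the same route as the paper: apply \cref{prop:GAComponentAsPrym} with \(H=\{\Id\}\) and \(N=\Cyc_5\), then use Riemann--Hurwitz for the dimension and \cref{theo:PrymAndPolarizations} together with \cref{prop:CyclicEtaleFactor} for the polarization types. Your anticipated obstacle is not really one: since \((\ker f^*)^{\perp}/\ker f^*\) is a subquotient of \(\J(Y)[5]\), it is automatically \(5\)-elementary, so no Weil-pairing analysis beyond what \cref{theo:PrymAndPolarizations} already supplies is needed.
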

	\begin{proof}
		Frobenius reciprocity implies \(\rho_{\{\Id\}} = U \oplus V\) and \(\rho_{\Cyc_5} = U\). Therefore, \cref{prop:GAComponentAsPrym} implies that \(\J(\hat X) \sim \J(Y) \times \Prym(f)\).

		From Riemann--Hurwitz formula, we get \(g_X = 5g - 4 + 2n\). Hence \(\dim \J(\hat X) = 5g - 4 + 2n\) and \(\dim \Prym(f) = 4g - 4 + 2n\).

		\Cref{prop:CyclicEtaleFactor} yields that
		\[
		\left|\ker f^* \right| = \begin{cases}
			5 &\mbox{if \(n = 0\)}, \\
			1 & \mbox{if \(n \geq 2\)}.
		\end{cases}
		\]
		By \cref{theo:PrymAndPolarizations}, the polarization \((f^*)^* \Theta_{f^* \J(Y)}\) is analytically equivalent to \(\Theta_Y^{\otimes 5}\), and hence type \((5_{\times g})\). Also,
		\[
		\ker(\Theta_{\Prym(f)}) = \ker(\Theta_{f^* \J(Y)}) \cong \dfrac{(\ker f^*)^\perp}{\ker f^*} \cong  \begin{cases}
			\Cyc_5^{2g - 2} &\mbox{if \(n = 0\)}, \\
			\Cyc_5^{2g} & \mbox{if \(n \geq 2\)};
		\end{cases}
		\]
		therefore,
		\[
		\type(\Theta_{\Prym(f)}) = \begin{cases}
			(1_{\times (3g - 3)}, 5_{\times (g - 1)}) & \text{if \(n = 0\),}\\
			(1_{\times (3g - 4 + 2n)}, 5_{\times g}) & \text{if \(n\geq 2\).}
		\end{cases}
		\]
		That implies the assertion on the type of the polarization.
	\end{proof}

	\subsection{Dihedral monodromy group}

	Suppose that \(G \cong \Dih_5\). We set \(\Cyc_5 \coloneqq \angp{(1\,2\,3\,4\,5)}\) and \(\Cyc_2 \coloneqq \angp{(2\,5)(3\,4)}\). The subgroup lattice of \(\Dih_5\) yields the following intermediate coverings of \(\hat f\):
	\begin{equation}\label{eq:D5Diagram}
		\begin{tikzcd}
			&[-1em] \{\Id\} \ar[dl,dash] \ar[dr,dash] &[-1em] & &[-3em] \hat X \ar[dl,"\pi_{\Cyc_2}"'] \ar[dr, "\pi_{\Cyc_5}"] &[-2em] \\
			\Cyc_2 \ar[dr,dash] && \Cyc_5 \ar[dl,dash]  & \hat X/\Cyc_2 \cong X \ar[dr,"\pi^{\Cyc_2}\cong f"'] && \hat X/\Cyc_5 \ar[dl,"\pi^{\Cyc_5}"]\\
			& \Dih_5 &&& \hat X/\Dih_5 \cong Y &
		\end{tikzcd}
	\end{equation}
	Since \(\Stab_{\Dih_5}(1) = \Cyc_2\), we have \(\hat X/\Cyc_2 \cong X\) and \(\pi^{\Cyc_2} \cong f\).

	According to \cref{theo:ClassificationG0,theo:ClassificationGP}, we have
	\begin{equation}\label{eq:RamificationDivD5}
		R_f = \sum_{j=1}^{n_1} 4 p_j + \sum_{j=1}^{n_2} (q_j + r_j),
	\end{equation}
	where \(n_2\) is even; also:
	\begin{itemize}
		\item  if \(g = 1\), then \(n_1\) and \(n_2\) cannot be both zero; and
		\item  if \(g = 0\), then  \(n_2 \geq 2\) and if \(n_1 = 0\), then \(n_2 \geq 4\).
	\end{itemize}
	\Cref{eq:RamificationDivD5} implies that the signature of \(\hat f\) is \((g; 2_{\times n_2}, 5_{\times n_1})\). The genera and total ramification of intermediate coverings in \cref{eq:D5Diagram} were computed through the Sage implementation of \cref{subsec:GaloisClosure} and are presented in \cref{table:TotalRamificationsD5}.
	\begin{table}
		\caption{Intermediate coverings of \(\hat f\) where \(G \cong \Dih_5\)}
		\label{table:TotalRamificationsD5}
		\begin{tabular}{@{} >{\(}l<{\)} *{3}{>{\(}r<{\)}} @{}}\toprule
			\multicolumn{1}{@{}l}{\(H\)} & \multicolumn{1}{l}{Genus of \(\hat X/H\)}& \multicolumn{1}{l}{\(\deg(R_{\pi_H})\)} & \multicolumn{1}{l}{\(\deg(R_{\pi^H})\)} \\
			\midrule
			\{\Id\} &  10g_Y + 4n_1 + 5n_2/2 - 9 & 0 			& 8 n_1 + 5n_2 \\
			\Cyc_2 & 	 5g_Y+2n_1+n_2-4 		& n_2 			& 4 n_1 + 2n_2 \\
			\Cyc_5 & 2g_Y+ n_2 /2-1				& 8n_1 			& n_2 \\
			\Dih_5 &  g_Y 						& 8n_1 + 5n_2 	& 0 \\
			\bottomrule
		\end{tabular}
	\end{table}

	According to \cite{book:Serre}*{section~5.3}, there are four complex irreducible representations of \(\Dih_5\): two of degree~\(1\), the trivial one denoted by \(U\) and a non-trivial one denoted by \(W\), and two of degree~\(2\), which are not rational, but its direct sum is; furthermore, it is the restriction of the standard representation, so we will denote it by \(V\). So the rational conjugacy classes of \(\Dih_5\) are also three: the class of the identity, the class of \((2\,5)(3\,4)\) and the class of \((1\,2\,3\,4\,5)\).
	Rational irreducible representations of \(\Dih_5\) satisfy the following properties:
	\begin{itemize}
		\item \(m_U = m_{W} = m_{V} = 1\);
		\item \(U\) and \(W\) are complex irreducible representations; and
		\item \(V\) is Galois associated to a degree~\(2\) complex irreducible representation.
	\end{itemize}
	Hence, by \cref{theo:GroupAlgebraDecomposition}, the group algebra decomposition of \(\J(\hat X)\) is of the form
	\begin{equation}\label{eq:D5DecompositionForm}
		\J(\hat X) \sim A_1 \times A_2 \times A_3^2.
	\end{equation}

	\begin{theo}\label{theo:CaseD5}
        If \(G \cong \Dih_5\), then
		\begin{equation}\label{eq:D5Decomposition}
			\J(\hat X) \sim \J(Y) \times \Prym\paren{\pi^{\Cyc_5}} \times  \Prym(f)^2
		\end{equation}
		is the group algebra decomposition of \(\J(\hat X)\), where \(\Dih_5\) acts trivially on \(\J(Y)\), and as multiples of \(W\) and \(V\) on \(\Prym\paren{\pi^{\Cyc_5}}\) and \(\Prym(f)^2\), respectively. The dimensions of the Prym varieties involved are:
		\begin{itemize}
			\item \(\dim \Prym\paren{\pi^{\Cyc_5}}  = g + n_2/2 - 1\)
			\item \(\dim \Prym(f) = 4g + 2n_1 + n_2 - 4\)
		\end{itemize}
		and the types of their polarization types are:
		\begin{itemize}
			\item \(\type \Theta_{\Prym(\pi^{\Cyc_5})} = \begin{cases}
				(2_{\times (g - 1)})	&\text{if \(n_2 = 0\),}\\
				(1_{\times (n_2/2 - 1)}, 2_{\times g})	&\text{if \(n_2 \geq 2\);}
			\end{cases}\)

			\vspace{1ex}

			\item \(\type \Theta_{\Prym(f)} = (1_{\times (3g + 2n_1 + n_2 - 4)}, 5_{\times g})\).
		\end{itemize}
	\end{theo}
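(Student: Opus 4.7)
The plan is to apply \cref{prop:GAComponentAsPrym} to the non-trivial rational irreducible representations of \(\Dih_5\) and then read off dimensions and polarization types from \cref{table:TotalRamificationsD5} via Riemann--Hurwitz and \cref{theo:PrymAndPolarizations}.

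First I would compute, using Frobenius reciprocity, the decomposition of each \(\rho_H\) for the representative subgroups \(\Cyc_5\) and \(\Cyc_2\). Since \(W\) is inflated from \(\Dih_5/\Cyc_5\) and each complex \(2\)-dimensional irreducible has trace zero on reflections, one obtains \(\rho_{\Cyc_5}=U\oplus W\) and \(\rho_{\Cyc_2}=U\oplus V\); with \(\rho_{\Dih_5}=U\) these rewrite as \(\rho_{\Cyc_5}=W\oplus\rho_{\Dih_5}\) and \(\rho_{\Cyc_2}=V\oplus\rho_{\Dih_5}\). \Cref{prop:GAComponentAsPrym} therefore identifies \(B_2\sim\Prym(\pi_{\Cyc_5}^{\Dih_5})=\Prym(\pi^{\Cyc_5})\) and \(B_3\sim\Prym(\pi_{\Cyc_2}^{\Dih_5})=\Prym(f)\); combining this with \(B_1\sim\J(Y)\) and the multiplicities already recorded in \eqref{eq:D5DecompositionForm} yields \eqref{eq:D5Decomposition}.

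The dimension formulas follow immediately by subtracting \(g\) from the genera of \(\hat X/\Cyc_5\) and \(X\) as recorded in \cref{table:TotalRamificationsD5}.

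The most delicate step, and thus the main obstacle, is the polarization type computation, which I would handle case by case through \cref{theo:PrymAndPolarizations}. For \(f\), since \(\Cyc_2\) is a maximal proper subgroup of \(\Dih_5\), the map \(f\) has no non-trivial intermediate covering and a fortiori no cyclic étale factor; \cref{prop:CyclicEtaleFactor} then forces \(\ker f^{*}=0\), so the kernel of the induced polarization on \(\Prym(f)\) is the full \(5\)-torsion \(\J(Y)[5]\cong\Cyc_5^{2g}\), producing the stated type \((1_{\times(3g+2n_1+n_2-4)},5_{\times g})\) once the dimension is taken into account. For the degree-\(2\) map \(\pi^{\Cyc_5}\), the subtle case is \(n_2=0\): here the cover is cyclic and étale, so \cref{prop:CyclicEtaleFactor} gives \(\ker(\pi^{\Cyc_5})^{*}\cong\Cyc_2\), and the non-degeneracy of the Weil pairing on \(\J(Y)[2]\) reduces \((\ker)^{\perp}/\ker\) to \(\Cyc_2^{2g-2}\), forcing the type \((2_{\times(g-1)})\). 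When \(n_2\geq 2\) the pullback is already injective, so the polarization kernel becomes the full \(2\)-torsion \(\Cyc_2^{2g}\) and the type is \((1_{\times(n_2/2-1)},2_{\times g})\). Matching these elementary-divisor data against the dimension of each Prym completes the argument.
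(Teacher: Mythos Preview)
Your proposal is correct and follows essentially the same route as the paper: Frobenius reciprocity to obtain \(\rho_{\Cyc_5}=U\oplus W\), \(\rho_{\Cyc_2}=U\oplus V\), then \cref{prop:GAComponentAsPrym} to identify the factors, dimensions from \cref{table:TotalRamificationsD5}, and \cref{prop:CyclicEtaleFactor} together with \cref{theo:PrymAndPolarizations} for the polarization types. The only slip is notational: you write \(\pi_{\Cyc_5}^{\Dih_5}\) where the paper's convention requires \(\pi_{\Dih_5}^{\Cyc_5}\), but since you immediately identify it with \(\pi^{\Cyc_5}\) this does not affect the argument.
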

	\begin{proof}
        Frobenius reciprocity implies that \(\rho_{\Cyc_5} = U \oplus W\), \(\rho_{\Cyc_2} = U \oplus V\) and \(\rho_{\Dih_5} = U\). Therefore, in the notation of \cref{eq:D5DecompositionForm}, \cref{prop:GAComponentAsPrym} implies that \(A_1 \sim \J(Y)\), \(A_2 \sim \Prym\paren{\pi^{\Cyc_5}}\) and \(A_3 \sim \Prym(f)\). The dimension of those Jacobian and Prym varieties are directly computed from the genera of the corresponding curves in \cref{table:TotalRamificationsD5}.

		According to \cref{table:TotalRamificationsD5}, we have that \(\pi^{\Cyc_5}\) is étale if and only if \(n_2 = 0\); besides, the map \(\pi^{\Cyc_5}\) is cyclic (it is of degree~\(2\)) whereas \(f\) is not even Galois, because \(\Cyc_2\) is not normal in \(\Dih_5\). Also, \cref{prop:IntermediateOfGalois} implies that neither \(\pi^{\Cyc_5}\) nor \(f\) factor non-trivially. Hence, by \cref{prop:CyclicEtaleFactor},
		\begin{equation*}
			\ord{\ker \pi^{\Cyc_5*}} = \begin{cases}
				2& \text{if \(n_2 = 0\),}\\
				1& \text{if \(n_2 \geq 2\),}
			\end{cases}
		\end{equation*}
		and \(\ord{\ker f^*} = 1\).	From \cref{item:DecompositionInJacAndPrym} of \cref{theo:PrymAndPolarizations} and \cref{table:TotalRamificationsD5}, we get the desired polarization types.
	\end{proof}


	\subsection{Affine monodromy group}

	Suppose that \(G \cong \GA\) as defined in \cref{prop:PossibleMonodromy}. Since \(\angp{(1\,2\,3\,4\,5)}\) and \(\angp{(2\,3\,5\,4)}\) are cyclic groups of order \(5\) and \(4\), respectively, and \(\angp{(2\,5)(3\,4),(1\,2\,3\,4\,5)}\) is dihedral of order \(10\); we will denote these three groups by \(\Cyc_5\), \(\Cyc_4\) and \(\Dih_5\), respectively. The subgroup lattice of \(\GA\) yields the following commutative diagram:
	\begin{equation}\label{eq:GADiagram}
		\begin{tikzcd}
			\{\Id\} \ar[d,dash] \ar[dr,dash] && \hat X \ar[d,"\pi_{\Cyc_4}"'] \ar[dr,"\pi_{\Cyc_5}"] &\\
			\Cyc_4 \ar[dd,dash] &\Cyc_5 \ar[d,dash] & \hat X/\Cyc_4 \cong X \ar[dd,"\pi^{\Cyc_4}\cong f"'] & \hat X/\Cyc_5 \ar[d,"\pi^{\Cyc_5}_{\Dih_5}"]\\
			& \Dih_5 \ar[dl,dash] && \hat X/\Dih_5 \ar[dl,"\pi^{\Dih_5}"]\\
			\GA& & \hat X/\GA \cong Y&
		\end{tikzcd}
	\end{equation}
	Since \(\Stab_{\GA}(1) = \Cyc_4\) we have \(\hat X/\Cyc_4 \cong X\) and \(\pi^{\Cyc_4} \cong f\).

	According to \cref{theo:ClassificationG0,theo:ClassificationGP}, we have
	\begin{equation}\label{eq:RamificationDivGA}
		R_f = \sum_{j=1}^{n_1} 4 p_j + \sum_{j=1}^{n_2} (q_j + r_j) + \sum_{j=1}^{n_3}3s_j,
	\end{equation}
	where \(n_3\) is even; also:
	\begin{itemize}
		\item if \(g = 1\), then \(n_1\), \(n_2\) and \(n_3\) cannot be all zero; and
		\item if \(g = 0\), then \(n_3 \geq 2\) and if \(n_1 = n_2 = 0\), then \(n_3 \geq 4\).
	\end{itemize}
	\Cref{eq:RamificationDivGA} implies that the signature of \(\hat f\) is \((g; 2_{\times n_2},4_{ \times n_3}, 5_{\times n_1})\).
	The genera and total ramification of the several coverings in \cref{eq:GADiagram} were computed through the Sage implementation of \cref{subsec:GaloisClosure} and are presented in \cref{table:TotalRamificationsGA}.
	\begin{table}
		\caption{Intermediate coverings of \(\hat f\) where \(G \cong \GA\)}
		\label{table:TotalRamificationsGA}
		\begin{tabular}{@{} >{\(}l <{\)} *{3}{>{\(}r <{\)}} @{}}\toprule
			\multicolumn{1}{@{}l}{\(H\)} & \multicolumn{1}{l}{Genus of \(\hat X/H\)}& \multicolumn{1}{l}{\(\deg(R_{\pi_H})\)} & \multicolumn{1}{l}{\(\deg(R_{\pi^H})\)}\\
			\midrule
			\{\Id\} 	& 20g_Y + 8n_1+5n_2& 0 & 16n_1+10n_2+15n_3\\
			& +15n_3/2 -19 & & \\[1ex]
			\Cyc_4 	&5g_Y + 2n_1+n_2& 2n_2+3n_3 &4n_1+2n_2+3n_3 \\
			&+3n_3 /2-4	&&\\[1ex]
			\Cyc_5 	&4g_Y + n_2	& 16n_1 & 2n_2+3n_3 \\
			& + 3n_3/2 -3 &&\\[1ex]
			\Dih_5 	&2 g_Y + n_3 /2 -1 			& 16n_1+10n_2+5n_3 	&n_3 \\[1ex]
			\GA&g_Y						& 16n_1+10n_2+15n_3	&0\\
			\bottomrule
		\end{tabular}
	\end{table}
	Also, we have
	\begin{equation}\label{eq:IntermediateRamificationGA}
		\deg R_{\pi^{\Cyc_5}_{\Dih_5}} = 2n_2 + n_3.
	\end{equation}

	There are five complex irreducible representations of \(\GA\):
	\begin{itemize}
		\item Four of degree~\(1\): the trivial representation \(U\), the restriction of the alternating representation \(\tilde U\), and two representations, dual to each other, denoted by \(W\) and \(W^*\).

		\item One of degree \(4\): the restriction of the standard representation \(V\).
	\end{itemize}
%
%
	The representations \(W\) and \(W^*\) are not rational, but their direct sum \(W \oplus W^*\) is. Hence \(\Irr_\QQ(\GA) = \{U, \tilde U, W \oplus W^*, V\}\).
%
	\Cref{table:RationalTableGA} shows he rational character table of \(\GA\).

	\begin{table}
		\caption{Rational character table of \(\GA\)}
		\label{table:RationalTableGA}
		\begin{tabular}{@{} >{\(} l <{\)} *{4}{ >{\(} r <{\)} } @{}}\toprule
			&1	&4				&10				&5	\\
			\GA&()	&(1\,2\,3\,4\,5)	&(2\,3\,5\,4)	&(1\,4)(2\,3)\\
			\midrule
			U			&1	&1	&1	&1\\
			\tilde U	&1	&1	&-1	&1\\
			W\oplus W^*	&2	&2	&0	&-2\\
			V			&4	&-1	&0	&0\\
			\bottomrule
		\end{tabular}
	\end{table}

	Rational irreducible representations of \(\GA\) satisfies the following properties:
	\begin{itemize}
		\item \(m_U = m_{\tilde U} = m_{W \oplus W^*} = m_{V} = 1\);
		\item \(U\), \(\tilde U\) and \(V\) are complex irreducible representations; and
		\item \(W \oplus W^*\) is Galois associated to \(W\), which is of degree \(1\).
	\end{itemize}
	So, by \cref{theo:GroupAlgebraDecomposition}, the group algebra decomposition of \(\J(\hat X)\) is of the form
	\begin{equation}\label{eq:GADecompositionForm}
		\J(\hat X) \sim A_1 \times A_2\times A_3 \times A_4^4.
	\end{equation}

	\begin{theo}\label{theo:CaseGA}
		If \(G \cong \GA\), then
		\begin{equation}\label{eq:GADecomposition}
			\J(\hat X) \sim \J(Y) \times \Prym\paren{\pi^{\Dih_5}} \times  \Prym\paren{\pi_{\Dih_5}^{\Cyc_5}} \times \Prym(f)^4
		\end{equation}
		is the group algebra decomposition of \(\J(\hat X)\), where \(\GA\) acts trivially on \(\J(Y)\) and as multiples of \(\tilde U\), \(W\oplus W^*\) and \(V\) on \(\Prym\paren{\pi^{\Dih_5}}\),  \(\Prym\paren{\pi_{\Dih_5}^{\Cyc_5}}\) and \(\Prym(f)^4\), respectively. The dimensions of the abelian varieties involved are:
		\begin{itemize}
			\item \(\dim \Prym\paren{\pi^{\Dih_5}} = g + n_3 /2 - 1\)

			\item \(\dim \Prym\paren{\pi_{\Dih_5}^{\Cyc_5}} = 2g + n_2 + n_3 - 2\)

			\item \(\dim \Prym(f) = 4g + 2n_1 + n_2 + 3 n_3/2 - 4\)
		\end{itemize}
		and the types of their polarizations:
		\begin{itemize}
			\item \(\type \Theta_{\Prym(\pi^{\Dih_5})} = \begin{cases}
				(2_{\times (g - 1)})	&\text{if \(n_3 = 0\),}\\
				(1_{\times (n_3/2 - 1)}, 2_{\times g})	&\text{if \(n_3 \geq 2\);}
			\end{cases}\)

			\vspace{1ex}

			\item \(\type \Theta_{\Prym(\pi_{\Dih_5}^{\Cyc_5})} = \begin{cases}
				(2_{\times (2g - 2)})	&\text{if \(n_2 = n_3 = 0\),}\\
				(1_{\times (n_2 + n_3/2 - 1)}, 2_{\times g_{2 g + n_3 /2 -1 }})	&\text{if \(n_2 + n_3 > 0\);}
			\end{cases}\)

			\vspace{1ex}

			\item \(\type \Theta_{\Prym(f)} = (1_{\times (3g + 2n_1 + n_2+ 3n_3/2 - 4)}, 5_{g})\).
		\end{itemize}
	\end{theo}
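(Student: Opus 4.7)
The plan is to follow the same strategy used in the proofs of \cref{theo:CaseC5,theo:CaseD5}: identify each factor in \eqref{eq:GADecompositionForm} by decomposing the induced representations $\rho_H$ for suitable subgroups $H$ of $\GA$, and then invoke \cref{prop:GAComponentAsPrym,theo:PrymAndPolarizations}. Using Frobenius reciprocity together with the rational character table in \cref{table:RationalTableGA}, I would first obtain
\[
\rho_{\GA} = U,\quad \rho_{\Dih_5} = U\oplus\tilde U,\quad \rho_{\Cyc_5} = U\oplus\tilde U\oplus(W\oplus W^*),\quad \rho_{\Cyc_4} = U\oplus V.
\]
The relations $\rho_{\Dih_5} = \tilde U\oplus\rho_{\GA}$, $\rho_{\Cyc_5} = (W\oplus W^*)\oplus\rho_{\Dih_5}$ and $\rho_{\Cyc_4} = V\oplus\rho_{\GA}$ then yield, via \cref{prop:GAComponentAsPrym}, the identifications $A_2\sim\Prym\paren{\pi^{\Dih_5}}$, $A_3\sim\Prym\bigl(\pi_{\Dih_5}^{\Cyc_5}\bigr)$ and $A_4\sim\Prym(f)$; the factor $A_1\sim\J(Y)$ is the one associated to the trivial representation. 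The three Prym dimensions would then be read off \cref{table:TotalRamificationsGA} as the appropriate differences of genera of the intermediate quotients.

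For the polarization types, the plan is to apply \cref{prop:CyclicEtaleFactor,theo:PrymAndPolarizations} to each covering in turn. Both $\pi^{\Dih_5}$ and $\pi_{\Dih_5}^{\Cyc_5}$ are cyclic of degree~$2$ since $\GA/\Dih_5\cong\Cyc_2$ and $\Dih_5/\Cyc_5\cong\Cyc_2$; the first is étale iff $n_3=0$ by \cref{table:TotalRamificationsGA}, and the second iff $n_2=n_3=0$ by \eqref{eq:IntermediateRamificationGA}. For $f$, \cref{prop:IntermediateOfGalois} rules out nontrivial factorizations because $\Cyc_4$ has prime index~$5$ in $\GA$, so \cref{prop:CyclicEtaleFactor} forces $\ord{\ker f^*}=1$. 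In each case, \cref{item:DecompositionInJacAndPrym} of \cref{theo:PrymAndPolarizations} expresses the polarization kernel of the Prym variety as $(\ker\pi^*)^\perp/\ker\pi^*$ inside the $d$-torsion of the base Jacobian, where $d$ is the degree of the covering; this is a power of $\Cyc_d$ whose rank depends on whether $\pi$ is étale, and a direct count of $1$'s and $d$'s converts it into the claimed polarization type.

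The main bookkeeping obstacle is the case analysis for $\Prym\bigl(\pi_{\Dih_5}^{\Cyc_5}\bigr)$, where the base curve $\hat X/\Dih_5$ carries its own $n_3$-dependent genus $2g+n_3/2-1$, so that the polarization type mixes contributions from $n_2$ and $n_3$ and the split $n_2=n_3=0$ versus $n_2+n_3>0$ must be treated carefully. No new idea beyond the symplectic-quotient calculation already used in \cref{theo:CaseC5,theo:CaseD5} is required.
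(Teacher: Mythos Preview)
Your proposal is correct and follows essentially the same approach as the paper: compute the induced representations $\rho_{\Dih_5}$, $\rho_{\Cyc_5}$, $\rho_{\Cyc_4}$ via Frobenius reciprocity and \cref{table:RationalTableGA}, apply \cref{prop:GAComponentAsPrym} to identify the factors, read off the dimensions from \cref{table:TotalRamificationsGA}, and then use \cref{prop:CyclicEtaleFactor} together with \cref{theo:PrymAndPolarizations} for the polarization types. The only cosmetic difference is that the paper phrases the non-factorization of $f$ by noting $\Cyc_4$ is not normal in $\GA$ (and then invokes \cref{prop:IntermediateOfGalois}), whereas you argue directly from the prime index; both are fine.
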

	\begin{proof}
		Frobenius reciprocity and \cref{table:RationalTableGA} imply that \(\rho_{\Dih_5} = U \oplus \tilde U\),	\(\rho_{\Cyc_5} = U \oplus \tilde U \oplus (W \oplus W^*)\) and \(\rho_{\Cyc_4} = U \oplus V\).
		Therefore, following the notation of \cref{eq:GADecompositionForm}, \cref{prop:GAComponentAsPrym} implies that \(A_1 \sim \J(Y)\), \(A_2 \sim \Prym\paren{\pi^{\Dih_5}}\), \(A_3 \sim \Prym\paren{\pi_{\Dih_5}^{\Cyc_5}}\) and \(A_4 \sim \Prym(f)\).

		The dimension of the several Jacobian and Prym varieties are directly computed from the genera of the corresponding curves in \cref{table:TotalRamificationsGA}.

		According to \cref{table:TotalRamificationsGA} and \cref{eq:IntermediateRamificationGA}, we have that \(\pi^{\Dih_5}\) and \(\pi^{\Cyc_5}_{\Dih_5}\) are étale if and only if \(n_3 = 0\) and \(n_2 = n_3 = 0\), respectively; besides, the maps \(\pi^{\Dih_5}\) and \(\pi^{\Cyc_5}_{\Dih_5}\) are cyclic (both of them are of degree~\(2\)) whereas \(f\) is not even Galois (because \(\Cyc_4\) is not normal in \(\GA\)). Also, by \cref{prop:IntermediateOfGalois}, neither \(\pi^{\Dih_5}\) nor \(\pi^{\Cyc_5}_{\Dih_5}\) nor \(f\) factor non-trivially. Hence, by \cref{prop:CyclicEtaleFactor} we have that
		\begin{align*}
			\ord{\ker \pi^{\Dih_5*}} &= \begin{cases}
				2& \text{if \(n_3 = 0\),}\\
				1& \text{if \(n_3 \geq 2\);}
			\end{cases}\\
			\ord{\ker \pi_{\Dih_5}^{\Cyc_5*}} &= \begin{cases}
				2& \text{if \(n_2 = n_3 = 0\),}\\
				1& \text{if \(n_2 + n_3 > 0\),}
			\end{cases}
		\end{align*}
		and \(\ord{\ker f^*} = 1\). Using \cref{item:DecompositionInJacAndPrym} of \cref{theo:PrymAndPolarizations} we get the types of the polarizations.
	\end{proof}


	\subsection{Alternating monodromy group}

	Suppose that \(G = \Alt_5\) and set \(\Cyc_5 \coloneqq \angp{(1\,2\,3\,4\,5)}\), \(\Dih_5 \coloneqq \angp{(1\,2\,3\,4\,5), (2\,5)(3\,4)}\) and \(\Alt_4 \coloneqq \angp{(2\,3\,4), (3\,4\,5)}\). The subgroup lattice of \(\Alt_5\) yields the following commutative diagram:
	\begin{equation}\label{eq:A5Diagram}
		\begin{tikzcd}
			\{\Id\} \ar[dd,dash] \ar[dr,dash] &&[2em]\hat X \ar[dd,"\pi_{\Alt_4}"'] \ar[dr,"\pi_{\Cyc_5}"] &\\
			&\Cyc_5 \ar[d,dash]  && \hat X/\Cyc_5 \ar[d,"\pi^{\Cyc_5}_{\Dih_5}"]\\
			\Alt_4 \ar[d,dash] & \Dih_5 \ar[dl,dash]& \hat X/\Alt_4 \cong X \ar[d,"\pi^{\Alt_4}\cong f"'] & \hat X/\Dih_5  \ar[dl,"\pi^{\Dih_5}"]\\
			\Alt_5&& \hat X/\Alt_5 \cong Y&
		\end{tikzcd}
	\end{equation}
	Since \(\Stab_{\Alt_5}(1) = \Alt_4\), we have \(\hat X/\Alt_4 \cong X\) and \(\pi^{\Alt_4} \cong f\).

	According to \cref{theo:ClassificationG0,theo:ClassificationGP}, we have
	\begin{equation}\label{eq:RamificationDivA5}
		R_f = \sum_{j=1}^{n_1} 4 p_j + \sum_{j=1}^{n_2} (q_j + r_j) + \sum_{j=1}^{n_4}2t_j;
	\end{equation}
	also:
	\begin{itemize}
		\item if \(g = 1\), then \(n_1+n_2+n_4 > 0\), and if \(n_1 = n_4 = 0\), then \(n_2 \geq 2\); and
		\item if \(g = 0\), then \(\deg(R_f) \geq 8\) and if \(n_4 = 0\), then \(\deg(R_f) > 8\).
	\end{itemize}
    \Cref{eq:RamificationDivA5} implies that the signature of \(\hat f\) is \((g; 2_{\times n_2}, 3_{\times n_4}, 5_{\times n_1})\).
	The genera and total ramification of the several coverings in \cref{eq:A5Diagram} were computed through the Sage implementation of \cref{subsec:GaloisClosure} and are presented in \cref{table:TotalRamificationsA5}.
	\begin{table}
		\caption{Total ramification of the intermediate coverings of \(\hat f\) with \(G \cong \Alt_5\)}
		\label{table:TotalRamificationsA5}
		\begin{tabular}{@{} >{\(}l <{\)} *{3}{>{\(}r <{\)}} @{}}\toprule
			\multicolumn{1}{@{}l}{\(H\)} & \multicolumn{1}{l}{Genus of \(\hat X/H\)}& \multicolumn{1}{l}{\(\deg(R_{\pi_H})\)} & \multicolumn{1}{l}{\(\deg(R_{\pi^H})\)}\\
			\midrule
			\{\Id\} 	&60g_Y+24n_1+15n_2& 0 & 48n_1+30n_2+40n_4\\
			&+20n_4-59&&\\[1ex]
			\Cyc_5 	&12g_Y+4n_1+3n_2& 8n_1 & 8n_1+6n_2+8n_4 \\
			&+4n_4-11&&\\[1ex]
			\Dih_5 	&6g_Y+2n_1+n_2& 8n_1+10n_2 & 4n_1+2n_2+4n_4 \\
			&+2n_4-5 &&\\[1ex]
			\Alt_4 	&5g_Y+2n_1+n_2& 6n_2+16n_4& 4n_1+2n_2+2n_4 \\
			&+n_4-4&&\\[1ex]
			\Alt_5&g_Y	& 48n_1+30n_2+40n_4	& 0\\
			\bottomrule
		\end{tabular}
	\end{table}
	Also, we get that
	\begin{equation}\label{eq:IntermediateRamificationA5}
		\deg R_{\pi^{\Cyc_5}_{\Dih_5}} = 2n_2.
	\end{equation}

	According to \cite{book:FultonHarris}*{section~3.1}, \(\Alt_5\) has five complex irreducible representations:
	\begin{enumerate}
		\item The trivial representation, which we will denote by \(U\).

		\item One of degree \(4\); the restriction of the standard representation \(V\).

		\item One of degree \(5\), which we will denote by \(W\).

		\item Two of degree \(3\), denoted by \(W_2\) and \(W_3\), that satisfy \(W_2 \oplus W_3 = \Altp^2 V\).
	\end{enumerate}
%
%
	The representations \(W_2\) and \(W_3\) are not rational, but their direct sum \(\Altp^2 V\) is. So, the rational conjugacy classes of \(\Alt_5\) are also four: the classes of \(\Id\),  \((1\,2\,3)\), \((2\,5)(3\,4)\) and \((1\,2\,3\,4\,5)\).
	Hence \(\Irr_\QQ(\Alt_5) = \{U, V, W, \Altp^2 V\}\). The rational character table of \(\Alt_5\) is shown in \Cref{table:RationalTableA5}.
	\begin{table}
		\caption{Rational character table of \(\Alt_5\)}
		\label{table:RationalTableA5}
		\begin{tabular}{@{} >{\(} l <{\)} *{4}{ >{\(} r <{\)} } @{}}\toprule
			&1	&20			&15				&24	\\
			\Alt_5	&\Id	&(1\,2\,3)	&(1\,2)(3\,4)	&(1\,2\,3\,4\,5)\\
			\midrule
			U		&1	&1	&1	&1	\\
			V		&4	&1	&0	&-1	\\
			W		&5	&-1	&1	&0	\\
			\Altp^2V&6	&0	&-2	&1	\\
			\bottomrule
		\end{tabular}
	\end{table}
	Moreover, the rational irreducible representations of \(\Alt_5\) satisfies the following properties:
	\begin{itemize}
		\item \(m_U = m_{V} = m_{W} = m_{\Altp^2 V} = 1\);
		\item \(U\), \(V\) and \(W\) are complex irreducible representations; and
		\item \(\Altp^2 V\) is Galois associated to \(W_2\), which is of degree \(3\).
	\end{itemize}
	Hence, by \cref{theo:GroupAlgebraDecomposition}, the group algebra decomposition of \(\J(\hat X)\) is of the form
	\begin{equation}\label{eq:A5DecompositionForm}
		\J(\hat X) \sim A_1 \times A_2^4 \times A_3^5 \times A_4^3.
	\end{equation}

	\begin{theo}\label{theo:CaseA5}
		If \(G = \Alt_5\), then
		\begin{equation}\label{eq:A5Decomposition}
			\J(\hat X) \sim \J(Y) \times \Prym(f)^4 \times \Prym\paren{\pi^{\Dih_5}}^5   \times \Prym\paren{\pi_{\Dih_5}^{\Cyc_5}}^3
		\end{equation}
		is the group algebra decomposition of \(\J(\hat X)\), where \(\Alt_5\) acts trivially on \(\J(Y)\) and as multiples of \(V\), \(W\) and \(\Altp^2 V\) on \(\Prym(f)^4\), \(\Prym\paren{\pi^{\Dih_5}}^5\) and  \(\Prym\paren{\pi_{\Dih_5}^{\Cyc_5}}^3\), respectively. The dimensions of the Prym varieties involved are:
		\begin{itemize}
			\item \(\dim \Prym(f) = 4g + 2n_1 + n_2 + n_4 - 4\)

			\item \(\dim \Prym\paren{\pi^{\Dih_5}} = 5g + 2n_1 +n_2 + 2n_4 - 5\)

			\item \(\dim \Prym\paren{\pi_{\Dih_5}^{\Cyc_5}} = 6g + 2n_1 + 2n_2 + 2n_4 - 6\)
		\end{itemize}
		and the types of their polarizations are:
		\begin{itemize}
			\item \(\type \Theta_{\Prym(f)} = (1_{\times (3g + 2n_1 + n_2+ n_4 - 4)}, 5_{\times g})\);
            
            \vspace{1ex}

			\item \(\type \Theta_{\Prym(\pi^{\Dih_5})} = (1_{\times (4g + 2n_1 +n_2+ 2n_4 - 5)}, 6_{\times g})\);

			\vspace{1ex}

			\item \(\type \Theta_{\Prym(\pi_{\Dih_5}^{\Cyc_5})} = \begin{cases}
				(2_{\times (6g+2n_1+2n_4-6)})	&\text{if \(n_2 = n_3 = 0\),}\\
				(1_{\times (n_2 - 1)}, 2_{6g+2n_1+n_2 +2n_4-5})	&\text{if \(n_2 + n_3 > 0\).}
			\end{cases}\)
		\end{itemize}
	\end{theo}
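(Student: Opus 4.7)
The plan is to imitate closely the pattern established in the proofs of \cref{theo:CaseC5,theo:CaseD5,theo:CaseGA}, since the structure is identical: identify each isotypical component $A_i$ in \cref{eq:A5DecompositionForm} with a specific Prym variety via Frobenius reciprocity and \cref{prop:GAComponentAsPrym}, then read off dimensions and polarization types from \cref{table:TotalRamificationsA5,theo:PrymAndPolarizations}.

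First I would use Frobenius reciprocity together with \cref{table:RationalTableA5} to compute the decompositions of the induced representations $\rho_{\Alt_4}$, $\rho_{\Dih_5}$, $\rho_{\Cyc_5}$ in terms of $\Irr_\QQ(\Alt_5) = \{U, V, W, \Altp^2 V\}$. The expected outcomes are
\[
\rho_{\Alt_4} = U \oplus V, \qquad \rho_{\Dih_5} = U \oplus W, \qquad \rho_{\Cyc_5} = U \oplus W \oplus \Altp^2 V,
\]
which can be checked via inner products of characters, or, as a sanity check, by comparing degrees $5$, $6$, and $12$ respectively. Applying \cref{prop:GAComponentAsPrym} to the chains $\Alt_4 \subseteq \Alt_5$, $\Dih_5 \subseteq \Alt_5$, and $\Cyc_5 \subseteq \Dih_5$ then immediately identifies $A_1 \sim \J(Y)$, $A_2 \sim \Prym(\pi^{\Alt_4}) = \Prym(f)$, $A_3 \sim \Prym(\pi^{\Dih_5})$, and $A_4 \sim \Prym(\pi_{\Dih_5}^{\Cyc_5})$, yielding \cref{eq:A5Decomposition}.

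Next, the dimensions are routine: subtract $g_Y$ or $g_{\hat X/\Dih_5}$, as appropriate, from the genera listed in \cref{table:TotalRamificationsA5}. For the polarization types I would first determine $\ker f^*$, $\ker \pi^{\Dih_5*}$, and $\ker \pi_{\Dih_5}^{\Cyc_5 *}$ through \cref{prop:CyclicEtaleFactor}: since $\Alt_4$ and $\Dih_5$ are both maximal in the simple group $\Alt_5$, \cref{prop:IntermediateOfGalois} guarantees that $f$ and $\pi^{\Dih_5}$ admit no non-trivial intermediate factorization, and neither is cyclic, so the kernels of their pullbacks are trivial; the kernel of $\pi_{\Dih_5}^{\Cyc_5 *}$ is trivial as well because the degree is $2$. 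Then \cref{item:InducedPolarizationOnJacobian,item:DecompositionInJacAndPrym} of \cref{theo:PrymAndPolarizations}, combined with the ramification degrees computed in \cref{table:TotalRamificationsA5} and \cref{eq:IntermediateRamificationA5}, yield the stated types $(1_{\times *}, 5_{\times g})$, $(1_{\times *}, 6_{\times g})$, and the two-case formula for $\Prym(\pi_{\Dih_5}^{\Cyc_5})$ distinguishing the étale situation ($n_2 = 0$) from the ramified one.

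The main delicate point is the polarization type of $\Prym(\pi_{\Dih_5}^{\Cyc_5})$: although $\pi_{\Dih_5}^{\Cyc_5}$ has the simplest degree ($2$), the base curve $\hat X/\Dih_5$ itself has non-trivial genus depending on all of $g$, $n_1$, $n_2$, $n_4$, so I must carefully express $\ker(\Theta_{\Prym})$ as a quotient $(\ker \pi^*)^{\perp}/\ker \pi^*$ inside $\J(\hat X/\Dih_5)$ using the exact genus formula, and then translate this finite group into a polarization type. The two regimes (étale and ramified) arise precisely because the condition $\deg R_{\pi_{\Dih_5}^{\Cyc_5}} = 2n_2 = 0$ changes whether $\pi_{\Dih_5}^{\Cyc_5}$ factors through a cyclic étale covering of degree~$2$, a borderline case of \cref{prop:CyclicEtaleFactor} that must be handled separately.
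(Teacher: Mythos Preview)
Your overall plan is exactly the paper's: compute $\rho_{\Alt_4}=U\oplus V$, $\rho_{\Dih_5}=U\oplus W$, $\rho_{\Cyc_5}=U\oplus W\oplus\Altp^2 V$ by Frobenius reciprocity, apply \cref{prop:GAComponentAsPrym} to identify the $A_i$, and then read off dimensions and polarization types from \cref{table:TotalRamificationsA5} and \cref{theo:PrymAndPolarizations}. That part is fine.

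There is, however, a genuine error in the polarization step. You assert that ``the kernel of $\pi_{\Dih_5}^{\Cyc_5*}$ is trivial as well because the degree is~$2$.'' This is false: a degree-$2$ map is automatically cyclic, and by \cref{eq:IntermediateRamificationA5} it is \emph{étale} precisely when $n_2=0$, in which case $\ker\pi_{\Dih_5}^{\Cyc_5*}$ has order~$2$. (If you were taking the phrase ``degree greater than~$2$'' in \cref{prop:CyclicEtaleFactor} literally, note that the paper uses that proposition throughout for degree-$2$ étale maps to obtain a kernel of order~$2$; the intended hypothesis is degree at least~$2$.) The paper's computation is
\[
\bigl|\ker \pi_{\Dih_5}^{\Cyc_5*}\bigr|=\begin{cases}2&\text{if }n_2=0,\\1&\text{if }n_2>0,\end{cases}
\]
and it is exactly this dichotomy, fed into \cref{theo:PrymAndPolarizations}\,\ref{item:DecompositionInJacAndPrym}, that produces the two-case formula for $\type\Theta_{\Prym(\pi_{\Dih_5}^{\Cyc_5})}$. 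Your final paragraph seems to sense this (you speak of the étale case as a ``borderline case'' to be ``handled separately''), but it flatly contradicts the earlier sentence; as written, your middle paragraph would force a single case and give the wrong exponents when $n_2=0$.
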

	\begin{proof}
		Frobenius reciprocity and \cref{table:RationalTableA5} show that \(\rho_{\Alt_4} = U\oplus V\), \(\rho_{\Dih_5} = U \oplus W\) and \(\rho_{\Cyc_5} = U \oplus W \oplus \Altp^2 V\).
		Therefore, in the notation of \cref{eq:A5DecompositionForm}, \cref{prop:GAComponentAsPrym} implies that \(A_1 \sim \J(Y)\), \(A_2 \sim \Prym(f)\), \(A_3 \sim \Prym\paren{\pi^{\Dih_5}}\) and \(A_4 \sim \Prym\paren{\pi_{\Dih_5}^{\Cyc_5}}\). The dimension of the several Jacobian and Prym varieties are directly computed from the genera of the corresponding curves in \cref{table:TotalRamificationsA5}.

		According to \cref{eq:IntermediateRamificationA5}, we have that \(\pi^{\Cyc_5}_{\Dih_5}\) is étale if and only if \(n_2 = 0\); besides, the map \(\pi^{\Cyc_5}_{\Dih_5}\) is cyclic because it is of degree~\(2\), whereas neither \(f\) nor \(\pi^{\Dih_5}\) are Galois because \(\Alt_5\) is simple; moreover, \cref{prop:IntermediateOfGalois} implies that neither of those maps factor through a Galois covering onto \(Y\). Hence, by \cref{prop:CyclicEtaleFactor}, we have \(\ord{\ker f^*} = 1\), \(\ord{\ker \pi^{\Dih_5*}} = 1\) and
		\begin{equation*}
			\ord{\ker \pi_{\Dih_5}^{\Cyc_5*}} = \begin{cases}
				2& \text{if \(n_2 = 0\),}\\
				1& \text{if \(n_2 > 0\).}
			\end{cases}
		\end{equation*}
		From \cref{item:DecompositionInJacAndPrym} of \cref{theo:PrymAndPolarizations} and \cref{table:TotalRamificationsA5} we get the desired polarization types.
	\end{proof}


	\subsection{Symmetric monodromy group}

	Suppose that \(G = \Sym_5\) and set:
		\begin{itemize}
			\item \(\GA \coloneqq \angp{(1\,2\,3\,4\,5),(1\,2\,4\,3)}\)
			\item \(\Alt_5 \coloneqq \angp{(1\,2\,3\,4\,5), (1\,2\,3)}\)
			\item \(\Sym_4 \coloneqq \angp{(2\,3\,4\,5), (2\,3)}\)
			\item \(\Dih_6 \coloneqq \angp{(1\,2),(3\,4),(3\,4\,5)}\)
			\item \(\Dih_5 \coloneqq \angp{(1\,2\,3\,4\,5), (2\,5)(3\,4)}\)
			\item \(\Alt_4 \coloneqq \angp{(2\,3\,4), (3\,4\,5)}\)
			\item \(\Sym_3 \coloneqq \angp{(3\,4\,5),(3\,4)}\)
		\end{itemize}
	These subgroups induce the following intermediate coverings of \(\hat f\):
	\begin{gather*}\label{eq:S5Diagram}
			\begin{tikzcd}[column sep=1em,ampersand replacement=\&]
				\&\&\&\{\Id\} \ar[dll,dash] \ar[d,dash] \ar[drr,dash]\&\&\&[-0.8em]  \\
				\&\Sym_3 \ar[dl,dash] \ar[dr,dash]\&\&\Alt_4 \ar[dl,dash] \ar[dr,dash] \&\& \Dih_5 \ar[dl,dash] \ar[dr,dash] \&  \\
				\Dih_6 \ar[drrr,dash] \&\& \Sym_4 \ar[dr,dash] \&\& \Alt_5 \ar[dl,dash] \&\& \GA \ar[dlll,dash]\\
				\&\&\&\Sym_5\&\&\&
			\end{tikzcd}
        \\    
			\begin{tikzcd}[column sep=0em,ampersand replacement=\&]
				\&[-1ex]\&[-1ex]\&[-1ex]\hat X \ar[dll] \ar[d] \ar[drr]\&\&\&[-1ex]  \\[2ex]
				\&\hat X/\Sym_3 \ar[dl,"\pi_{\Dih_6}^{\Sym_3}"'] \ar[dr,"\pi_{\Sym_4}^{\Sym_3}"]\&\&\hat X/\Alt_4 \ar[dl,"\pi_{\Sym_4}^{\Alt_4}"'] \ar[dr,"\pi_{\Alt_5}^{\Alt_4}"] \&\& \hat X/\Dih_5 \ar[dl,"\pi_{\Alt_5}^{\Dih_5}"'] \ar[dr,"\pi_{\GA}^{\Dih_5}"] \&  \\[3ex]
				\hat X/ \Dih_6 \ar[drrr,"\pi^{\Dih_6}"'] \&\& \hat X/\Sym_4 \cong X \ar[dr,"\pi^{\Sym_4}\cong f"] \&\& \hat X/\Alt_5 \ar[dl,"\pi^{\Alt_5}"'] \&\& \hat X/\GA \ar[dlll,"\pi^{\GA}"]\\[3ex]
				\&\&\&\hat X/\Sym_5\cong Y \&\&\&
			\end{tikzcd}
	\end{gather*}
	Since \(\Stab_{\Sym_5}(1) = \Sym_4\), we have \(\hat X/\Sym_4 \cong X\) and \(\pi^{\Sym_4} \cong f\).

	According to \cref{theo:ClassificationG0,theo:ClassificationGP}, we have
	\begin{equation}\label{eq:RamificationDivS5}
		R_f = \sum_{j=1}^{n_1} 4 p_j + \sum_{j=1}^{n_2} (q_j + r_j) + \sum_{j=1}^{n_3} 3s_j + \sum_{j=1}^{n_4} 2 t_j + \sum_{j=1}^{n_5} (2u_j + v_j) + \sum_{j=1}^{n_6} w_j,
	\end{equation}
	where \(n_3\), \(n_5\) and \(n_6\) are even; also:
	\begin{itemize}
		\item If \(g=1\), then \(n_i\) cannot be zero for all \(i\in \{1,\ldots,6\}\).
		\item If \(g = 0\), then \(\deg(R_f) \geq 8\) and if \(n_4 = n_5 = n_6 = 0\), then \(\deg(R_f) > 8\).
	\end{itemize}

	\Cref{eq:RamificationDivS5} implies that the signature of \(\hat f\) is \((g; 2_{\times (n_2 + n_6)}, 3_{\times n_4}, 4_{\times n_3}, 5_{\times n_1}, 6_{\times n_5})\).
	The genera and total ramification of the several coverings in \cref{eq:S5Diagram} were computed through the Sage implementation of \cref{subsec:GaloisClosure} and are presented in \cref{table:TotalRamificationsS5}.
	\begin{sidewaystable}
		\centering
		\caption{Total ramification of the intermediate coverings of \(\hat f\) with \(G \cong \Sym_5\)}
		\label{table:TotalRamificationsS5}
		\begin{tabular}{@{} >{\(}l <{\)} *{3}{>{\raggedleft\arraybackslash\(} p{5.7cm}  <{\)}} @{}}\toprule
			\multicolumn{1}{@{}l}{\(H\)} & \multicolumn{1}{l}{Genus of \(\hat X/H\)}& \multicolumn{1}{l}{\(\deg(R_{\pi_H})\)} & \multicolumn{1}{l}{\(\deg(R_{\pi^H})\)}\\
			\midrule
			\{\Id\} &  120g_Y + 48 n_{1} + 30 n_{2} + 45 n_{3} + 40 n_{4} \linebreak+ 50 n_{5} + 30 n_{6} - 119 & 0 & 96 n_{1} + 60 n_{2} + 90 n_{3} + 80 n_{4} \linebreak+ 100 n_{5} + 60 n_{6} \\
			\addlinespace[2ex]
			\Sym_3 & 20 g_Y + 8 n_{1} + 5 n_{2} + 15 n_{3}/2 + 6 n_{4} \linebreak+ 15 n_{5}/2 + 7 n_{6}/2 - 19 & 8 n_{4} + 10 n_{5} + 18 n_{6} & 16 n_{1} + 10  n_{2} + 15 n_{3} + 12 n_{4} \linebreak+ 15 n_{5} + 7 n_{6} \\
			\addlinespace[2ex]
			\Dih_5 & 12 g_Y + 4 n_{1} + 2 n_{2} + 4 n_{3} + 4 n_{4} + 5 n_{5} \linebreak+ 3 n_{6} - 11 & 16 n_{1} + 20 n_{2} + 10 n_{3} & 8 n_{1} + 4 n_{2} + 8 n_{3} + 8 n_{4} + 10 n_{5} + 6 n_{6} \\
			\addlinespace[2ex]
			\Alt_4 & 10 g_Y + 4 n_{1} + 2 n_{2} + 7 n_{3}/2 + 2 n_{4} \linebreak+ 7 n_{5}/2 + 5 n_{6}/2 - 9 & 12 n_{2} + 6 n_{3} + 32 n_{4} + 16 n_{5} & 8 n_{1} + 4 n_{2} + 7 n_{3} + 4 n_{4} + 7 n_{5} + 5 n_{6} \\
			\addlinespace[2ex]
			\Dih_6 & 10 g_Y + 4 n_{1} + 2 n_{2} + 7 n_{3}/2 + 3 n_{4} \linebreak+ 7 n_{5}/2 + 3 n_{6}/2 - 9 & 12 n_{2} + 6 n_{3} + 8 n_{4} + 16 n_{5} + 24  n_{6} & 8 n_{1} + 4 n_{2} + 7 n_{3} + 6 n_{4} + 7 n_{5} + 3 n_{6} \\
			\addlinespace[2ex]
			\GA & 6 g_Y + 2 n_{1} + n_{2} + 3 n_{3}/2 + 2 n_{4} \linebreak+ 5 n_{5}/2 + 3 n_{6}/2 - 5 & 16 n_{1} + 20 n_{2} + 30 n_{3} & 4 n_{1} + 2 n_{2} + 3 n_{3} + 4  n_{4} + 5 n_{5} + 3 n_{6} \\
			\addlinespace[2ex]
			\Sym_4 & 5 g_Y +  2 n_{1} + n_{2} + 3 n_{3}/2 + n_{4} \linebreak+ 3 n_{5}/2 + n_{6}/2 - 4 & 12 n_{2} + 18 n_{3} + 32 n_{4} + 28 n_{5} + 36 n_{6} & 4 n_{1} + 2 n_{2} + 3 n_{3} + 2 n_{4} + 3 n_{5} + n_{6} \\
			\addlinespace[2ex]
			\Alt_5 & 2 g_Y + n_{3}/2 +  n_{5}/2 + n_{6}/2 - 1 & 96 n_{1} + 60 n_{2} + 30 n_{3} + 80 n_{4} + 40 n_{5} & n_{3} + n_{5} + n_{6} \\
			\bottomrule
		\end{tabular}
	\end{sidewaystable}
	Also, we get that
	\begin{align}
		\deg R_{\pi^{\Sym_3}_{\Dih_6}} &= 2 n_2 + n_3 + n_5 + n_6,\label{eq:deg1}\\
		\deg R_{\pi^{\Alt_4}_{\Sym_4}} &= n_3 + n_5 + 3 n_6\label{eq:deg2}\\
		\shortintertext{and}
		\deg R_{\pi^{\Dih_5}_{\Alt_5}} &= 8 n_1 + 4 n_2 + 2 n_3 + 8 n_4 + 4 n_5.\label{eq:deg3}
	\end{align}

	According to \cite{book:FultonHarris}*{section~3.1}, there are seven complex irreducible representations of the symmetric group \(\Sym_5\):
	\begin{itemize}
		\item Two of order \(1\); the trivial representation \(U\) and the alternating one \(\tilde U\).

		\item Two of degree \(4\): the standard representation \(V\) and the product \(\tilde U \otimes V\), which we denote by \(\tilde V\).

		\item Two of degree \(5\): one that will be denoted by \(W\) and the product \(\tilde U \otimes W\) denoted by \(\tilde W\).

		\item One of degree \(6\), namely \(\Altp^2 V\).
	\end{itemize}
%
%
	The rational conjugacy classes of \(\Sym_5\) are seven: the class of \(\Id\), \((1\,2)\), \((1\,2\,3)\), \((1\,2\,3\,4)\), \((1\,2\,3\,4\,5)\), \((1\,2)(3\,4)\) and \((1\,2)(3\,4\,5)\).
	Therefore, the seven complex irreducible representations of \(\Sym_5\) are rational. \Cref{table:ComplexTableS5} is the rational character table of \(\Sym_5\).
	\begin{table}
		\caption{Complex and rational character table of \(\Sym_5\)}
		\label{table:ComplexTableS5}
		\begin{tabular}{@{} >{\(} l <{\)} *{7}{ >{\(} r <{\)} } @{}}\toprule
			&1	&10	&20	& 30 &24 &15 &20 \\
			\Alt_5	&\Id & (1\,2) &(1\,2\,3) &(1\,2\,3\,4) & (1\,2\,3\,4\,5)	&(1\,2)(3\,4)	&(1\,2)(3\,4\,5)\\
			\midrule
			U			&1	&1	&1	&1	&1 	&1	& 1 \\
			\tilde{U}	&1	&-1	&1	&-1	&1	&1	&-1 \\
			V			&4	&2	&1	&0	&-1	&0	&-1 \\
			\tilde{V}	&4	&-2	&1	&0	&-1	&0	&1  \\
			\Altp^2V	&6	&0	&0	&0	&1	&-2	&0  \\
			W			&5	&1	&-1	&-1	&0	&1	&1	\\
			\tilde W	&5	&-1	&-1	&1	&0	&1	&-1	\\
			\bottomrule
		\end{tabular}
	\end{table}
	All the rational irreducible representations of \(\Sym_5\) have Schur index \(1\) and are complex irreducible representations.
	Hence, by \cref{theo:GroupAlgebraDecomposition}, the group algebra decomposition of \(\J(\hat X)\) is of the form
	\begin{equation}\label{eq:S5DecompositionForm}
		\J(\hat X) \sim A_1 \times A_2 \times A_3^4 \times A_4^4 \times A_5^6 \times A_6^5 \times A_7^5.
	\end{equation}

	\begin{theo}\label{theo:CaseS5}
		If \(G = \Sym_5\), then
		\begin{equation}\label{eq:S5Decomposition}
			\begin{split}
			\J(\hat X) \sim{} &\J(Y) \times \Prym(\pi^{\Alt_5})\times \Prym(f)^4 \times \Prym\paren{\pi^{\Alt_4}_{\Sym_4}, \pi^{\Alt_4}_{\Alt_5}}^4\\
			&\times \Prym\paren{\pi^{\Sym_3}_{\Dih_6}, \pi^{\Sym_3}_{\Sym_4}}^6 \times \Prym\paren{\pi^{\Dih_5}_{\Alt_5}, \pi^{\Dih_5}_{\GA}}^5 \times \Prym(\pi^{\GA})^5
		\end{split}
		\end{equation}
		is the group algebra decomposition of \(\J(\hat X)\), where \(\Sym_5\) acts as multiples of \(U\), \(\tilde U\), \(V\), \(\tilde V\), \(\Altp^2 V\), \(W\) and \(\tilde W\) on the respective components of the decomposition. The dimensions of the Prym varieties are:
		\begin{itemize}
			\item \(\dim \Prym(\pi^{\Alt_5}) = g + n_3/2 + n_5/2 + n_6/2 - 1\)

			\item \(\dim \Prym(f) = 4g + 2n_1 + n_2 + 3n_3/2 + n_4 + 3n_5/2 + n_6/2 - 4\)

			\item \(\dim \Prym\paren{\pi^{\Alt_4}_{\Sym_4}, \pi^{\Alt_4}_{\Alt_5}}= 4g + 2n_1 + n_2 + 3n_3/2 + n_4 + 3n_5/2 + 3n_6/2 - 4\)

			\item \(\dim \Prym\paren{\pi^{\Sym_3}_{\Dih_6}, \pi^{\Sym_3}_{\Sym_4}} = 6g + 2n_1 + 2n_2 + 5n_3/2 + 2n_4/2 + 5n_5/2 + 3n_6/2 - 6\)

			\item \(\dim \Prym\paren{\pi^{\Dih_5}_{\Alt_5}, \pi^{\Dih_5}_{\GA}} = 5g + 2n_1 + n_2 + 2n_3 + 2n_4 + 2n_5 + n_6 - 5\)

			\item \(\dim \Prym(\pi^{\GA}) = 5g + 2n_1 + n_2 + 3n_3/2 + 2n_4 + 5n_5/2 + 3n_6/2 - 5\)
		\end{itemize}
		and the types of their polarizations are:
		\begin{itemize}
			\item \(\type \Theta_{\Prym(f)} = (1_{\times (3g + 2n_1 + n_2+ 3n_3/2 + n_4 + 3n_5/2 + n_6/2 - 4)}, 5_{\times g})\);

			\vspace{1ex}

			\item \(\type \Theta_{\Prym(\pi^{\GA})} = (1_{\times (4g + 2n_1+n_2+ 3n_3/2 + 2n_4 + 5n_2/2 + 3n_6/2 - 5)}, 6_{\times g})\);

			\vspace{1ex}

			\item \(\type \Theta_{\Prym(\pi^{\Alt_5})} = \begin{cases}
				(2_{\times g - 1})	&\text{if \(n_3 = n_5 = n_6 = 0\),}\\
				(1_{\times ((n_3 + n_5 + n_6)/2 - 1)}, 2_{\times g})	&\text{if \(n_3 + n_5 + n_6 > 0\).}
			\end{cases}\)
		\end{itemize}
	\end{theo}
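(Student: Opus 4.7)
The proof will follow the same blueprint used for \cref{theo:CaseC5,theo:CaseD5,theo:CaseGA,theo:CaseA5}, but now with seven isotypical components and, for three of them, the natural description by means of an intermediate covering is unavailable so we must appeal to \cref{theo:PrymOfPairsOfCoverings}. The plan is to (i) compute $\rho_H$ for every subgroup appearing in the diagram via Frobenius reciprocity against \cref{table:ComplexTableS5}, (ii) match each rational irreducible character to the appropriate (pair of) intermediate covering(s) so as to identify the factors $A_1, \dots, A_7$ up to isogeny, (iii) read off dimensions from \cref{table:TotalRamificationsS5} together with \cref{eq:deg1,eq:deg2,eq:deg3}, and (iv) extract the polarization types from \cref{prop:CyclicEtaleFactor,theo:PrymAndPolarizations}.

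For step (ii), the three easy cases are handled by \cref{prop:GAComponentAsPrym}: a direct Frobenius computation yields $\rho_{\Sym_4} = U \oplus V$, $\rho_{\GA} = U \oplus W$ and $\rho_{\Alt_5} = U \oplus \tilde U$, so
\(A_3 \sim \Prym(f)\), \(A_6 \sim \Prym(\pi^{\GA})\) and \(A_2 \sim \Prym(\pi^{\Alt_5})\), while $A_1 \sim \J(Y)$ comes from the trivial character in $\rho_{\Sym_5} = U$. For the remaining three components $A_4, A_5, A_7$ (associated to $\tilde V$, $\Altp^2 V$, $\tilde W$), I would apply \cref{theo:PrymOfPairsOfCoverings} to the pairs $(N_1, N_2)$ equal to $(\Sym_4, \Alt_5)$, $(\Dih_6, \Sym_4)$ and $(\Alt_5, \GA)$, respectively; in each case $M = N_1 \cap N_2$ is $\Alt_4$, $\Sym_3$, $\Dih_5$, and $N = \langle N_1, N_2 \rangle$ equals $\Sym_5$. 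One then verifies, purely via Frobenius reciprocity and \cref{table:ComplexTableS5}, that
\[
\chi_M + \chi_{\Sym_5} - \chi_{N_1} - \chi_{N_2} = \chi_{\tilde V},\ \chi_{\Altp^2 V},\ \chi_{\tilde W}
\]
in the respective three cases; since all rational irreducibles of $\Sym_5$ are absolutely irreducible, the conclusion of \cref{theo:PrymOfPairsOfCoverings} gives the desired Prym-of-pair identifications.

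For step (iii), the dimension of each component is just the genus of the relevant quotient curve (for the simple Pryms) or is read off from \cref{prop:DimForPrymOfPairs} applied with the degrees \(2, 2\), \(2, 2\) and \(2, 2\), and with the ramifications given in \cref{table:TotalRamificationsS5,eq:deg1,eq:deg2,eq:deg3}; a straightforward arithmetic matches the stated dimensions. For step (iv), I would observe that $f$, $\pi^{\GA}$ and $\pi^{\Alt_5}$ admit no nontrivial intermediate factorization (by \cref{prop:IntermediateOfGalois} applied to the lack of intermediate subgroups in each tower), and that $f$ and $\pi^{\GA}$ are non-Galois (since $\Sym_4$ and $\GA$ are not normal in $\Sym_5$), while $\pi^{\Alt_5}$ is cyclic of degree $2$. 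Hence \cref{prop:CyclicEtaleFactor} forces $\ker f^* = \ker \pi^{\GA *} = 0$ and $|\ker \pi^{\Alt_5 *}| = 2$ when $\pi^{\Alt_5}$ is étale (that is, $n_3 = n_5 = n_6 = 0$) and trivial otherwise; \cref{item:DecompositionInJacAndPrym} of \cref{theo:PrymAndPolarizations} then produces the three listed polarization types.

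The principal obstacle will be the character-arithmetic bookkeeping of step (ii): getting signs right in the Frobenius computation of $\chi_H$ for each of the eight subgroups in the tower, and then assembling the differences $\chi_M + \chi_N - \chi_{N_1} - \chi_{N_2}$ correctly for the three pair-Prym identifications. Once those identities are nailed down, dimensions and polarizations fall out mechanically as in the previous theorems; in particular I would not write out polarization types for $A_4, A_5, A_7$, which are omitted in the statement precisely because the pair-Prym setting does not provide the clean analogue of \cref{theo:PrymAndPolarizations}.
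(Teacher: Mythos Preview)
Your approach is exactly the paper's, and your anticipation that the only real hazard is character bookkeeping is apt---because two such slips have crept in.

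First, $\rho_{\GA} = U \oplus \tilde W$, not $U \oplus W$: over $\GA$ (identity, four $5$-cycles, ten $4$-cycles, five $(2,2)$-cycles) one gets $\langle 1_{\GA}, W|_{\GA}\rangle = \tfrac{1}{20}(5 + 0 - 10 + 5) = 0$ whereas $\langle 1_{\GA}, \tilde W|_{\GA}\rangle = \tfrac{1}{20}(5 + 0 + 10 + 5) = 1$. Consequently $\Prym(\pi^{\GA})$ realizes $A_7$ (the $\tilde W$-piece), and the pair $(\Alt_5,\GA)$ with $M=\Dih_5$ yields $\chi_{\Dih_5}+\chi_{\Sym_5}-\chi_{\Alt_5}-\chi_{\GA}=\chi_W$, i.e.\ $A_6$. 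Your swap does not alter the product in \eqref{eq:S5Decomposition} (both factors carry exponent $5$), but it contradicts the representation labeling asserted in the theorem.

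Second, the degrees you feed into \cref{prop:DimForPrymOfPairs} are not all $(2,2)$: for $(N_1,N_2)=(\Sym_4,\Alt_5)$ with $M=\Alt_4$ one has $(d_1,d_2)=([\Sym_4:\Alt_4],[\Alt_5:\Alt_4])=(2,5)$; for $(\Dih_6,\Sym_4)$ with $M=\Sym_3$, $(d_1,d_2)=(2,4)$; and for $(\Alt_5,\GA)$ with $M=\Dih_5$, $(d_1,d_2)=(6,2)$. With the correct degrees and the ramification data from \cref{table:TotalRamificationsS5} and \cref{eq:deg1,eq:deg2,eq:deg3}, the arithmetic does reproduce the stated dimensions. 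Everything else---the use of \cref{prop:GAComponentAsPrym} for $A_1,A_2,A_3,A_7$, the maximality of $\Sym_4$, $\GA$, $\Alt_5$ in $\Sym_5$ to preclude nontrivial intermediate factorizations, and the invocation of \cref{prop:CyclicEtaleFactor,theo:PrymAndPolarizations} for the three listed polarization types---matches the paper verbatim.
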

	\begin{proof}
		Frobenius reciprocity and \cref{table:ComplexTableS5} show that \(\rho_{\GA} = U \oplus \tilde W\), \(\rho_{\Alt_5} = U \oplus \tilde U\), \(\rho_{\Sym_4} = U \oplus V\), \(\rho_{\Dih_6} = U \oplus V \oplus W\), \(\rho_{\Dih_5} = U \oplus \tilde U \oplus W \oplus \tilde W\), \(\rho_{\Alt_4} = U\oplus \tilde U \oplus V \oplus \tilde V\) and \(\rho_{\Sym_3} = U \oplus 2V \oplus \Altp^2 V \oplus W\).
		Therefore, following the notation of \cref{eq:S5DecompositionForm}, \cref{prop:GAComponentAsPrym} implies that \(A_1 \sim \J(Y)\), \(A_2 \sim \Prym(\pi^{\Alt_5})\), \(A_3 \sim \Prym(f)\) and \(A_7 \sim \Prym\paren{\pi^{\GA}}\).
		Also, using \cref{theo:PrymOfPairsOfCoverings}, we get \(A_4 \sim \Prym\paren{\pi^{\Alt_4}_{\Sym_4}, \pi^{\Alt_4}_{\Alt_5}}\), \(A_5 \sim \Prym\paren{\pi^{\Sym_3}_{\Dih_6}, \pi^{\Sym_3}_{\Sym_4}}\) and \(A_6 \sim \Prym\paren{\pi^{\Dih_5}_{\Alt_5}, \pi^{\Dih_5}_{\GA}}\). The dimension of the several Jacobian and Prym varieties follows from \cref{table:TotalRamificationsS5,eq:deg1,eq:deg2,eq:deg3,prop:DimForPrymOfPairs}.

		According to \cref{table:TotalRamificationsS5}, we have that \(\pi^{\Alt_5}\) is étale if and only if \(n_3 = n_5 = n_6 = 0\); besides, the map \(\pi^{\Alt_5}\) is cyclic (because it is of degree~\(2\)) whereas \(f\) and \(\pi^{\GA}\) are not (neither \(\Sym_4\) nor \(\GA\) are normal in \(\Sym_5\)); also, since neither \(\Sym_4\) nor \(\GA\) are subgroups of \(\Alt_5\), which is the only normal subgroup of \(\Sym_5\), the maps \(f\) and \(\pi^{\GA}\) do not factor by a Galois covering onto \(Y\). Hence, by \cref{prop:CyclicEtaleFactor}, we have that \(\ord{\ker f^*} = 1\), \(\ord{\ker \pi^{\GA*}} = 1\) and
		\begin{equation*}
			\ord{\ker \pi^{\Alt_5*}} = \begin{cases}
				2& \text{if \(n_3 = n_5 = n_6 = 0\),}\\
				1& \text{if \(n_3 + n_5 + n_6 > 0\);}
			\end{cases}
		\end{equation*}
		From \cref{item:DecompositionInJacAndPrym} of \cref{theo:PrymAndPolarizations} and \cref{table:TotalRamificationsS5}, we get the desired polarization types.
	\end{proof}

	\bibliography{References}
\end{document}